\theoremstyle{plain}
\newtheorem{thm}{Theorem}[section]
\newtheorem{theorem}[thm]{Theorem}
\newtheorem{lemma}[thm]{Lemma}
\newtheorem{corollary}[thm]{Corollary}
\newtheorem{proposition}[thm]{Proposition}
\theoremstyle{definition}
\newtheorem{remark}[thm]{Remark}
\newtheorem{definition}[thm]{Definition}
\newtheorem{example}[thm]{Example}
\newtheorem{strategy}[thm]{Strategy}
\numberwithin{equation}{section}
\newcommand{\Aut}{{\rm Aut}}
\newcommand{\Diag}{{\rm diag}}
\newcommand{\Rank}{{\rm rk}}
\newcommand{\GL}{{\rm GL}}
\newcommand{\PGL}{{\rm PGL}}
\newcommand{\PSL}{{\rm PSL}}
\newcommand{\C}{{\mathbb C}}
\renewcommand{\P}{{\mathbb P}}
\begin{document}

	\title{Automorphism groups of cubic fivefolds and fourfolds}
	
	\author{Song Yang, Xun Yu \and Zigang Zhu}
	
	\address{Center for Applied Mathematics and KL-AAGDM, Tianjin University, Weijin Road 92, Tianjin 300072, P.R. China}
	
	\email{syangmath@tju.edu.cn, xunyu@tju.edu.cn, zhzg0313@tju.edu.cn}

	\begin{abstract}
		In this paper, we introduce notions of partitionability and characteristic sets of homogeneous polynomials and give a complete classification of groups faithfully acting on smooth cubic fivefolds.  Specifically, we prove that there exist 20 maximal ones among all such groups. As an application, we classify all possible subgroups of the automorphism groups of smooth cubic fourfolds.
	\end{abstract}
	
	\subjclass[2020]{Primary 14J50; Secondary 14J45, 14J70, 20E99}
	
	\vspace*{-.4in}
	\maketitle
	\setcounter{tocdepth}{1}
    \vspace*{-.4in}
	\tableofcontents
    \vspace*{-.6in}
		
	
	\section{Introduction}\label{Intro}
	In this paper, we study the automorphism groups of smooth cubic hypersurfaces $X$ in the projective space over the complex number field $\C$. Such hypersurfaces are an important class of projective varieties in algebraic geometry. For instance, cubic threefolds are unirational but not rational \cite{CG72}, and the development of topics related to smooth cubic hypersurfaces can be found in \cite{Huy}. The study of their automorphism groups $\Aut(X)$ has a long and rich history, see \cite{Se42}, \cite{Ad78}, \cite{Ho97}, \cite{Ro09}, \cite{GL11}, \cite{Do12}, \cite{Pro12}, \cite{GL13}, \cite{Mo13}, \cite{BCS16}, \cite{Fu16}, \cite{DD19}, \cite{HM19}, \cite{WY20}, \cite{LZ22}, \cite{Zh22}, \cite{AKPW23}, \cite{GLM23}, etc. All possible subgroups of $\Aut(X)$  have been classified for cubic surfaces (see \cite{Se42}, \cite{Ho97}, \cite{Do12}) and for cubic threefolds (\cite{WY20}).  For ${\rm dim}(X)=4$, recently Laza--Zheng \cite{LZ22} classified the symplectic automorphism groups $\Aut^s(X)$ of cubic fourfolds and proved that the Fermat cubic fourfold has the largest possible order for $|\Aut(X)|$. For some partial results on abelian subgroups of automorphism groups of smooth cubic hypersurfaces of arbitrary dimension, see \cite{GL11}, \cite{Zh22}, \cite{GLM23}.  However, a classification of all possible subgroups of $\Aut(X)$ for cubic fourfolds is still unknown and such classifications for dimensions $\ge 5$ are widely open. Our main results of this paper completely solve this problem for cubic fivefolds and fourfolds.	
	
	\begin{theorem}[Theorem \ref{thm:Main}]
		A finite group $G$ can act faithfully on a smooth cubic fivefold if and only if $G$ is isomorphic to a subgroup of one of the following $20$ groups:
		\begin{table}[H]\rm
		\renewcommand\arraystretch{1.1}
				\begin{tabular}{cccccc}
					\hline
					No. &group&order&	No. &group&order\\
						\hline
					1 & $C_3^6 \rtimes S_7$ &           3674160                                                               & 11 & $C_{63} \rtimes C_6$       &                                       378              \\
					2 & $((C_3^2 \rtimes C_3)\rtimes C_4) \times (C_3^3 \rtimes S_4)$ &69984& 12 &$C_3 . M_{10}$       &                       2160           \\
					3 & $C_8 \times (C_3^3 \rtimes S_3)$    &1296                                                & 13 &  $S_7 \times C_3$                      &    15120       \\
					4 & $S_5 \times (C_3^3 \rtimes S_3)$  &                     19440                             & 14 & $C_3 \times ((C_8 \times C_2)\rtimes C_2)$    &    96   \\
					5 & $C_{48} \times S_3$                                                                           & 288& 15 & $C_3 \times ({\rm PSL}(3,2) \rtimes C_2)$&1008 \\
					6 & $\rm{PSL}(2,11) \times (C_3^2 \rtimes C_2)$                                  &11880& 16 & $C_3.A_7$     &7560 \\
					7 & $((C_3^2 \rtimes C_3)\rtimes C_4)^2\rtimes C_2$                       & 23328&17 &   $ C_3 \times \GL(2,3)$ &  144 \\
					8 & $((C_3^2 \rtimes C_3)\rtimes C_4) \times C_8$                             &864& 18&  $((C_3^2\rtimes  C_3)\rtimes Q_8)\rtimes C_3 $               & 648\\
					9 & $S_5 \times ((C_3^2 \rtimes C_3)\rtimes C_4)$                              & 12960& 19 & $C_{64}$ &   64 \\
					10 & $C_{96}$  &96& 20 & $C_{43} \rtimes C_7$                                      &  301\\           
						\hline
				\end{tabular}

		\end{table}
	\end{theorem}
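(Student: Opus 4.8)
The plan is to recast the group-theoretic problem as one in representation theory and invariant theory, and then run it through the combinatorial machinery of characteristic sets and partitionability developed earlier in the paper.

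\emph{Reduction to linear data.} A smooth cubic fivefold is $X = V(F) \subset \P^6$ for a cubic form $F$ on $V = \C^7$. By the Matsumura--Monsky theorem, $\Aut(X) = \{\,g \in \PGL_7(\C) : g(X) = X\,\}$, so a finite group $G$ acts faithfully on some $X$ precisely when it is realized as a finite subgroup of $\PGL_7(\C)$ fixing $[F] \in \P(\mathrm{Sym}^3 V^*)$. Since $G$ is finite I would lift it to a finite subgroup $\widetilde G \subset \GL_7(\C)$ (a central extension of $G$), making $V$ a $7$-dimensional $\widetilde G$-module on which $F$ is semi-invariant, $g \cdot F = \chi(g)\,F$ for a character $\chi$. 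The task is thus to classify, up to $\GL_7(\C)$-conjugacy, the modules $V$ carrying a \emph{smooth} semi-invariant cubic, and to compute the full linear stabilizer of each such $F$; the extension structure visible in the table (e.g. the factors $C_3.M_{10}$ and $C_3.A_7$) is precisely what these lifts record.

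\emph{Constraining $F$.} The heart of the argument is to pin down $F$ using smoothness together with the action. Restricting to a cyclic subgroup and diagonalizing, every monomial of $F$ is a common eigenvector, and the Jacobian smoothness criterion turns into a combinatorial condition: the monomials occurring in $F$ must cover every coordinate, as recorded by the characteristic set, and this covering must be compatible with a partition of the coordinates --- precisely the partitionability property. Feeding these constraints into the representation theory of finite subgroups of $\GL_7(\C)$, I would show that a smooth semi-invariant $F$ can be brought, after a change of coordinates, into one of finitely many normal forms, each a sum of blocks (Fermat cubes and low-dimensional smooth cubics in standard form) on disjoint sets of variables that the group permutes and scales.

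\emph{Enumeration and maximality.} For each normal form I would compute the full group of projective linear symmetries of $F$; these finitely many groups are the candidates for the maximal ones. I would then establish sufficiency by exhibiting, for each of the $20$ listed groups, an explicit smooth invariant $F$, and establish completeness and maximality by ordering all candidate stabilizers under inclusion and discarding the non-maximal ones, leaving exactly the $20$ entries. Identifying the abstract isomorphism types and confirming the orders would be carried out with computer algebra. The principal difficulty is completeness of the case analysis: guaranteeing that the combinatorial sweep over all admissible $\widetilde G$-module structures and normal forms of $F$ misses no group. This demands a tight interaction between the partitionability constraints and the classification of finite subgroups acting in dimension $7$, so that only finitely many module types survive and each is resolved; the precise computation of the stabilizers and the verification of maximality and pairwise non-redundancy of the $20$ groups form a second, computationally heavy layer that I expect to rely substantially on explicit machine calculation.
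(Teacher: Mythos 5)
Your outline reproduces the paper's general philosophy (lift to $\GL_7(\C)$ via Matsumura--Monsky, exploit smoothness combinatorially after diagonalizing cyclic subgroups, use partitionability to split off blocks, finish by machine), but it contains a structural claim that is false and that the actual proof is organized precisely to avoid. You assert that a smooth (semi-)invariant cubic can be brought into ``one of finitely many normal forms, each a sum of blocks (Fermat cubes and low-dimensional smooth cubics in standard form) on disjoint sets of variables that the group permutes and scales.'' No such finite list of normal forms exists: even for a fixed partition type the invariant cubics move in positive-dimensional families, and --- more importantly --- the hardest maximal groups in the table ($C_3.A_7$, $C_3.M_{10}$, $C_3\times(\PSL(3,2)\rtimes C_2)$, $C_3\times\GL(2,3)$, the order-$648$ group) preserve cubics whose essential part is \emph{unpartitionable}: the relevant $6$- or $7$-dimensional representations are irreducible or nearly so, and the invariant cubic is not a sum of standard low-dimensional pieces. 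The paper's characteristic sets and the propositions on $(1,3,3)$-, $(3,4)$-, and $(k,m-k)$-type partitions only dispose of the partitionable case, where everything reduces to the known $(2,3)$- and $(3,3)$-classifications; they say nothing about the unpartitionable case, and your sketch offers no replacement. (Incidentally, the extension structure $C_3.M_{10}$, $C_3.A_7$ in the table is not ``what the lifts record'': since $\gcd(3,7)=1$ the automorphism group of a cubic fivefold lifts isomorphically to $\GL_7(\C)$ with $F$ genuinely invariant; those non-split extensions are the projective automorphism groups themselves, arising as $C_3$-coverings of fourfold automorphism groups.)

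The second, related gap is that you identify completeness of the case analysis as ``the principal difficulty'' but supply no mechanism for it. The paper's engine is: (i) a classification of the possible prime-power orders of elements and of Sylow $p$-subgroups, combined with a list of $19$ forbidden abelian subgroups, which yields the bound $|\Aut(X_F)|\le 90720$ whenever $F$ has no $(2,5)$- or $(3,4)$-type partition (Proposition \ref{prop:boundorder}); and (ii) an induction on the group order $m$ in which a candidate group survives only if all its proper subgroups are already known to be $(5,3)$-groups (the sub-test), after which the finitely many survivors are killed by computing their special almost $(5,3)$-representations and checking the invariant cubics against explicit non-smoothness criteria. In other words, the paper enumerates \emph{groups}, not normal forms of cubics, and only ever computes invariant cubics for the handful of groups that survive the order bound and the sub-test. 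Without the order bound and the induction your ``combinatorial sweep'' does not terminate, and without abandoning the block-normal-form picture it cannot reach the unpartitionable examples at all.
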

		
	\begin{theorem}[Theorem \ref{thm:fourfold}]
		A finite group $G$ can act faithfully on a smooth cubic fourfold if and only if $G$ is isomorphic to a subgroup of one of the following $15$ groups:
		\begin{table}[H]\rm
		\renewcommand\arraystretch{1.1}
	\begin{tabular}{cccccc}
		\hline
		No. &group&order&	No. &group&order\\
		\hline
					1 & $C_3^5 \rtimes S_6$                              &174960                                                                & 9 & $C_{21} \rtimes C_6$                      &126                      \\
					2 & $((C_3 \times (C_3^3\rtimes C_3))\rtimes C_3) \rtimes (C_4 \times C_2)$  &5832  & 10 & $M_{10}$                    &720   \\
					3 & $C_8 \times (C_3^2 \rtimes C_2)$          &144                                                             & 11 & $S_7$                                 &5040  \\
					4 & $S_5 \times (C_3^2 \rtimes C_2)$                                            &2160                            & 12 & $(C_8 \times C_2)\rtimes C_2$                                         &32      \\
					5 & $C_{48}$            &48                                                                                                      & 13 &  ${\rm PSL}(3,2) \rtimes C_2$&336  \\
					6 & $\rm{PSL}(2,11) \times C_3$                                     &1980                                             & 14 & $\GL(2,3)$      &48   \\
					7 & $((C_3 \times(C_3^2\rtimes C_3))\rtimes C_3)\rtimes (C_4^2 \rtimes C_2)$&7776& 15 &     $(C_3^2\rtimes Q_8)\rtimes C_3 $                                               &216           \\
					8 & $C_{32}$& 32&& &                                                               \\
					\hline                  
				\end{tabular}

		\end{table}
	\end{theorem}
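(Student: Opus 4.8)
The plan is to deduce this classification from the cubic fivefold classification (Theorem \ref{thm:Main}) through the cyclic triple-cover construction. First I would invoke the linearization principle for smooth cubics of dimension at least two: every automorphism of a smooth cubic fourfold $X=V(F)\subset\P^5$ is the restriction of an element of $\PGL_6$, and each such automorphism lifts to a linear transformation of $\C^6=\langle x_0,\dots,x_5\rangle$ preserving $F$ up to a scalar. This recasts the problem as a question in the linear representation theory and invariant theory of the cubic form $F$, to which the partitionability and characteristic-set machinery applies.

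The key correspondence I would set up is the following. Given $G$ acting faithfully on $X=V(F)$, I lift to $\widetilde G\subset\GL_6$ and form the cubic fivefold $Y=V(F+x_6^3)\subset\P^6$; here $Y$ is smooth precisely because $X$ is, since $\partial(F+x_6^3)/\partial x_6=3x_6^2$ forces any singular point into $\{x_6=0\}$ and to be singular on $X$. Extending $\widetilde G$ by $x_6\mapsto x_6$ and adjoining the order-three automorphism $\sigma\colon x_6\mapsto\zeta_3 x_6$ produces a faithful action of $G\times\langle\sigma\rangle$ on $Y$, with $\sigma$ central and of \emph{cyclic-cover type}, i.e. represented in $\PGL_7$ by an element with eigenvalues $(1^6,\zeta_3)$ (a fixed hyperplane $\{x_6=0\}$ and one isolated fixed point). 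Conversely, any automorphism of $Y$ commuting with such a $\sigma$ must preserve its eigenspaces, and by a dimension count cannot interchange the $6$-dimensional and $1$-dimensional ones, so it stabilizes $\{x_6=0\}$ and the point $[0:\cdots:0:1]$ and descends to an automorphism of the branch cubic fourfold $X$. Moreover every cyclic-cover-type $\sigma$ on a smooth cubic fivefold can be diagonalized so that the defining form is forced into the shape $F+x_6^3$. This yields the identity $C_{\Aut(Y)}(\sigma)\cong\Aut(X)\times\langle\sigma\rangle$ and, more generally, a bijection between groups acting on smooth cubic fourfolds and pairs $(Y,\sigma)$ consisting of a smooth cubic fivefold with a central cyclic-cover element.

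With this in hand I would run the forward direction: if $G$ acts faithfully on a smooth cubic fourfold then $G\times C_3\le\Aut(Y)$, hence by Theorem \ref{thm:Main} embeds into one of the $20$ maximal groups $H_i$ with the $C_3$-factor realized by a cyclic-cover element $\sigma$. For each $H_i$ I would enumerate the conjugacy classes of order-three elements of cyclic-cover type, compute the centralizer $C_{H_i}(\sigma)$, and read off the largest admissible $G$ as a complement to $\langle\sigma\rangle$ in $C_{H_i}(\sigma)$, that is $C_{H_i}(\sigma)/\langle\sigma\rangle$ when the extension splits. This produces the candidate maximal fourfold groups; for instance the Fermat group $H_1=C_3^6\rtimes S_7$ with $\sigma$ a coordinate scaling yields $C_3^5\rtimes S_6$ (No. 1), while $H_{16}=C_3.A_7$ yields only $A_7\le S_7$ and so contributes no new maximal group, which helps account for the drop from $20$ to $15$.

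The hard part will be twofold: the descent bookkeeping together with the realizability and maximality statements. On the descent side one must correctly identify which order-three elements of each $H_i$ are genuinely of cyclic-cover type as elements of $\PGL_7$ acting on the specific fivefolds, not merely abstractly of order three, and then verify the splitting of $1\to\langle\sigma\rangle\to C_{H_i}(\sigma)\to G\to 1$; this is exactly where the structural analysis of the defining forms is indispensable. On the converse side, for each of the $15$ groups I would exhibit an explicit smooth cubic fourfold realizing the action—taking the branch fourfold of the fivefold realizing $G\times C_3$—and then prove that no strictly larger group acts, so that the list is precisely the maximal ones. This final maximality check, ruling out accidental enlargements of $\Aut(X)$, is the most delicate point, and it again leans on Theorem \ref{thm:Main} to bound the ambient possibilities.
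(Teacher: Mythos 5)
Your overall plan---passing from a cubic fourfold $X_F$ to the fivefold $X_{F+x_6^3}$ and reading off $(4,3)$-groups from the $(5,3)$-classification---is indeed the paper's route, but your key correspondence contains a genuine error. You assert that a faithful $G$-action on $X_F$ yields a faithful action of $G\times\langle\sigma\rangle$ on $Y=X_{F+x_6^3}$, and that $C_{\Aut(Y)}(\sigma)\cong\Aut(X_F)\times\langle\sigma\rangle$. This presupposes that $G$ admits an $F$-lifting to $\GL(6,\C)$, which by Theorem \ref{thm:F-liftable} fails in general for cubic fourfolds since $\gcd(6,3)=3\neq 1$. What is true (Lemma \ref{lem:exactseq}) is only that the preimage of $G$ in $G_F$ is a central extension $C_3.G$, possibly non-split; correspondingly the block-diagonal part of $\Aut(Y)$ centralizing $\sigma$ is isomorphic to $G_F\cong C_3.\Aut(X_F)$, not to $\Aut(X_F)\times C_3$. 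This is not a technicality one can verify away: $M_{10}$ acts faithfully on a smooth cubic fourfold (No.\ 10 in the list), yet $M_{10}\times C_3$ is \emph{not} a $(5,3)$-group (it embeds in none of the $20$ maximal groups, since $M_{10}$ has no nontrivial normal $3$-subgroup and does not embed in $S_7$); the group acting on the associated fivefold is the non-split extension $C_3.M_{10}=G_{X_{12}}$. Your forward direction would therefore wrongly exclude $M_{10}$, and likewise misidentify the $A_7$ fourfold $X_{F_{A_7}}$, whose fivefold cover has automorphism group $C_3.A_7$ (see Remark \ref{rem:A7M10}). The paper's notion of a \emph{$C_3$-covering group} (Definition \ref{def:Cdcovering}) and Lemma \ref{lem:(4,3)and(5,3)} exist precisely to replace your split product with the correct (possibly non-split) central extension.

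A second, smaller gap: your enumeration runs over order-three elements ``of cyclic-cover type'' in the $20$ maximal groups \emph{acting on the specific fivefolds $X_i$}. But the fivefold $Y$ realizing a given pair $(\widehat G,\sigma)$ need not be any of the $X_i$; Theorem \ref{thm:Main} only guarantees an abstract embedding $\widehat G\hookrightarrow G_{X_i}$, under which $\sigma$ need not correspond to an eigenvalue pattern $(1^6,\xi_3)$ in the representation on $X_i$. A purely group-theoretic upper bound via centralizers of arbitrary order-three elements could be salvaged, but deciding which candidates are actually realized still requires the representation-theoretic and invariant-theoretic computations (special $(5,3)$-representations containing $\Diag(\xi_3,\dots,\xi_3,1)$, smoothness tests) that constitute Theorem \ref{thm:ruleout43} and Strategy \ref{strategy43}; your proposal defers exactly this to the ``hard part'' without a mechanism for carrying it out.
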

	
	Explicit examples of cubic fivefolds and fourfolds acted on by these maximal groups are given in the examples in Subsections \ref{mainex} and \ref{ex:4folds} respectively.
	
	Next, we briefly explain the idea of the proof of the main results. Let $(n,d)$ be a pair of integers satisfying $n\geq 2$, $d\geq 3$, and $(n,d)\neq(2,4)$. We say a finite group $G$ is an {\it $(n,d)$-group} if $G$ is isomorphic to a subgroup of the automorphism group of a smooth hypersurface in $\P^{n+1}$ of degree $d$. Matsumura--Monsky \cite{MM63} proved that for a smooth hypersurface $X_{n,d}\subset\mathbb{P}^{n+1}$ of degree $d$, its automorphism group $\Aut(X_{n,d})$ is a finite group, and
	\begin{center}
		$\Aut(X_{n,d})=\{\phi\in{\rm PGL}(n+2,\mathbb{C})~|~\phi(X_{n,d})=X_{n,d}\}.$
	\end{center}
	Two smooth hypersurfaces of dimension $n$ and degree $d$ are isomorphic if and only if they are projectively equivalent, that is, their defining equations are the same up to linear change of coordinates. Therefore, classifying all $(n,d)$-groups is equivalent to classifying all finite subgroups of $\PGL(n+2,\C)$ preserving smooth homogeneous polynomials of degree $d$. By solving the latter problem in the classical invariant theory, Oguiso--Yu \cite{OY19} classified all groups acting faithfully on smooth quintic threefolds, which meanwhile gives a systematic (and computer-aided) method for classifying all possible $(n,d)$-groups for prescribed integers $n$ and $d$. Based on this method, Wei--Yu \cite{WY20} completed the classification of all $(3,3)$-groups. In this paper, we follow the approach of Oguiso--Yu's work to study the automorphism groups of cubic fivefolds. However, the dimensions of target hypersurfaces in this paper are higher and  groups of automorphisms in question are more complex. In order to overcome such difficulties, among other things, we introduce two new notions, {\it partitionability} (Definition \ref{def:parti}) and {\it characteristic sets} (Definition \ref{def:SrF}) of homogeneous polynomials. Partitionability and characteristic sets are crucial for our classification of $(5,3)$-groups since they not only significantly simplify the classification procedures conceptually but also considerably reduce the amount of calculations to rule out relevant groups. In fact, using characteristic sets, we are able to control the automorphism groups of cubic fivefolds $X_F$ defined by cubic polynomials $F$ of the form 
	\begin{equation}\label{F=H+K}
	F=H(x_1,...,x_a)+K(x_{a+1},...,x_{7}), \, 2\le a\le 3
	\end{equation}
	(Propositions \ref{prop:unp+fem}, \ref{prop:1+3+3}, \ref{prop:4+3}), which immediately gives all possible subgroups of $\Aut(X_F)$ from previously known classifications of $(2,3)$-groups and $(3,3)$-groups (Proposition \ref{thm:relation}, Theorems \ref{thm:autex}, \ref{thm:5+2}). On the other hand, for cubic fivefolds with defining polynomials not of the form (\ref{F=H+K}), their automorphism groups are bounded by $90720$ (Proposition \ref{prop:boundorder}) and can be effectively handled by Strategy \ref{strategy} which heavily relies on partitionability. In this way, we complete the classification of $(5,3)$-groups. Note that the proofs of all results in Section \ref{ss:parti} are free from computer algebra, but such results fit well with computer algebra (see Corollary \ref{cor:partitionbyA}, Theorem \ref{thm:control}, Strategy \ref{strategy}).  We believe that the  results on partitionability and characteristic sets are interesting in and of themselves and will be applicable to other problems.
	
	Unlike cubic fivefolds, the automorphism groups of cubic fourfolds have no $F$-lifting in general (see Theorem \ref{thm:F-liftable} and Remark \ref{rem:A7M10}), which is a key obstruction in our classification of such groups. To deal with this issue, we introduce the notion of {\it $C_d$-covering group} (Definition \ref{def:Cdcovering}). A finite group is a $(4,3)$-group only if it has a $C_3$-covering $(5,3)$-group (see Lemma \ref{lem:(4,3)and(5,3)}), which gives strong constraints on $(4,3)$-groups. Moreover, partitionability of defining polynomials of cubic fourfolds and fivefolds are closely related (see Lemma \ref{lem:widehatF}). Based on these relations, we quickly classify all $(4,3)$-groups by taking advantages of our results and strategies for $(5,3)$-groups (see Theorem \ref{thm:ruleout43} and Strategy \ref{strategy43}). As a by-product, we obtain explicit defining polynomials $F_{A_7}$ and $F_{M_{10}}$ of two cubic fourfolds with maximal symplectic automorphism groups $A_7$ and $M_{10}$ respectively (Theorems \ref{thm:A7} and \ref{thm:M10}). To the best of our knowledge, defining equations of the two cubic fourfolds are previously unknown (see \cite[Page 1461]{LZ22}).
	
	We conclude the introduction by explaining in detail which results rely to what extent on computer calculations. All results in Sections \ref{ss:parti} and \ref{ss:examples} are free from computer calculations except that for $14\le i\le 18$, we use computer algebra (Mathematica \cite{Wo}, Magma \cite{BCP}) to verify smoothness of $X_i$ and the inclusions $G_{X_i}\subseteq {\rm Aut}(X_i)$ in Subsection \ref{mainex}. In Sections \ref{sec:5} and \ref{ss:fourfolds}, the results which rely on computer calculations are Theorems \ref{thm:Main}, \ref{thm:control}, \ref{thm:abelian}, \ref{thm:sylow23}, \ref{thm:solvable}, \ref{thm:fourfold}, \ref{thm:ruleout43}, \ref{thm:A7}, \ref{thm:M10}, Remark \ref{rem:Aut=G}, Lemma \ref{lem:ab53}, and Proposition \ref{prop:boundorder}. In fact, we prove Theorems \ref{thm:control}, \ref{thm:abelian}, \ref{thm:ruleout43} using classification of $(5,3)$-representations (see Definition \ref{def:(n,d)-rep}) of relevant abelian groups. Such representations can be computed by hand in principle (see e.g., \cite[Theorem 5.4]{WY20} and Example \ref{ex:C2}), but we use computer algebra (Mathematica) for efficiency. For Remark \ref{rem:Aut=G}, Lemma \ref{lem:ab53}, we use computer algebra (GAP \cite{GAP}) to compute all subgroups of $G_{X_i}$ ($1\le i\le 20$). Note that smoothness of the examples in Subsection \ref{ex:4folds} and the structure description of their automorphism groups follows from Subsetion \ref{mainex} and Remark \ref{rem:Aut=G}. Our proofs of Theorems \ref{thm:Main},  \ref{thm:sylow23}, \ref{thm:solvable}, \ref{thm:M10} heavily rely on computer calculations using Strategy \ref{strategy}. More precisely, we use GAP to do the sub-test (see Remark \ref{rem:subtest}) in the \textbf{Step 1} of Strategy \ref{strategy}; we use a mixture of GAP, Mathematica and Sage \cite{Sage} to compute special almost $(5,3)$-representations (see Definitions \ref{def:(n,d)-rep} and \ref{def:special}) and invariant cubic forms in the \textbf{Steps 2} and \textbf{3} of Strategy \ref{strategy}. Similarly, our proof of Theorem \ref{thm:fourfold} heavily relies on computer calculations using Strategy \ref{strategy43}. Moreover, Proposition \ref{prop:boundorder} (resp. Theorem \ref{thm:A7}) is free from computer calculations modulo Theorems \ref{thm:control} and \ref{thm:sylow23} (resp. Theorem \ref{thm:Main} and Remark \ref{rem:Aut=G}). The computer codes and outputs needed in Sections \ref{ss:examples}-\ref{ss:fourfolds} are contained in the ancillary files to \cite{YYZ23} whose roles are described in Appendix \ref{Appendix}. These files are explicitly mentioned in the relevant proofs and can be obtained at \href{https://arxiv.org/src/2308.07186/anc}{https://arxiv.org/src/2308.07186/anc}.
		
		\subsection*{Acknowledgement} We would like to thank Professor Jun-Muk Hwang for pointing out the notion of Thom--Sebastiani polynomials in Remark \ref{rem:TS} and Professor Keiji Oguiso for helpful discussions on automorphisms of hyperk\"{a}hler manifolds of $K3^{[2]}$-type. 
		 We would also like to express our thanks to the editors and referees for their valuable comments and suggestions.
		This work is partially supported by the National Natural Science Foundation of China (No. 12171351, No. 12071337, No. 11831013,  No. 11921001).

	\section{Notation}
	
	(2.1)  Let $F=F(x_1,...,x_n)$ be a homogeneous polynomial of degree $d$. For $A=(a_{ij})\in \GL(n,\mathbb{C})$, we denote by $A(F)$ the homogeneous polynomial
 $$
  F(\sum_{i=1}^{n}a_{1i}x_i,\cdots,\sum_{i=1}^{n}a_{ni}x_i).
 $$
Then we have $A(F)(x_1,\dots,x_n)=F((x_1,\dots,x_n)A^T)$.
 Note that $(AB)(F)=B(A(F))$ for any $A,B\in \GL(n,\C).$ We define $$G_F:=\{ A\in {\rm GL}(n,{\mathbb C})~|~A(F)=F \}.$$

\medskip

 (2.2) We denote by $X_F\subset \mathbb{P}^{n-1}$ the  hypersurface defined by $F$. If $X_F$ is smooth, we say $F$ is a smooth form of degree $d$. We define $\widehat{F}:=F+x_{n+1}^d$. We denote by $X_i$ (resp. $G_{X_i}$), $i\in\{1, 2,\dots,20\}$, the $20$ smooth cubic fivefolds (resp. finite groups) in the examples in Subsection \ref{mainex}. We denote by $X_i'$, $i\in\{1, 2, \dots,15\}$, the $15$ smooth cubic fourfolds in the examples in Subsection \ref{ex:4folds}. We use $\pi: \GL(n,\mathbb{C})\rightarrow \PGL(n,\mathbb{C})$ to denote the natural quotient map, and for $A\in \GL(n,\mathbb{C})$, we denote $\pi(A)$ by $[A]$. 

\medskip

       (2.3) We say a finite group $G$ is an $(n,d)$-group if $G$ is isomorphic to a subgroup of the automorphism group ${\rm Aut}(X)$ of a smooth hypersurface $X\subset \P^{n+1}$ of degree $d$. Let $p$ be a prime number. If no confusion causes, we use $G_p$ to denote a Sylow $p$-subgroup of $G$. We use $N\rtimes H$ to denote one of the semidirect products of groups $N$, $H$. We use $N.H$ to denote a finite group which fits in a (non-split) short exact sequence of finite groups \begin{equation*}
  \xymatrix@C=0.5cm{ 
  1 \ar[r] & N 
  \ar[r]^{} & N.H
  \ar[r]^{} & H 
  \ar[r] & 1.} 
\end{equation*} Some symbols frequently used in this paper are as follows:
\begin{flushleft}
\begin{tabular}{rl} 
 $\xi_k$ &  the $k$-th primitive root $e^{\frac{2\pi i}{k}}$ of unity, where $k$ is a positive integer; \\
 
 $I_n$ & the identity matrix of rank $n$; \\
 
 $C_n$ & the cyclic group of order $n$;\\
 
 $D_{2n}$ & the dihedral group of order $2n$;\\
 
 $S_n$ & the symmetric group of degree $n$;\\
 
 $A_n$ & the alternating group of degree $n$;\\
 
 $Q_n$ & the quaternion group of order $n$;\\
 
 $\PSL(n,q)$ & the projective special linear group of degree $n$ over the field $\mathbb{F}_q$ with $q$ elements;\\
 
  $\GL(2,3)$ & the general linear group of degree $2$ over the field $\mathbb{F}_3$;\\
 
   $M_{10}$ & the Mathieu group of order $720$;\\
   
   $QD_{16}$ & the quasidihedral group of order $16$;\\

   ${\rm PSU}(3,3)$ & the projective special unitary group of degree $3$ for the quadratic extension field $\mathbb{F}_{9}$\\
   & over the field $\mathbb{F}_3$.\\
 
\end{tabular} 
\end{flushleft}

	\section{Partitionability and characteristic sets}\label{ss:parti}
	In this section, we introduce two notions, partitionability and characteristic sets. We use characteristic sets to control symmetries of homogeneous polynomials $F$ having partitions of certain types (Propositions \ref{prop:unp+fem}, \ref{prop:1+3+3}, \ref{prop:4+3}) and we give a relation among partitionability, $F$-liftability, and automorphism groups of hypersurfaces (Proposition \ref{thm:relation}). These results are very important for our classification of $(5,3)$-groups in later sections.
	
	\begin{definition}\label{def:parti}
		Let $F=F(x_1,x_2,\dots,x_{m})$ be a homogeneous polynomial of degree $d$. If there exists an invertible matrix $A\in{\rm GL}(m,\mathbb{C})$ and positive integers $a_1,\dots,a_t$ such that 
		$$A(F)=H_1(x_1,\dots,x_{a_1})+H_2(x_{a_1+1},\dots,x_{a_1+a_2})+\cdots+ H_t(x_{a_1+a_2+\cdots+a_{t-1}+1},\dots,x_{a_1+a_2+\cdots+a_t}),$$
		where $a_1+\cdots+a_t\leq m$ and $t\geq2$, then we say $F$ is {\it partitionable} or $F$ has an $(a_1,a_2,\dots,a_t)$-{\it type partition}. In this case, we say $F$ can be partitioned as $H_1+\cdots+H_t$. Otherwise, we say $F$ is {\it unpartitionable}. 
		
		If $F$ has an $(a_1,a_2,\dots,a_t)$-{\it type partition} given by $A(F)=H_1+\cdots+H_t$ and all $H_i$ ($i=1,...,t$) are unpartitionable, we say $F$ has a {\it maximal} $(a_1,a_2,\dots,a_t)$-type partition. 
	\end{definition}
	
	\begin{remark}\label{rem:TS}
	In the literature, polynomials of the form $H_1(x_1,...,x_{a_1})+H_2(x_{a_1 +1},...,x_{a_1 +a_2})$ are also called Thom--Sebastiani (type) polynomials. If $H_i: (\mathbb{C}^{a_i},0)\rightarrow (\mathbb{C},0)$ are two germs of holomorphic functions with isolated critical points, then Thom--Sebastiani theorem (\cite{ST71}) asserts that the vanishing cycles complex of $H_1+H_2$ is isomorphic to the tensor product of those of $H_1$ and $H_2$.
	\end{remark}
	
	\begin{example}
	Let $F=x_1^3+x_2^2x_3+x_3^2 x_2$. Then $F$ has a $(1,2)$-type partition given by $F=H_1+H_2$, where $H_1=x_1^3$ and $H_2=x_2^2x_3+x_3^2 x_2$. This partition is not maximal. In fact, $H_2$ has a $(1,1)$-type partition since $A(H_2)=x_2^3+x_3^3$, where $A=\begin{pmatrix}-1&-1\\ \frac{1-\sqrt{3} i}{2}& \frac{1+\sqrt{3} i}{2}\end{pmatrix}$. 
	\end{example}
	
	More generally, for partitions of cubic forms, we have the following result. 
	\begin{lemma}\label{lem:2n3n}
		Consider a smooth cubic form $F=F(x_1,\dots,x_{m})$ with $m\geq4$. If $F$ has a $(2,m-2)$-type partition, then $F$ has a $(1,1,m-2)$-type partition. If $F$ has a $(3,m-3)$-type partition, then either $F$ has a $(1,1,1,m-3)$-type partition or there exists $A\in\GL(m,\C)$ such that  $A(F)=x_1^3+x_2^3+x_3^3+\lambda x_1x_2x_3+H(x_4,\dots,x_m)$, where $\lambda\neq0$ $($see \cite[Section 3.1.2]{Do12}$)$.
	\end{lemma}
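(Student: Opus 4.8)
The plan is to classify the possible partitions of a smooth cubic form by analyzing, in each case, the univariate and low-variable pieces that appear in a partition. The key observation is that a one-variable cubic summand $H_i(x_j)$ is necessarily of the form $c x_j^3$ (up to scaling $c\neq 0$), and after rescaling the coordinate we may assume it is exactly $x_j^3$; thus any $(1,\dots)$-type summand already contributes a perfect cube. So the whole argument reduces to understanding what a single unpartitionable cubic in two or three variables can look like, and then peeling off cubes.

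For the $(2,m-2)$ case, I would start from $A(F)=H_1(x_1,x_2)+H(x_3,\dots,x_m)$ and focus on the binary cubic $H_1(x_1,x_2)$. A binary cubic form over $\C$ factors into linear factors, and I would argue that if $F$ is to be smooth then $H_1$ cannot have a repeated linear factor (a repeated factor would force a singular point of $X_F$ on the corresponding coordinate subspace, since the partition decouples the two sets of variables and the Jacobian criterion then detects the singularity coming from the $H_1$-block). Hence $H_1$ has three distinct roots, and a binary cubic with distinct roots is diagonalizable: there is $B\in\GL(2,\C)$ with $B(H_1)=x_1^3+x_2^3$ (one can send the three roots to the three cube roots of a suitable constant, or invoke that such a form is equivalent to $x_1^3+x_2^3$). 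Applying $B$ to the first block and leaving $H$ fixed yields a $(1,1,m-2)$-type partition. The only thing to verify carefully is the smoothness-to-distinct-roots implication, which is the main (but routine) technical point in this case.

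For the $(3,m-3)$ case I would run the parallel strategy on the ternary cubic $H_1(x_1,x_2,x_3)$: after extracting the block decomposition $A(F)=H_1(x_1,x_2,x_3)+H(x_4,\dots,x_m)$, I invoke the classical normal-form theory for plane cubic curves. Smoothness of $F$ forces $X_{H_1}\subset\P^2$ to be either smooth or, at worst, to have the kind of singularity compatible with the global smoothness of $X_F$; the cited classification in \cite[Section 3.1.2]{Do12} says that any ternary cubic is $\GL(3,\C)$-equivalent either to a totally decomposable form $x_1^3+x_2^3+x_3^3$ (the Fermat/diagonal type, possibly after a linear change absorbing the $\lambda=0$ Hesse-pencil member), which is itself $(1,1,1)$-partitionable and hence gives the $(1,1,1,m-3)$-type partition, or to a Hesse normal form $x_1^3+x_2^3+x_3^3+\lambda x_1x_2x_3$ with $\lambda\neq 0$, which is genuinely unpartitionable in three variables. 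Transporting this normal form into the first block of $A(F)$ produces exactly the stated dichotomy.

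The hard part will be pinning down the smoothness constraints precisely enough to exclude the degenerate ternary normal forms (those with multiple components or non-nodal singularities) without circular reasoning: I must show that any unpartitionable $H_1$ surviving in a partition of a \emph{smooth} $F$ is forced into the Hesse form with $\lambda\neq 0$, rather than into a cuspidal or reducible cubic. The cleanest route is to note that the partition makes $X_F$ a join-type configuration of $X_{H_1}$ and $X_H$, so a singularity of $X_{H_1}$ propagates to a singularity of $X_F$; combined with the Hesse-pencil classification of \cite{Do12}, this rules out all the bad cubics and leaves precisely the two listed outcomes. Once that reduction is in place, the construction of the explicit $A\in\GL(m,\C)$ is a routine block-diagonal assembly of the per-block normalizing matrices.
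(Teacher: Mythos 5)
Your proposal is correct and is essentially the argument the paper intends: the paper gives no written proof of this lemma, only the citation to Dolgachev for the classification of plane cubics, and your reconstruction (a singular point of a block $H_1$ propagates to a singular point of $X_F$ because the remaining partials are quadrics vanishing on that coordinate subspace, hence the binary cubic has distinct roots and is equivalent to $x_1^3+x_2^3$, and the ternary cubic is a smooth plane cubic in Hesse normal form) is the standard and intended one. The only hedge worth removing is the phrase about $X_{H_1}$ possibly having a singularity ``compatible with'' smoothness of $X_F$ --- as your final paragraph correctly notes, no singularity of $X_{H_1}$ is compatible, so $X_{H_1}$ is smooth outright and the Hesse pencil with $\lambda^3\neq -27$ is all that survives.
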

	
		Let $F(x_1,\dots,x_m)$ be a homogeneous polynomial of degree $d$. For $1\leq i\leq d$, we define the natural $i$-th order differential mapping induced by $F$ as follows:
	$$D_i^F:\mathcal{D}_i(x_1,\dots,x_m)\longrightarrow\C[x_1,\dots,x_m],$$
	where $\mathcal{D}_i(x_1,\dots,x_m)$ denotes the vector space of $i$-th differential operators. 
	For example, $D_1^F(\frac{\partial }{\partial x_i})=\frac{\partial F}{\partial x_i}$, $D_2^F(\frac{\partial^2 }{\partial x_i \partial x_j})=\frac{\partial^2 F}{\partial x_i \partial x_j}$. Let $\mathfrak{m}=\mathfrak{m}(x_1,\dots,x_{m})$ be a monomial of degree $d$. 
	Then we say $\mathfrak{m}$ is in $F$ (or $\mathfrak{m}\in F$) if the coefficient of $\mathfrak{m}$ is not zero in the expression of $F$. Oguiso--Yu \cite{OY19} introduced the differential method to classify the automorphism groups of smooth quintic threefolds. In order to study partitionability of polynomials and their symmetries, we recall the differential method.

	\begin{thm}[{\cite[Theorem 3.5]{OY19}}]\label{thm:dm}
		Let $F(x_1,\dots,x_m)$, $G(y_1,\dots,y_m)$ be nonzero homogeneous polynomials of degree $d$. Suppose that there exists an invertible matrix $A=(a_{ij})_{1\leq i,j\leq m}$, such that $F(x_1,\dots,x_m)=$ $G(\sum_{i=1}^{m}a_{1i}x_i,\dots,\sum_{i=1}^{m}a_{mi}x_i)$, then ${\rm rk}(D_i^F)={\rm rk}(D_i^G)$, for all $1\leq i\leq d$.
	\end{thm}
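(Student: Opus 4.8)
The plan is to exhibit $D_i^F$ as the composite of $D_i^G$ with two explicit linear \emph{isomorphisms}, one acting on the source space $\mathcal{D}_i$ and one on the target polynomial space, both induced by the invertible matrix $A$. Since pre- and post-composing a linear map with isomorphisms leaves its rank unchanged, the equality ${\rm rk}(D_i^F)={\rm rk}(D_i^G)$ will follow immediately.

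First I would record the first-order chain rule. Writing $y_j=\sum_{i}a_{ji}x_i$, the hypothesis is $F(x)=G(y)=A(G)(x)$, and since the $y_j$ are linear in the $x_k$ we get $\frac{\partial}{\partial x_k}=\sum_{j}a_{jk}\frac{\partial}{\partial y_j}$. Let $\Phi_A^{(1)}\colon \mathcal{D}_1(x_1,\dots,x_m)\to\mathcal{D}_1(y_1,\dots,y_m)$ be the linear map $\frac{\partial}{\partial x_k}\mapsto\sum_j a_{jk}\frac{\partial}{\partial y_j}$; its matrix is (a transpose of) $A$, hence it is invertible. Let $\tau_A\colon Q\mapsto A(Q)$ denote the substitution $y\mapsto Ax$ on polynomials; because $A$ is invertible, $\tau_A$ restricts to a linear isomorphism on each graded piece, in particular on the space of degree $d-i$ forms, where the images of $D_i^F$ and $D_i^G$ live.

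Next I would pass from first order to order $i$. The constant-coefficient differential operators form the symmetric algebra on $\mathcal{D}_1$, so $\mathcal{D}_i=\mathrm{Sym}^i(\mathcal{D}_1)$, and I set $\Phi_A^{(i)}:=\mathrm{Sym}^i(\Phi_A^{(1)})$, which is again an isomorphism since $\Phi_A^{(1)}$ is. The key computation is that, \emph{because the substitution $x\mapsto Ax$ is linear} (its Jacobian is the constant matrix $A$, so no lower-order correction terms arise in the higher chain rule), applying an order-$i$ operator $P$ to $F$ yields
\[
P(F)(x)=\bigl(\Phi_A^{(i)}(P)\bigr)(G)(Ax)=\tau_A\!\left(\bigl(\Phi_A^{(i)}(P)\bigr)(G)\right).
\]
In terms of the differential mappings this reads $D_i^F=\tau_A\circ D_i^G\circ\Phi_A^{(i)}$.

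Finally, since $\tau_A$ and $\Phi_A^{(i)}$ are isomorphisms, the three maps have a common rank, which gives ${\rm rk}(D_i^F)={\rm rk}(D_i^G)$ for every $1\le i\le d$. The only genuinely delicate point is the displayed identity: one must check that for a \emph{linear} change of variables the order-$i$ chain rule is exactly the $i$-th symmetric power of the first-order one, with no extra terms. This is where the linearity of $A$ is essential, and it is the heart of the argument; everything else is the routine bookkeeping of rank under composition with isomorphisms.
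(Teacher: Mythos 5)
Your proof is correct and follows essentially the same route as the paper's source: the cited argument from \cite[Theorem 3.5]{OY19} (reproduced in the proof of Lemma \ref{lem:inv} for $i=1$) factors $D_i^F$ as $q\circ D_i^G\circ p$ with $p$ the (symmetric power of the) transposed substitution on differential operators and $q$ the linear change of coordinates on polynomials, exactly your $\tau_A\circ D_i^G\circ\Phi_A^{(i)}$. You correctly identify the one point needing care — that linearity of $A$ ensures the order-$i$ chain rule is the $i$-th symmetric power of the first-order one with no lower-order corrections — so nothing is missing.
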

	
	We now introduce the new notion of characteristic sets and prove that they are invariants of homogeneous polynomials up to linear transformations.
	\begin{definition}\label{def:SrF}
	 Let $F(x_1,\dots,x_m)$ be a homogeneous polynomial of degree $d\geq 2$.  Let $r$ be a positive integer. We define 
	$$S_r^F:=\{(l_1,\dots,l_m)\in {\mathbb C}^m\mid {\rm rk} (D_1^{l_1\frac{\partial F}{\partial x_1}+l_2\frac{\partial F}{\partial x_2}+\cdots+l_m\frac{\partial F}{\partial x_{m}}})=r\}.$$
	We call $S_r^F$ the {\it $r$-th characteristic set} of $F$. We define $V_r^F\subseteq {\mathbb C}^m$ to be the subspace spanned by $S_r^F$.
	\end{definition}
	 Then we have the following lemma.

	\begin{lemma}\label{lem:inv}
		Let $G=G(y_1,y_2,\dots,y_m)$ be a homogeneous polynomial of degree $d$, and let $F(x_1,\dots,x_m)=G(\sum_{i=1}^{m}a_{1i}x_i,\dots,\sum_{i=1}^{m}a_{mi}x_i)$, where $A=(a_{ij})_{1\leq i,j\leq m}\in {\rm GL}(m,{\mathbb C})$. 
		Consider the following linear transformation:
		$$P:{\mathbb C}^m\longrightarrow {\mathbb C}^m, \; (l_1,\dots,l_m)\longmapsto (\sum_{j=1}^{m}a_{1j}l_j,\dots,\sum_{j=1}^{m}a_{mj}l_j).$$
		Then $P(S_r^F)=S_r^G$ and $P(V_r^F)=V_r^G$. 
		In particular, if $A(F)=F$, then $P(S_r^F)=S_r^F$ and $P(V_r^F)=V_r^F$. 
	\end{lemma}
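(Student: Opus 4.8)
The plan is to trace how the differential operators transform under the change of coordinates $F(x) = G(xA^T)$ and verify that the rank condition defining $S_r^F$ matches that defining $S_r^G$ after applying $P$. First I would unwind the definitions: for a linear combination $\ell = \sum_i l_i \frac{\partial F}{\partial x_i}$, the claim is that $\mathrm{rk}(D_1^{\ell}) = r$ if and only if the corresponding combination on the $G$-side with coefficients $P(l_1,\dots,l_m)$ has first-order differential operator of the same rank. The key computation is the chain rule: since $F(x_1,\dots,x_m) = G(\sum_j a_{1j}x_j, \dots, \sum_j a_{mj}x_j)$, we have
\begin{equation*}
\frac{\partial F}{\partial x_i} = \sum_{k=1}^m a_{ki}\,\frac{\partial G}{\partial y_k}(xA^T).
\end{equation*}
Therefore $\sum_i l_i \frac{\partial F}{\partial x_i} = \sum_k \big(\sum_i a_{ki} l_i\big) \frac{\partial G}{\partial y_k}(xA^T)$, and the inner coefficient $\sum_i a_{ki}l_i$ is exactly the $k$-th component of $P(l_1,\dots,l_m)$. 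So the polynomial $\sum_i l_i \frac{\partial F}{\partial x_i}$ equals the polynomial $\sum_k m_k \frac{\partial G}{\partial y_k}$ evaluated at $xA^T$, where $(m_1,\dots,m_m) = P(l_1,\dots,l_m)$.

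Next I would invoke Theorem \ref{thm:dm} (the differential method of Oguiso--Yu): two homogeneous polynomials related by an invertible linear change of variables have equal ranks of their $i$-th order differential operators. Applying this with $i=1$ to the pair $\sum_i l_i \frac{\partial F}{\partial x_i}$ and $\sum_k m_k \frac{\partial G}{\partial y_k}$ — which are indeed related by the invertible substitution $y = xA^T$ as shown above — yields
\begin{equation*}
\mathrm{rk}\Big(D_1^{\sum_i l_i \frac{\partial F}{\partial x_i}}\Big) = \mathrm{rk}\Big(D_1^{\sum_k m_k \frac{\partial G}{\partial y_k}}\Big).
\end{equation*}
This shows $(l_1,\dots,l_m) \in S_r^F$ if and only if $P(l_1,\dots,l_m) \in S_r^G$, which is precisely $P(S_r^F) = S_r^G$ once I note that $P$ is invertible (being given by the matrix $A$). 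The statement $P(V_r^F) = V_r^G$ then follows immediately because $P$ is linear: a linear bijection carries the span of $S_r^F$ onto the span of its image $S_r^G$.

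For the final assertion, I would simply specialize to the case $G = F$ and $A(F) = F$; by our convention $A(F)(x) = F(xA^T)$, so the hypothesis of the lemma holds with $G = F$, giving $P(S_r^F) = S_r^F$ and $P(V_r^F) = V_r^F$. The main obstacle — really the only point requiring care — is the bookkeeping of indices and transposes in the chain-rule computation, to confirm that the coefficient vector transforming via $P$ is genuinely the one appearing in the $G$-side combination and not, say, its transpose or inverse. Once the identity $\frac{\partial F}{\partial x_i} = \sum_k a_{ki} \frac{\partial G}{\partial y_k}(xA^T)$ is pinned down correctly (matching the paper's convention $A(F)(x) = F(xA^T)$ from (2.1)), the rest is a direct application of Theorem \ref{thm:dm} and the linearity of $P$.
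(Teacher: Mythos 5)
Your proof is correct and follows essentially the same route as the paper: the chain-rule identity $\frac{\partial F}{\partial x_i}=\sum_k a_{ki}\frac{\partial G}{\partial y_k}(xA^T)$ is exactly the content of the commutative diagram the paper borrows from the proof of Theorem \ref{thm:dm}, and both arguments then conclude by applying Theorem \ref{thm:dm} to the pair of first-derivative combinations related by the substitution $y=xA^T$, together with invertibility and linearity of $P$. No gaps.
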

	
	\begin{proof}
		By the proof of \cite[Theorem 3.5]{OY19}, we have the following commutative diagram:
		\[\xymatrixcolsep{5pc}\xymatrix{
			\mathcal{D}_1(x_1,\dots,x_m) \ar[d]^p \ar[r]^{D_1^F} &\mathbb{C}[x_1,\dots,x_m]\\
			\mathcal{D}_1(y_1,\dots,y_m) \ar[r]^{D_1^G} &\mathbb{C}[y_1,\dots,y_m]\,\, ,\ar[u]^q}\]
		where the isomorphisms $p$ and $q$ are defined as follows:
		$$p(\frac{\partial}{\partial x_i})=\sum_{j=1}^{m}{a_{ji}\frac{\partial}{\partial y_j}},~~~~~~q(H(y_1,\dots,y_m))=H(\sum_{i=1}^{m}{a_{1i}x_i},\dots,\sum_{i=1}^{m}{a_{mi}x_i}).$$ 
		Then we have 
		$$p({\frac{\partial }{\partial x_1}l_1+\frac{\partial }{\partial x_2}l_2+\cdots+\frac{\partial}{\partial x_{m}}l_{m}})=\sum_{j=1}^{m}{a_{j1}l_1\frac{\partial}{\partial y_j}}+\sum_{j=1}^{m}{a_{j2}l_2\frac{\partial}{\partial y_j}}+\cdots+\sum_{j=1}^{m}{a_{jm}l_m\frac{\partial}{\partial y_j}}.$$
		Note that the coefficients of $\frac{\partial}{\partial y_j}$ on the right-hand side correspond to the images of $(l_1,\dots,l_m)$ under the mapping $P$. 
		
		Since $q$ induces a linear change of coordinates from the polynomial $\sum_{j=1}^{m}{a_{j1}l_1\frac{\partial G}{\partial y_j}}+\cdots+\sum_{j=1}^{m}{a_{jm}l_m\frac{\partial G}{\partial y_j}}$ to the polynomial $\frac{\partial F}{\partial x_1}l_1+\cdots+\frac{\partial F}{\partial x_{m}}l_{m}$, by Theorem \ref{thm:dm}, we have
		$${\rm rk}(D_1^{\sum_{j=1}^{m}{a_{j1}l_1\frac{\partial G}{\partial y_j}}+\cdots+\sum_{j=1}^{m}{a_{jm}l_m\frac{\partial G}{\partial y_j}}})={\rm rk}(D_1^{\frac{\partial F}{\partial x_1}l_1+\cdots+\frac{\partial F}{\partial x_{m}}l_{m}}).$$
		Since $A$ is an invertible matrix, we have $P(S_r^F)=S_r^G$, and hence $P(V_r^F)=V_r^G$.
		\end{proof}
	
	\begin{remark} 
			Note that by the mapping $(l_1,\dots,l_m)\longmapsto (l_1:\cdots:l_m)$, we can view $S_r^F$ as a subset of ${\mathbb P}^{m-1}$ and $V_r^F$ as a linear subspace of ${\mathbb P}^{m-1}$.
		\end{remark}
		
		Next, we introduce the relationship between  characteristic sets and partitionability of cubic forms.

	\begin{lemma} \label{lem:linear}
		Let $F=F(x_1,x_2,\dots,x_{m})$ be a smooth cubic form. Then $F$ has a $(1,m-1)$-type partition if and only if the first characteristic set $S_1^F\neq \emptyset$.
	\end{lemma}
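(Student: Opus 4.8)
The plan is to read off both implications from the concrete meaning of membership in $S_1^F$. First I record the dictionary: since $F$ is cubic, for a vector $v=(l_1,\dots,l_m)$ the directional derivative $\sum_{i} l_i\frac{\partial F}{\partial x_i}$ is a quadratic form $Q_v$, and for any quadratic form $Q$ one has ${\rm rk}(D_1^{Q})={\rm rk}(Q)$, because the linear forms $\frac{\partial Q}{\partial x_j}$ span the row space of the Gram matrix of $Q$. Hence $v\in S_1^F$ means precisely that $Q_v=c\,\ell^2$ for some nonzero linear form $\ell$ and some $c\neq 0$; that is, some directional derivative of $F$ is a rank-one quadric.

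For the forward implication, suppose $F$ has a $(1,m-1)$-type partition, so $A(F)=a\,x_1^3+H(x_2,\dots,x_m)$ for some $A\in\GL(m,\C)$; here $a\neq 0$, since otherwise $A(F)$ would not involve $x_1$ and would be singular at $[1:0:\cdots:0]$. Then $\frac{\partial}{\partial x_1}A(F)=3a\,x_1^2$ is a rank-one quadratic form, so $e_1=(1,0,\dots,0)\in S_1^{A(F)}$. By Lemma \ref{lem:inv}, $S_1^F$ is the image of $S_1^{A(F)}$ under an invertible linear map, hence nonempty.

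For the converse, choose $v\in S_1^F$. Since partitionability and smoothness are both preserved under linear equivalence, Lemma \ref{lem:inv} lets me replace $F$ by a linearly equivalent form carrying the direction $v$ to $e_1$; thus I may assume $\frac{\partial F}{\partial x_1}=c\,\ell^2$ with $c\neq0$ and $\ell$ a nonzero linear form. Write $\ell=\beta x_1+\mu(x')$ with $x'=(x_2,\dots,x_m)$ and $\mu$ linear in $x'$. If $\beta\neq 0$, integrating in $x_1$ gives $F=\frac{c}{3\beta}\,\ell^3+R(x')$ for some cubic $R$ depending only on $x'$; passing to the coordinate system $(\ell,x_2,\dots,x_m)$ (invertible because $\beta\neq0$) displays $F$ as a cube in one coordinate plus a form in the remaining $m-1$, i.e.\ a $(1,m-1)$-type partition.

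The one case that must be excluded, and the crux of the argument, is $\beta=0$, where $\ell=\mu(x')$ is independent of $x_1$. Here integration yields $F=c\,x_1\,\mu(x')^2+R(x')$, and I claim such an $F$ cannot be smooth. Indeed, at the point $[1:0:\cdots:0]$ we have $\frac{\partial F}{\partial x_1}=c\,\mu(x')^2$ and $\frac{\partial F}{\partial x_i}=2c\,x_1\,\mu(x')\frac{\partial \mu}{\partial x_i}+\frac{\partial R}{\partial x_i}$ for $i\geq 2$, and all of these vanish there because $\mu$ and each $\frac{\partial R}{\partial x_i}$ vanish at $x'=0$. Thus $X_F$ would be singular at $[1:0:\cdots:0]$, contradicting the hypothesis, so $\beta=0$ cannot occur and the converse follows. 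The smoothness input is exactly what forbids this degenerate configuration, while everything else is bookkeeping through the characteristic-set formalism of Lemma \ref{lem:inv}.
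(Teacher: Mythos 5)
Your proof is correct and follows essentially the same route as the paper: both reduce, via a linear change of coordinates sending the chosen vector of $S_1^F$ to the first basis vector, to the statement that $\frac{\partial F}{\partial x_1}$ is the square of a linear form $\ell$, and both split on whether the coefficient of $x_1$ in $\ell$ vanishes, using smoothness at $[1:0:\cdots:0]$ to exclude the degenerate case and integrating to exhibit the $(1,m-1)$-type partition otherwise. The only cosmetic difference is that you make explicit the identification ${\rm rk}(D_1^{Q})={\rm rk}(Q)$ for quadrics and spell out the (easy) forward implication, which the paper dismisses as clear.
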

	
	\begin{proof}
		The ``only if" part is clear. To prove ``if" part, we assume $(l_1,\dots,l_{m})\in S_1^F$.
		Since it is clear that not all of $l_1,\dots,l_{m}$ are zero, there exists an invertible matrix $B=(b_{ij})_{1\leq i,j\leq m}$ such that $b_{i1}=l_i$, for $1\leq i\leq m$. Consider the linear change of coordinates:$$(x_1,...,x_{m})^T=B\cdot (z_1,...,z_{m})^T.$$
		Here $T$ denotes the transpose. By substituting, we define $$G(z_1,...,z_{m})=F(\sum_{i=1}^{m}b_{1i}z_i,\dots,\sum_{i=1}^{m}b_{mi}z_i).$$
		 Applying $\frac{\partial}{\partial z_1}$ to both sides of the equality, we have $$\frac{\partial G}{\partial z_1}=\frac{\partial F}{\partial x_1}\frac{\partial x_1}{\partial z_1}+\cdots+\frac{\partial F}{\partial x_{m}}\frac{\partial x_{m}}{\partial z_1}=\frac{\partial F}{\partial x_1}l_1+\cdots+\frac{\partial F}{\partial x_{m}}l_{m}.$$ 
		By Theorem \ref{thm:dm}, 
		$${\rm rk}(D_1^{\frac{\partial G}{\partial z_1}})={\rm rk}(D_1^{\frac{\partial F}{\partial x_1}l_1+\cdots+\frac{\partial F}{\partial x_{m}}l_{m}})=1.$$
		Since $\frac{\partial G}{\partial z_1}$ is a homogeneous quadratic polynomial in variables $z_1,\dots,z_m$, we infer that $\frac{\partial G}{\partial z_1}=(\lambda_1z_1+\lambda_2z_2+\cdots+\lambda_{m}z_{m})^2$, where $\lambda_1,\dots,\lambda_{m} \in \C$. 
		
		If $\lambda_1=0$, then $G=z_1(\lambda_2z_2+\dots+\lambda_{m}z_{m})^2+H(z_2,...,z_m)$, where $H$ is a homogeneous polynomial of degree $3$. Then $X_G\subset \mathbb{P}^{m-1}$ is singular at $(z_1:z_2:\cdots:z_m)=(1:0:\cdots:0)$, a contradiction. Therefore, we have $\lambda_1\neq 0$ and $G=\frac{1}{3\lambda_1}(\lambda_1z_1+\lambda_2z_2+\cdots+\lambda_{m}z_{m})^3+K(z_2,\dots,z_{m})$, where $K(z_2,\dots,z_{m})$ is a homogeneous polynomial of degree $3$. This implies that $F$ has a $(1,m-1)$-type partition.
	\end{proof}
	
	\begin{remark}\label{rem:m345} 
		Note that for a smooth cubic form $F=F(x_1,...,x_m)$ with $m\in\{3,4,5\}$, $F$ being unpartitionable is equivalent to the first characteristic set $S_1^F=\emptyset$ by Lemmas \ref{lem:2n3n} and \ref{lem:linear}. 
	\end{remark}
	
	Next, we will control shape of matrices that preserve partitionable polynomials of degree $3$. Based on this, we will determine the structure of automorphism groups of cubic hypersurfaces defined by polynomials admitting certain types of partition, which will be crucial for our classification of $(5,3)$-groups and $(4,3)$-groups in later sections.
	
	\begin{proposition}\label{prop:unp+fem} 
		Let $F=F(x_1,\dots,x_{m})$ be a smooth cubic form with $m\ge 4$. Suppose $F=H(x_1,\dots,x_k)+K(x_{k+1},\dots,x_{m})$, where $3\leq k \leq m-1$, $S_1^H=\emptyset$, and $K=x_{k+1}^3+...+x_m^3$. Then $$G_F \subset\{\begin{pmatrix}B&0\\0&C\end{pmatrix}\mid B\in \GL(k,\C), C\in \GL(m-k,\C) \}.$$ 
	\end{proposition}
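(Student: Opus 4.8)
The plan is to pin down the first characteristic set $S_1^F$ explicitly, feed its invariance (Lemma \ref{lem:inv}) into the shape of the matrices in $G_F$, and then remove the one remaining off-diagonal block by a direct comparison of coefficients. First I would record that smoothness of $X_F$ forces $X_H$ to be smooth: the partial derivatives of $F$ are $\partial H/\partial x_i$ for $i\le k$ and $3x_i^2$ for $i>k$, so any common zero of all of them has $x_{k+1}=\cdots=x_m=0$ together with $\nabla H=0$; hence $X_F$ smooth implies $\nabla H$ has no nontrivial zero, i.e.\ $X_H\subset\P^{k-1}$ is smooth. In particular the quadrics $\partial H/\partial x_1,\dots,\partial H/\partial x_k$ are linearly independent, for otherwise a nonzero relation $\sum_i l_i\,\partial H/\partial x_i=0$ would make $H$ invariant under translation in the direction $(l_1,\dots,l_k)$, exhibiting $X_H$ as a cone and contradicting smoothness.

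Next I would compute $S_1^F$. For $l=(l_1,\dots,l_m)$ write $v=(l_1,\dots,l_k)$ and $u=(l_{k+1},\dots,l_m)$. Since
$$\sum_{i=1}^m l_i\frac{\partial F}{\partial x_i}=\sum_{i=1}^k v_i\frac{\partial H}{\partial x_i}+3\sum_{p=1}^{m-k}u_p\,x_{k+p}^2$$
is a sum of two quadratic forms in disjoint sets of variables, its rank is the sum of the two ranks, and the second summand has rank equal to the number of nonzero entries of $u$. By hypothesis $S_1^H=\emptyset$, so the $H$-summand never has rank exactly $1$, while by the linear independence just established it has rank $0$ only when $v=0$. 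Hence the total rank equals $1$ precisely when $v=0$ and exactly one coordinate of $u$ is nonzero, so $S_1^F=\bigcup_{p=1}^{m-k}\C^{\ast}e_{k+p}$ and therefore $V_1^F=\langle e_{k+1},\dots,e_m\rangle=:W$.

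Now let $A\in G_F$. By Lemma \ref{lem:inv} the associated map $P$, which is multiplication by $A$, preserves $V_1^F=W$; thus $A$ carries each $e_{k+p}$ into $W$, which is exactly the statement that the top-right $k\times(m-k)$ block of $A$ vanishes. Writing $A=\left(\begin{smallmatrix}B&0\\E&C\end{smallmatrix}\right)$ with $B\in\GL(k,\C)$ and $C\in\GL(m-k,\C)$ (both invertible because $\det A=\det B\det C$), it remains to kill $E$. For this I would substitute directly: under $A$ the new variable $y_{k+p}=\ell_p+c_p$ splits into a part $\ell_p$ linear in $x_1,\dots,x_k$ (coming from $E$) and a part $c_p$ linear in $x_{k+1},\dots,x_m$ (coming from $C$), while $y_1,\dots,y_k$ involve only $x_1,\dots,x_k$. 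Expanding $A(F)=H(y_1,\dots,y_k)+\sum_p(\ell_p+c_p)^3$ and comparing with $F=A(F)$, which has no monomials of bidegree $(1,2)$ in $(x_{\le k},x_{>k})$, forces $\sum_p\ell_p c_p^2=0$. As $C$ is invertible the $c_p$ are independent linear forms, so the squares $c_p^2$ are linearly independent; since the $\ell_p$ lie in the disjoint variables $x_1,\dots,x_k$, this yields $\ell_p=0$ for every $p$, i.e.\ $E=0$, and $A$ is block diagonal.

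The conceptual heart is the identification $V_1^F=W$, and I expect the main obstacle to be that this identification alone is not quite enough: it uses both hypotheses essentially ($S_1^H=\emptyset$ to exclude rank-one contributions from $H$, and smoothness of $H$ to exclude rank-zero contributions from nonzero $v$), but it only forces the top-right block to vanish. Upgrading ``$A$ preserves $W$'' to genuine block-diagonality is the delicate step, and it is precisely there that the Fermat shape of $K$ must be used, through the independence of the $c_p^2$, to eliminate the lower-left block $E$.
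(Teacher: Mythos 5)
Your proof is correct and follows essentially the same route as the paper's: both rest on the rank-one analysis of the directional derivatives $\sum_i l_i\,\partial F/\partial x_i$ (using $S_1^H=\emptyset$ together with rank additivity over disjoint variable groups) to force the upper-right block of $A$ to vanish, and then a comparison of coefficients in $A(F)=F$ to kill the lower-left block. Packaging the first step through $V_1^F$ and Lemma \ref{lem:inv} instead of differentiating $F=A(F)$ directly, and using linear independence of the squares $c_p^2$ in place of the paper's reduction of the lower-right block to monomial form, are only cosmetic differences; you also make explicit the linear independence of $\partial H/\partial x_1,\dots,\partial H/\partial x_k$ (needed to exclude the rank-zero case), which the paper leaves implicit.
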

	
	\begin{proof}
		Note that smoothness of $F$ implies that $H\neq 0$.
		Let $A=(a_{ij})_{1\leq i,j\leq m}\in G_F$ and $y_i=\sum_{j=1}^{m}{a_{ij}x_j},1\leq i\leq m$. By $F=A(F)$, we have $H(x_1,...,x_k)+K(x_{k+1},...,x_m)=H(y_1,...,y_k)+K(y_{k+1},...,y_m)$. Applying $\frac{\partial}{\partial x_t}$ to both sides of this equality for $k+1\leq t\leq m$, we have	
		$$3x_t^2=\sum_{i=1}^{k}\frac{\partial H(y_1,\dots,y_k)}{\partial y_i}{a_{it}}+\sum_{j=k+1}^{m}{3y_j^2a_{jt}}.$$
		Denote the right-hand side as $J$. Then ${\rm rk}(D_1^{J})={\rm rk} (D_1^{3x_t^2})=1$. Since $S_1^H=\emptyset$, we have $a_{it}=0$ for all $1\leq i\leq k$. In fact, suppose for given $t$ there exists $1\leq i\leq k$ with $a_{it}\neq 0$, then from $S_1^H=\emptyset$,  we know the polynomial 
		$J_1=\sum a_{it}\frac{\partial H}{\partial y_i}$
			cannot satisfy $\Rank(D_1^{J_1})=1$. So $\Rank(D_1^{J_1})\geq2$, which implies $\Rank(D_1^J)\geq2$,
			contradiction. Similarly, for each $t\in\{k+1,...,m\}$,  there exists exactly one index $j_t\in\{k+1,...,m\}$ such that $a_{j_t t}\neq 0$. Since $A$ is invertible, without loss of generality, we may assume that $j_t=t$ for all $t\in\{k+1,...,m\}$. Therefore, $H(x_1,...,x_k)+x_{k+1}^3+...+x_m^3$ is equal to $$H(\sum_{i=1}^{k}{a_{1i}x_i},\dots,\sum_{i=1}^{k}{a_{ki}x_i})+K(a_{(k+1)(k+1)}x_{k+1}+\sum_{j=1}^{k}{a_{(k+1)j}x_j},\dots,a_{m m}x_{m}+\sum_{j=1}^{k}{a_{mj}x_j}).$$
		Direct calculation shows that $a_{ij}=0$ for $k+1\leq i\leq m$ and $1\leq j\leq k$. Thus, $A$ is of the desired shape.
	\end{proof}

	\begin{lemma}\label{lem:3vari}
		Let $F(x_1,x_2,x_3)$ be a smooth cubic form. Then $G_F$ is isomorphic to $C_3^2\rtimes S_3$, $C_3^3\rtimes S_3$, or $(C_3^2\rtimes C_3)\rtimes C_4$.
	\end{lemma}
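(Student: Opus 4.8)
The plan is to exploit the fact that $X_F\subset\P^2$ is a smooth plane cubic, i.e.\ an elliptic curve $E$, and to recover $G_F$ from the (classical) projective automorphism group of $E$ together with the scalars killed in $\PGL(3,\C)$. Since we only need $G_F$ up to isomorphism, I would first use the classical Hesse normalization to assume $F=x_1^3+x_2^3+x_3^3+\lambda x_1x_2x_3$ with $1+\lambda^3/27\neq0$ (cf.\ Lemma \ref{lem:2n3n} and \cite[Section 3.1.2]{Do12}). Next I would study the restriction of $\pi\colon\GL(3,\C)\to\PGL(3,\C)$ to $G_F$. Its kernel is the set of scalars $cI_3$ with $c^3=1$, namely $\langle\xi_3 I_3\rangle\cong C_3$, which is central in $G_F$; and its image is exactly $\Aut(X_F)=\{[A]\mid [A](X_F)=X_F\}$, because any $[A]$ preserving $X_F$ satisfies $A(F)=cF$ for some $c\in\C^\times$ and may be rescaled to $c^{-1/3}A\in G_F$. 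Thus $G_F$ is a central extension
\[ 1\longrightarrow C_3\longrightarrow G_F\longrightarrow \Aut(X_F)\longrightarrow 1. \]

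I would then invoke the classical description $\Aut(X_F)\cong E[3]\rtimes\Aut(E,O)$, where $E[3]\cong C_3^2$ is the translation group by the nine flexes and $\Aut(E,O)$ is cyclic of order $2$, $4$, or $6$ according as $j(E)\neq 0,1728$; $j(E)=1728$; or $j(E)=0$. Hence $|\Aut(X_F)|\in\{18,36,54\}$ and $|G_F|\in\{54,108,162\}$, matching the orders of the three target groups. To pin down the isomorphism type I would write down explicit generators in $G_F$. For every $\lambda$ the cyclic permutation $\sigma$ of the coordinates and $\tau=\Diag(1,\xi_3,\xi_3^2)$ lie in $G_F$, and since $\sigma\tau\sigma^{-1}=\xi_3^{-1}\tau$ with $\sigma^3=\tau^3=I_3$, the subgroup $\langle\sigma,\tau\rangle$ is the extraspecial group $3^{1+2}$ of order $27$ with center $\langle\xi_3 I_3\rangle$. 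The coordinate transposition also lies in $G_F$. In the generic case this gives the monomial group $D_0\rtimes S_3$ with $D_0=\{\Diag(a,b,c)\mid a^3=b^3=c^3=abc=1\}\cong C_3^2$; its order is $54$, so by the extension count it equals all of $G_F$, and $D_0\rtimes S_3\cong C_3^2\rtimes S_3$. For the Fermat cubic ($\lambda=0$, $j(E)=0$) the constraint $abc=1$ disappears, the full diagonal torsion $C_3^3$ enters, and $G_F=C_3^3\rtimes S_3$ of order $162$. For $j(E)=1728$ an extra projective automorphism of order $4$ appears; lifting it and combining with $\langle\sigma,\tau\rangle$ yields $G_F\cong(C_3^2\rtimes C_3)\rtimes C_4$ of order $108$.

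The main obstacle is precisely this last identification: the orders alone do not determine the group, and I must match each central $C_3$-extension to the exact named group. I expect the generic and Fermat cases to be transparent from the explicit generators (the subgroup $D_0\rtimes\langle\sigma\rangle$ is visibly the Heisenberg group $3^{1+2}$, since $\sigma$ acts on $D_0$ as a transvection fixing the scalar line), while the $j(E)=1728$ case requires actually exhibiting the order-$4$ element for the correct $\lambda$ and verifying the semidirect structure. Each of the three abstract isomorphisms $3^{1+2}\rtimes C_2\cong C_3^2\rtimes S_3$, the Fermat monomial group $\cong C_3^3\rtimes S_3$, and the $j=1728$ group $\cong (C_3^2\rtimes C_3)\rtimes C_4$ can then be confirmed either by tracking the actions by hand or by a short GAP structure check, consistent with the computational methods used elsewhere in the paper.
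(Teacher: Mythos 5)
Your proposal is correct, and it reaches the classification by a genuinely different route from the paper's. The paper first shows $|G_F|=2^a3^b$ (no primes $>3$ by smoothness, citing \cite{GL13}), then caps the Sylow subgroups by classifying the abelian subgroups of $\GL(3,\C)$ that can preserve a smooth plane cubic ($|(G_F)_2|\le 4$, $|(G_F)_3|\le 3^4$, with the extremal cases forcing $F$ to be projectively equivalent to $x_1^2x_2+x_2^2x_3+x_3^3$ or to the Fermat cubic), and only then passes to the Hesse pencil and does a case analysis on $\lambda$, exhibiting explicit generators in the $(C_3^2\rtimes C_3)\rtimes C_4$ case. You instead go straight to the Hesse form, set up the central extension $1\to\langle\xi_3I_3\rangle\to G_F\to\Aut(X_F)\to 1$ (which must be argued separately here, since Lemma \ref{lem:exactseq} and Theorem \ref{thm:F-liftable} exclude the case $(m,d)=(3,3)$ --- you do this correctly), and import the classical fact $\Aut(X_F)\cong E[3]\rtimes\Aut(E,O)$ to read off $|G_F|\in\{54,108,162\}$ from the $j$-invariant trichotomy; the explicit monomial subgroup $D_0\rtimes S_3$ and the Heisenberg group $\langle\sigma,\tau\rangle$ then pin down the generic and Fermat cases by an order count. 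Your approach is shorter and makes the order computation transparent where the paper only gestures at ``some calculations''; the paper's Sylow-theoretic method has the advantage of being uniform with the techniques used for fourfolds and fivefolds, where no analogue of the elliptic-curve structure is available. One small point you flag yourself: in the $j=1728$ case the identification $G_F\cong(C_3^2\rtimes C_3)\rtimes C_4$ needs slightly more than the order $108$; note that the splitting off of $C_4$ is automatic by Schur--Zassenhaus once you observe that the Sylow $3$-subgroup (the preimage of $E[3]$, i.e.\ the Heisenberg group) is normal, and the action of $C_4=\Aut(E,O)$ on $E[3]$ is the standard order-$4$ element of ${\rm SL}(2,\mathbb{F}_3)$, so the isomorphism type is determined without exhibiting the order-$4$ matrix explicitly (though the paper's generator $\frac{1}{\sqrt3}(\xi_3^{(i-1)(j-1)})_{i,j}$ does the job concretely).
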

	
	\begin{proof}
	 Smoothness of $F$ implies that for any prime $p>3$, $p\nmid |G_F|$ (see the proof of \cite[Theorem 1.3]{GL13}). Therefore, $|G_F|=2^a\cdot3^b$, where $a,b\geq0$. By some calculations and the smoothness of $F$, we can conclude that
\begin{enumerate}
		\item[(i)] if $|(G_F)_2|\geq 4$, then $B_1(F)= x_1^2x_2+x_2^2x_3+x_3^3$ for some $B_1\in {\rm GL}(3,{\mathbb C})$ and $|(G_F)_2|=4$; and

		\item[(ii)]  if $|(G_F)_3|\geq 3^4$, then $B_2(F)= x_1^3+x_2^3+x_3^3$ for some $B_2\in {\rm GL}(3,{\mathbb C})$ and $|(G_F)_3|=3^4$.
\end{enumerate}
		In fact, if $|(G_F)_2|\geq 4$ (resp. $|(G_F)_3|\geq 3^4$), then $G_F$ contains an abelian subgroup $N$ of order $4$ (resp. $3^3$) and up to conjugation in $\GL(3,\C)$, we have $N=\langle{\rm diag}(\xi_4,-1,1)\rangle\cong C_4$ (resp. $N=\langle {\rm diag}(\xi_3,1,1), {\rm diag}(1,\xi_3,1), {\rm diag}(1,1,\xi_3) \rangle \cong C_3^3$) by smoothness of $F$ (see e.g. \cite[Theorem 7.7]{OY19} and \cite[Theorem 5.2]{WY20}). From this, we infer that (i) and (ii) hold.
		
		On the other hand, by Lemma \ref{lem:2n3n}, we can assume that $F=H_{\lambda}:=x_1^3+x_2^3+x_3^3+\lambda x_1x_2x_3$, where $\lambda\in{\mathbb C}$. Therefore, $C_3^2\rtimes S_3$ is a subgroup of $G_F$ and $3^3\cdot2$ divides $|G_F|$. Moreover, the following statements hold:
		
		(1) If $\lambda=0$, $G_F=C_3^3\rtimes S_3$;
		
		(2) If $\lambda=3(\sqrt{3}-1)$, $G_F$ is
		$$\left\langle \begin{pmatrix}0&1&0\\0&0&1\\1&0&0\\ \end{pmatrix}, \begin{pmatrix}1&0&0\\0&\xi_3&0\\0&0&\xi_3^2\\ \end{pmatrix}, \frac{1}{\sqrt{3}}\begin{pmatrix}1&1&1\\1&\xi_3&\xi_3^2\\1&\xi_3^2&\xi_3\\ \end{pmatrix}\right\rangle \cong(C_3^2\rtimes C_3)\rtimes C_4;$$
		
		(3) If there exists no $B\in {\rm GL}(3,{\mathbb C})$ with $B(F)=H_0, H_{3(\sqrt{3}-1)}$, then $G_F=C_3^2\rtimes S_3$.	\end{proof}

	\begin{proposition}\label{prop:1+3+3}
		Let $F=F(x_1,\dots,x_{7})$ be a smooth cubic form with $F=x_1^3+H(x_2,x_3,x_4)+K(x_5,x_6,x_7)$, where $H$ and $K$ are unpartitionable. If $A\in {\rm GL}(7,{\mathbb C})$ satisfies $A(F)=F$, then either $A=\begin{pmatrix}1&0&0\\0&B&0\\0&0&C\end{pmatrix}$ or $A=\begin{pmatrix}1&0&0\\0&0&B\\0&C&0\end{pmatrix}$, where $B,C\in{\rm GL}(3,\C)$.
	\end{proposition}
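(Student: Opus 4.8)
The plan is to read off the block structure of $A$ from the characteristic sets $S_r^F$, which by Lemma \ref{lem:inv} are invariant under the map $v\mapsto Av$. Write $L=\langle e_1\rangle$, $W_H=\langle e_2,e_3,e_4\rangle$, $W_K=\langle e_5,e_6,e_7\rangle$. Since $F=x_1^3+H+K$ separates the three groups of variables, the quadric $\sum_i l_i\frac{\partial F}{\partial x_i}$ is block diagonal in these groups, so its rank is $\varepsilon(l_1)+r_H(l_2,l_3,l_4)+r_K(l_5,l_6,l_7)$, where $\varepsilon(l_1)=1$ if $l_1\neq0$ and $0$ otherwise, $r_H(v):=\Rank(D_1^{\sum_i v_i\partial H/\partial x_i})$, and $r_K$ is analogous. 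First I would record that smoothness of $F$ forces $H,K$ smooth, and that $r_H(v)\in\{0\}\cup\{2,3\}$: the value $0$ occurs only at $v=0$ (the partials of a smooth form are linearly independent, otherwise $X_H$ is a cone), and the value $1$ never occurs since $S_1^H=\emptyset$ by Remark \ref{rem:m345}; likewise for $r_K$. With this formula the low characteristic sets are immediate. The equation $\Rank=1$ forces $l_1\neq0$, $r_H=r_K=0$, so $S_1^F=L\setminus\{0\}$ and $V_1^F=L$; thus $A(L)=L$, i.e.\ the first column of $A$ is a nonzero multiple of $e_1$. The equation $\Rank=2$ forces $l_1=0$ with exactly one of $r_H,r_K$ equal to $2$, so $S_2^F=\Sigma_H\cup\Sigma_K$, where $\Sigma_H=\{v\in W_H:r_H(v)=2\}$ is the cone over the vanishing of $\det(\mathrm{Hess}\,H)(v)$, a nonzero cubic (the Hessian of a smooth plane cubic does not vanish identically); hence $\Sigma_H$ is a plane cubic curve and spans $W_H$, and similarly $\Sigma_K$ spans $W_K$. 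Therefore $V_2^F=W_H\oplus W_K$ and $A(W_H\oplus W_K)=W_H\oplus W_K$. Combined with $A(L)=L$ and $\C^7=L\oplus(W_H\oplus W_K)$, this makes $A=\mathrm{diag}(a_{11},D)$ block diagonal with $D\in\GL(6,\C)$; then $A(F)=F$ yields $a_{11}^3=1$ and $D(H+K)=H+K$.

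It remains to show that $D$ either preserves or interchanges the summand planes $W_H,W_K$. For this I would pass to the six-variable form $G:=H+K$ and run the differential method on the columns of $D$, exactly as in the proofs of Lemma \ref{lem:linear} and Proposition \ref{prop:unp+fem}: applying $\partial/\partial x_t$ to $D(G)=G$ shows that the $t$-th column of $D$ lies in $S_{r_t}^G$ with $r_t=\Rank(D_1^{\partial G/\partial x_t})\in\{2,3\}$. The crucial point—and the reason the argument now closes cleanly—is that in six variables \emph{both} $S_2^G$ and $S_3^G$ are contained in $W_H\cup W_K$: indeed $\Rank=r_H+r_K$ with $r_H,r_K\in\{0,2,3\}$ can equal $2$ or $3$ only as $(2,0),(0,2),(3,0),(0,3)$, each of which forces one of $l_H,l_K$ to vanish. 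Hence every column of $D$ lies in $W_H$ or in $W_K$; writing $I_H,I_K$ for the index sets of columns landing in $W_H,W_K$, invertibility of $D$ gives $|I_H|=|I_K|=3$.

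Finally I would use $D(H+K)=H+K$ to pin down $I_H,I_K$. Column purity means $D(H+K)$ is a sum of a cubic in $\{x_t:t\in I_H\}$ and a cubic in $\{x_t:t\in I_K\}$, so no monomial mixes an $I_H$-variable with an $I_K$-variable; equivalently, the interaction graph of $D(H+K)$ has no edge between $I_H$ and $I_K$. On the other hand, unpartitionability of $H$ and $K$ means that in the given coordinates the interaction graphs of $H$ on $\{2,3,4\}$ and of $K$ on $\{5,6,7\}$ are each connected (a disconnection would be a partition with $A=\id$), so the connected components of the interaction graph of $H+K=D(H+K)$ are exactly $\{2,3,4\}$ and $\{5,6,7\}$. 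Each such component must lie entirely within $I_H$ or within $I_K$, and the two cannot lie in the same part, for otherwise all six (independent) columns of $D$ would fall into a single $3$-dimensional plane. Hence $\{I_H,I_K\}=\{\{2,3,4\},\{5,6,7\}\}$: if $I_H=\{2,3,4\}$ then $D=\mathrm{diag}(B,C)$ and $A$ has the first stated shape, while if $I_H=\{5,6,7\}$ then $D$ is block-antidiagonal and $A$ has the second.

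I expect the main obstacle to be this last step—showing $D$ respects the \emph{pair} $\{W_H,W_K\}$, not merely their direct sum. Trying to separate the two planes at the level of the variety $S_2^F$ alone is delicate, because the Hessian cubics $\Sigma_H,\Sigma_K$ can be reducible (e.g.\ triangles of lines), so that $S_2^F$ need not determine the pair of planes; the route above sidesteps this by combining column-purity (from the six-variable characteristic sets) with the connectedness forced by unpartitionability. A secondary technical point to state carefully is that each $\Sigma_H,\Sigma_K$ spans its plane, i.e.\ that the Hessian of a smooth plane cubic is a nonzero cubic. (The argument determines the top-left entry only up to $a_{11}^3=1$; the displayed statement records the representative $a_{11}=1$, the remaining two cube roots of unity differing by the scalar $\xi_3$ on $x_1$.)
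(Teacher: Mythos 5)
Your proof is correct, and it runs on the same engine as the paper's --- the differential method, additivity of quadric rank across disjoint variable blocks, and $S_1^H=S_1^K=\emptyset$ --- but two of your three steps are packaged genuinely differently. For the reduction to $A=\Diag(a_{11},D)$, the paper differentiates with respect to $x_1$ to kill the first column below the corner and then kills the first row by noting that a monomial $x_1^2x_i$ would otherwise survive in $F$; you instead compute $S_1^F$ and $S_2^F$ explicitly and invoke invariance of $V_1^F=\langle e_1\rangle$ and $V_2^F=W_H\oplus W_K$ under $A$ (Lemma \ref{lem:inv}). This is tidier but costs you one extra verification, namely that $\Sigma_H$ spans $W_H$: your justification ``the Hessian of a smooth plane cubic is a nonzero cubic'' is not quite sufficient, since a nonzero cubic of the form $\ell^3$ has zero locus a single line, which does not span; what you actually need is that the Hessian is not a perfect cube, and this does hold (in Hesse form $x^3+y^3+z^3+\lambda xyz$ the Hessian is a nonzero multiple of $xyz$ or of another Hesse cubic). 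The column-purity step coincides with the paper's. For the last step the paper normalizes $H$ and $K$ to Hesse form and tracks the monomials $x_2x_3x_4$ and $x_5x_6x_7$ to force the three $H$-columns into a common block; your interaction-graph/connectivity argument is the coordinate-free version of the same idea, avoids the normal form, and closes with the same invertibility count. Finally, your observation that the argument only pins down $a_{11}^3=1$ rather than $a_{11}=1$ is accurate; the paper's own proof silently makes the same normalization.
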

	
	\begin{proof}
		Suppose $A=(a_{ij})_{1\leq i,j\leq 7}$. Let $(y_1,...,y_{7})^T=A\cdot (x_1,...,x_{7})^T$. Since $A(F)=F$, we have $F(x_1,\dots,x_7)=F(y_1,\dots,y_7)$. Applying $\frac{\partial }{\partial x_1}$ to both sides of the equality, we get 
		$$3x_1^2=3a_{11}y_1^2+\frac{\partial H(y_2,y_3,y_4)}{\partial x_1}+\frac{\partial K(y_5,y_6,y_7)}{\partial x_1}.$$
		Since $H$ and $K$ are unpartitionable, by Lemma \ref{lem:linear}, $S_1^H$ and $S_1^K$ are empty and thus $S_1^{H+K}$ is also empty. So we have $a_{i1}=0$ for $2\leq i\leq 7$. Then $a_{11}\neq 0$. Since 
		$$F=(a_{11}x_1+a_{12}x_2+a_{13}x_3+a_{14}x_4+a_{15}x_5+a_{16}x_6+a_{17}x_7)^3+A(H+K),$$
		the term $3a_{11}^2a_{1i}\cdot x_1^2x_i$, $2\leq i \leq7$ is in $F$ unless $a_{1i}=0$. So $A=\begin{pmatrix}1&0\\0&A_1\end{pmatrix}$, where $A_1=(a_{ij})_{2\leq i,j\leq 7}$.
	Similarly, for $2\leq j\leq 7$, we have
		$$\frac{\partial H(x_2,x_3,x_4)}{\partial x_j}+\frac{\partial K(x_5,x_6,x_7)}{\partial x_j}=\frac{\partial H(y_2,y_3,y_4)}{\partial x_j}+\frac{\partial K(y_5,y_6,y_7)}{\partial x_j}.$$
		By $S_1^H=S_1^K=\emptyset$, we get $${\rm rk}(D_1^{\frac{\partial H(x_2,x_3,x_4)}{\partial x_j}+\frac{\partial K(x_5,x_6,x_7)}{\partial x_j}})\in \{2,3\}$$
		and $${\rm rk}(D_1^{\frac{\partial H(y_2,y_3,y_4)}{\partial x_j}}), {\rm rk}(D_1^{\frac{\partial K(y_5,y_6,y_7)}{\partial x_j}})\in \{0,2,3\}.$$
		Then either $a_{2j}=a_{3j}=a_{4j}=0$ or $a_{5j}=a_{6j}=a_{7j}=0$.
		
		We may assume that $H(x_2,x_3,x_4)=x_2^3+x_3^3+x_4^3+\lambda_Hx_2x_3x_4$ and  $K(x_5,x_6,x_7)=x_5^3+x_6^3+x_7^3+\lambda_Kx_5x_6x_7$, where $\lambda_H\lambda_K\neq 0$. If $a_{22}=a_{32}=a_{42}=0$, we can infer that $a_{23}=a_{33}=a_{43}=0$ and $a_{24}=a_{34}=a_{44}=0$ (since otherwise the monomial $x_2x_3x_4$ is in neither $H(y_2,y_3,y_4)$ nor $K(y_5,y_6,y_7)$, which is a contradiction). Since A is invertible, we have $A=\begin{pmatrix}1&0&0\\0&0&B\\0&C&0\end{pmatrix}$. For the case $a_{52}=a_{62}=a_{72}=0$, we have $A=\begin{pmatrix}1&0&0\\0&B&0\\0&0&C\end{pmatrix}$ by similar arguments.
	\end{proof}
	
	Next, we consider the case where $F(x_1,\dots,x_{7})$ can be partitioned into $H(x_1,\dots,x_3)+K(x_4,\dots,x_{7})$, where $H$ and $K$ are both unpartitionable. For this goal, we need the following result.
	
	\begin{lemma}\label{lem:hyp}
		Let $F=F(x_1,x_2,x_3,x_4)$ be an unpartitionable smooth cubic form. Then the subspace $V\subseteq \C^4$ generated by the union $S_1^F\cup S_2^F\cup S_3^F$ is of dimension at least $3$. In particular, there exists an invertible matrix $A=(a_{ij})_{1\leq i,j\leq 4}$ such that $K(x_1,x_2,x_3,x_4):=A(F)$ satisfies the following:
		$$\Rank (D_1^\frac{\partial K}{\partial x_i})\leq 3,~~ i=1,2,3.$$
	\end{lemma}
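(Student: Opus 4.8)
The plan is to reinterpret the characteristic sets $S_r^F$ via the ranks of the polar quadrics of $F$ and then to run a dimension count on the degeneracy locus of these quadrics. For $l=(l_1,\dots,l_4)\in\C^4$ write
$Q_l:=l_1\frac{\partial F}{\partial x_1}+\cdots+l_4\frac{\partial F}{\partial x_4}$
for the (first) polar quadric of the cubic $F$ at $l$. Its symmetric matrix has $(j,k)$-entry $\sum_i l_i\,\partial^3 F/\partial x_i\partial x_j\partial x_k$, which is exactly the Hessian matrix $\mathrm{Hess}(F)$ evaluated at $l$, and $\Rank(D_1^{Q_l})$ equals the rank of this symmetric matrix. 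Hence $S_r^F=\{\,l\mid \Rank\,\mathrm{Hess}(F)(l)=r\,\}$ is the rank-$r$ locus of the family of polar quadrics.

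First I would record two consequences of the hypotheses. Smoothness of $F$ forces the four partials $\partial F/\partial x_i$ to be linearly independent (otherwise $X_F$ is a cone, hence singular), so $\mathrm{Hess}(F)(l)\neq 0$ for every $l\neq 0$; that is, projectively the rank-$0$ locus is empty. Unpartitionability, via Lemma \ref{lem:linear}, gives $S_1^F=\emptyset$. Therefore, viewing everything in $\P^3$, the set $S_2^F\cup S_3^F$ coincides with the full degeneracy locus $\{\,l\in\P^3\mid \det\mathrm{Hess}(F)(l)=0\,\}$ — the classical Hessian surface of $X_F$ — because every nonzero $l$ with singular polar quadric has polar rank $2$ or $3$.

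The crux is the dimension statement: this degeneracy locus has dimension at least $2$ in $\P^3$. Indeed, if $\det\mathrm{Hess}(F)\equiv 0$ then $S_2^F\cup S_3^F=\P^3$ and the span $V$ is all of $\C^4$; otherwise $\det\mathrm{Hess}(F)$ is a nonzero quartic form and its zero locus is a hypersurface, of pure dimension $2$. In either case a variety of dimension $\ge 2$ cannot be contained in a projective line $\P^1$, so the linear span of $S_2^F\cup S_3^F$ has projective dimension $\ge 2$, i.e. $\dim V\ge 3$. I expect this count to be the only real content; the identification of $S_r^F$ with the polar-rank stratification is the conceptual step that makes it transparent. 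The bound is sharp: when the Hessian quartic degenerates to a quadruple plane, the span is exactly a $3$-dimensional subspace.

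Finally, for the ``in particular'' clause, since $S_1^F=\emptyset$ we have $V=\mathrm{span}(S_2^F\cup S_3^F)$, and $\dim V\ge 3$ lets me choose three linearly independent vectors $l^{(1)},l^{(2)},l^{(3)}\in S_2^F\cup S_3^F$. Completing them to a basis, let $A$ be the invertible matrix whose $i$-th column is $l^{(i)}$ and set $K:=A(F)$. Exactly as in the proof of Lemma \ref{lem:linear}, $\partial K/\partial x_i$ is obtained from $l^{(i)}_1\frac{\partial F}{\partial x_1}+\cdots+l^{(i)}_4\frac{\partial F}{\partial x_4}=Q_{l^{(i)}}$ by the coordinate change associated with $A$; by Theorem \ref{thm:dm} this change preserves the rank of $D_1$, so $\Rank(D_1^{\partial K/\partial x_i})=\Rank(D_1^{Q_{l^{(i)}}})\le 3$ for $i=1,2,3$, as required.
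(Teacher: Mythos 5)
Your proof is correct and follows essentially the same route as the paper's: the paper likewise forms $N=\sum l_i\,\partial F/\partial x_i$, observes that $\Rank(D_1^N)$ is the rank of the matrix of linear forms $(b_{ij}(l))$ (your $\mathrm{Hess}(F)(l)$), and deduces the dimension bound from the fact that $\det(b_{ij})=0$ cuts out a hypersurface in $\P^3$. You merely spell out the details the paper leaves implicit (emptiness of the rank-$0$ and rank-$1$ loci, the $\det\equiv 0$ case, and the explicit construction of $A$), all of which check out.
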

	
	\begin{proof}
		Let $l_1,l_2,l_3,l_4\in\C$ and at least one of them is not zero. Considering the polynomial $N:=\frac{\partial F}{\partial x_1}l_1+\cdots+\frac{\partial F}{\partial x_4}l_4$, we have:
		
		$$\frac{\partial N}{\partial x_1}=b_{11}(l_1,\dots,l_4)x_1+\cdots+b_{14}(l_1,\dots,l_4)x_4,$$
		$$\vdots$$
		$$\frac{\partial N}{\partial x_4}=b_{41}(l_1,\dots,l_4)x_1+\cdots+b_{44}(l_1,\dots,l_4)x_4,$$
		where $b_{ij}$ are linear forms of $l_1,\dots,l_4$.
		
		Note that ${\rm rk}(D_1^N)={\rm rk}\begin{pmatrix}b_{11}&\cdots&b_{14}\\ \vdots&\ddots&\vdots\\b_{41}&\cdots&b_{44}\end{pmatrix}$. Then det$\begin{pmatrix}b_{11}&\cdots&b_{14}\\ \vdots&\ddots&\vdots\\b_{41}&\cdots&b_{44}\end{pmatrix}=0$ defines a hypersurface in $\P^3$. From this, we deduce the lemma.
	\end{proof}
	
	\begin{proposition} \label{prop:4+3}
		Let $F=F(x_1,\dots,x_{7})$ be a smooth cubic form such that $F(x_1,\dots,x_{7})=H(x_1,\dots,x_3)+K(x_4,\dots,x_{7})$, where $H$ and $K$ are unpartitionable. Then $$G_F \subset\{\begin{pmatrix}B&0\\0&C\end{pmatrix}\mid B\in \GL(3,\C), C\in \GL(4,\C) \}.$$
	\end{proposition}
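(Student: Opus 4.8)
The plan is to run the same differential-rank argument used in Proposition~\ref{prop:1+3+3}, but now exploiting Lemma~\ref{lem:hyp} to handle the $4$-variable block $K$. Let $A=(a_{ij})_{1\le i,j\le 7}\in G_F$ and set $(y_1,\dots,y_7)^T=A\cdot(x_1,\dots,x_7)^T$. From $A(F)=F$ I would differentiate the identity $H(x_1,x_2,x_3)+K(x_4,\dots,x_7)=H(y_1,y_2,y_3)+K(y_4,\dots,y_7)$ with respect to each variable. For each column index $j$ this produces an equation of the form $\frac{\partial F}{\partial x_j}=\sum_{i=1}^7 a_{ij}\frac{\partial F(y)}{\partial y_i}$, and the key tool is that the rank ${\rm rk}(D_1^{\cdot})$ of each side is a linear-transformation invariant (Theorem~\ref{thm:dm}).

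The heart of the argument is to show that the change of variables cannot mix the $H$-block $\{x_1,x_2,x_3\}$ with the $K$-block $\{x_4,\dots,x_7\}$. Since $H$ is an unpartitionable cubic in $3$ variables, by Remark~\ref{rem:m345} (combining Lemmas~\ref{lem:2n3n} and~\ref{lem:linear}) we have $S_1^H=\emptyset$, so every nonzero linear combination $\sum l_i\frac{\partial H}{\partial x_i}$ has rank $\ge 2$; in fact for a smooth ternary cubic all such quadrics are nondegenerate, giving rank exactly $3$. By contrast, Lemma~\ref{lem:hyp} guarantees that after a suitable invertible change of coordinates on the $K$-block we may assume ${\rm rk}(D_1^{\partial K/\partial x_i})\le 3$ for the generators, and more generally the relevant ranks coming from $K$ lie in a controlled set. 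The strategy is therefore to compare ranks column by column: for $1\le j\le 3$ the left side $\frac{\partial H}{\partial x_j}$ is a rank-$3$ quadric supported on three variables, whereas any genuine contribution from the $K$-block would force the rank pattern on the right to be incompatible, and symmetrically for $4\le j\le 7$. This forces, for each fixed $j$, either $a_{1j}=a_{2j}=a_{3j}=0$ or $a_{4j}=\dots=a_{7j}=0$, i.e.\ no column mixes the two blocks.

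Once the columns are sorted into the two blocks, invertibility of $A$ forbids the off-diagonal blocks from both being nonzero in a crossing pattern. Because $\dim H$-block $=3\ne 4=\dim K$-block, a column carried by $H$-variables cannot land in the $K$-block (ranks $3$ versus the $K$-ranks do not match), so the map must preserve each block setwise rather than swap them—this is precisely where the asymmetry of the sizes rules out the swap option that appeared in Proposition~\ref{prop:1+3+3}. Hence $A$ is block-diagonal with an upper-left $3\times 3$ block $B$ and a lower-right $4\times 4$ block $C$, which is the claim. I expect the main obstacle to be making the rank comparison fully rigorous for the $4$-variable block: a general nonzero element of $S_1^K\cup S_2^K\cup S_3^K$ can have rank $2$ or $3$, so Lemma~\ref{lem:hyp} must be applied carefully to show that the possible rank values on the $K$-side genuinely cannot impersonate the rank-$3$ quadrics coming from $H$ when a column straddles both blocks. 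Verifying that this rank bookkeeping closes off every mixing possibility is the delicate step.
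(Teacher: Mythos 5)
Your overall framework matches the paper's: use the rank invariance of Theorem \ref{thm:dm}, the fact that $S_1^H=S_1^K=\emptyset$, and Lemma \ref{lem:hyp} to normalize the $K$-block, then show column by column that either $a_{1j}=a_{2j}=a_{3j}=0$ or $a_{4j}=\cdots=a_{7j}=0$. That dichotomy is correct (the mechanism, which you should state explicitly, is that for polynomials in disjoint variable sets the ranks add, so a total rank of $3$ with both summands of rank $\neq 1$ forces one summand to vanish). One small factual slip: it is not true that every nonzero combination $\sum l_i\,\partial H/\partial x_i$ of a smooth ternary cubic has rank exactly $3$ --- the determinant of the associated symmetric matrix is the Hessian, which vanishes on a curve, so rank $2$ does occur. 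What you actually need, and what the paper uses, is only that the specific partials $\partial H/\partial x_i$ have rank $3$ and that no nonzero combination has rank $1$.

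The genuine gap is in your last step. You claim the mixing configuration is excluded ``because $3\neq 4$'' and the ranks ``do not match,'' but a dimension count does not rule it out: one can have columns $1,2,3$ supported in rows $4$--$7$ (three independent vectors in $\C^4$), columns $4,5,6$ supported in rows $1$--$3$, and column $7$ supported in rows $4$--$7$, and such a matrix is perfectly invertible and consistent with the column-by-column rank constraints. The paper eliminates this case by a different and essential argument: writing $H=x_1^3+x_2^3+x_3^3+\lambda x_1x_2x_3$ with $\lambda\neq 0$ (unpartitionability of $H$), it first uses the monomial $\lambda x_1x_2x_3$ to force all three of columns $1,2,3$ into the \emph{same} block, and then shows that if that block is the $K$-block, substituting back yields $K=\tilde{x}_4^3+\tilde{x}_5^3+\tilde{x}_6^3+\lambda\tilde{x}_4\tilde{x}_5\tilde{x}_6+\alpha\tilde{x}_7^3$ in suitable coordinates, i.e.\ $K$ would have a $(3,1)$-type partition, contradicting the unpartitionability of $K$. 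Your proposal never uses the unpartitionability of $K$ beyond $S_1^K=\emptyset$ at this stage, and without an argument of this kind the mixing case is not closed off; you correctly flagged this as the delicate step, and it is exactly where the proof is incomplete.
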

	\begin{proof}
		By assumption, $S_1^H=S_1^K=\emptyset$ (see Remark \ref{rem:m345}). As above, we may assume $H(x_1,x_2,x_3)=x_1^3+x_2^3+x_3^3+\lambda x_1x_2x_3$, $\lambda\neq0$.
		Then we have ${\rm rk}(D_1^\frac{\partial H}{\partial x_i})=3$, $i=1,2,3$.
		
		By Lemma \ref{lem:hyp}, we may assume that ${\rm rk}(D_1^\frac{\partial K}{\partial x_i})\leq 3$, $i=4,5,6$.
		
		Suppose $A=(a_{ij})_{1\leq i,j\leq 7}\in G_F$. Since $F(x_1,\dots,x_7)=F(\sum_{i=1}^{7}a_{1i}x_i,\dots,\sum_{i=1}^{7}a_{7i}x_i)$, we have 
		\begin{center}
			${\rm rk}(D_1^\frac{\partial F}{\partial x_i})={\rm rk}(D_1^{\frac{\partial F}{\partial x_1}a_{1i}+\cdots+\frac{\partial F}{\partial x_7}a_{7i}})$, $\forall i=1,\dots,7$.
		\end{center}
		Then
		\begin{equation*}
			\begin{aligned}
				3={\rm rk}(D_1^\frac{\partial F}{\partial x_1})
				&={\rm rk}(D_1^{\frac{\partial H}{\partial x_1}a_{11}+\cdots+\frac{\partial H}{\partial x_3}a_{31}+\frac{\partial K}{\partial x_4}a_{41}+\cdots+\frac{\partial K}{\partial x_7}a_{71}})\\
				&={\rm rk}(D_1^{\frac{\partial H}{\partial x_1}a_{11}+\cdots+\frac{\partial H}{\partial x_3}a_{31}})+{\rm rk}(D_1^{\frac{\partial K}{\partial x_4}a_{41}+\cdots+\frac{\partial K}{\partial x_7}a_{71}}).
			\end{aligned}
		\end{equation*}
		Thus, by $S_1^H = S_1^K =\emptyset$ and smoothness	of $H, K$, we have either (1) $a_{11}=a_{21}=a_{31}=0$ or (2) $a_{41}=a_{51}=a_{61}=a_{71}=0$.
		
		For Case (1), by similar arguments, we have either $a_{12}=a_{22}=a_{32}=0$ or $a_{42}=a_{52}=a_{62}=a_{72}=0$. We define $$\tilde{H}:=H(\sum_{i=1}^{7}a_{1i}x_i,\sum_{i=1}^{7}a_{2i}x_i,\sum_{i=1}^{7}a_{3i}x_i),$$ and $$\tilde{K}:=K(\sum_{i=1}^{7}a_{4i}x_i,\sum_{i=1}^{7}a_{5i}x_i,\sum_{i=1}^{7}a_{6i}x_i,\sum_{i=1}^{7}a_{7i}x_i).$$
		Since $\tilde{H}+\tilde{K}=H(x_1,x_2,x_3)+K(x_4,x_5,x_6,x_7)$ and $a_{11}=a_{21}=a_{31}=0$, it follows that $\tilde{H}$ has no monomial involving $x_1$. If $a_{42}=a_{52}=a_{62}=a_{72}=0$, then there is no monomial involving $x_2$ in $\tilde{K}$, which contradicts $\lambda x_1x_2x_3\in\tilde{H}+\tilde{K}$. Thus $a_{12}=a_{22}=a_{32}=0$. Similarly, $a_{13}=a_{23}=a_{33}=0$. 
		
		Since $A$ is invertible, and ${\rm rk}(D_1^\frac{\partial F}{\partial x_i})\leq 3$, $i=4,5,6$, we infer that $a_{4i}=a_{5i}=a_{6i}=a_{7i}=0$, $i=4,5,6$. Again, by $\tilde{H}+\tilde{K}=H+K$, we have $$K(x_4,\dots,x_7)=(a_{14}x_4+\cdots+a_{17}x_7)^3+(a_{24}x_4+\cdots+a_{27}x_7)^3+(a_{34}x_4+\cdots+a_{37}x_7)^3$$$$+\lambda (a_{14}x_4+\cdots+a_{17}x_7)(a_{24}x_4+\cdots+a_{27}x_7)(a_{34}x_4+\cdots+a_{37}x_7)+K(a_{47}x_7,a_{57}x_7,a_{67}x_7,a_{77}x_7).$$
		It is clear that the last term on the right hand side of the equation is equal to $\alpha x_7^3$, $\alpha \in\C$. Then under the linear change of coordinates:
		$$(\tilde{x_4},\tilde{x_5},\tilde{x_6},\tilde{x_7})^T=\begin{pmatrix}a_{14}&a_{15}&a_{16}&a_{17}\\a_{24}&a_{25}&a_{26}&a_{27}\\a_{34}&a_{35}&a_{36}&a_{37}\\0&0&0&1\\\end{pmatrix}\cdot(x_4,x_5,x_6,x_7)^T,$$
		we have $K(x_4,x_5,x_6,x_7)=\tilde{x_4}^3+\tilde{x_5}^3+\tilde{x_6}^3+\lambda\tilde{x_4}\tilde{x_5}\tilde{x_6}+\alpha \tilde{x_7}^3,$ which leads to a contradiction to the condition $K$ is unpartitionable. Therefore, Case (1): $a_{11}=a_{21}=a_{31}=0$ is impossible.
		
		For Case (2), by similar arguments as in Case (1), we can deduce that  $a_{4i}=a_{5i}=a_{6i}=a_{7i}=0$ for $i=1,2,3$ and $a_{1j}=a_{2j}=a_{3j}=0$ for $j=4,5,6$. Then by $A(F)=F$ and direct computation, we have that $a_{17}=a_{27}=a_{37}=0$. Thus, $A$ is of the desired shape. This completes the proof of the lemma.
	\end{proof}

	 Following \cite{OY19}, we recall some definitions about liftability of group actions. Let $F$ be a homogeneous polynomial of degree $d$.  Let $G$ (resp. $\widetilde{G}$) be a finite subgroup of $\PGL(m,\mathbb{C})$ (resp. $\GL(m,\mathbb{C})$). We say  $\widetilde{G}$ is a {\it lifting} of $G$ if $\widetilde{G}$ and $G$ are isomorphic via the natural projection $\pi: \GL(m,\C)\rightarrow \PGL(m,\C).$ We say  $\widetilde{G}$ is an $F$-{\it lifting} of $G$ if $\widetilde{G}$ is a lifting of $G$ and $A(F)=F$, for all $A$ in $\widetilde{G}$. 
	 
	 \begin{lemma}\label{lem:exactseq}
	 Let $F=F(x_1,...,x_m)$ be a smooth form of degree $d$, where $m\ge 4$, $d\ge 3$, $(m,d)\neq (4,4)$. Then ${\rm Aut}(X_F)=\pi (G_F)\subset \PGL(m,\C)$ and there is a short exact sequence of finite groups $$1\rightarrow N \xrightarrow{i} G_F \xrightarrow{\pi| G_F} {\rm Aut}(X_F)\rightarrow 1,$$
	 where $N=\langle \xi_d I_m \rangle\cong C_d$, $i$ is the natural inclusion map, and $\pi | G_F$ is the restriction of $\pi$ to $G_F$.
	 \end{lemma}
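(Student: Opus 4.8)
The plan is to prove both assertions together by exhibiting the precise relationship between $G_F$, $\Aut(X_F)$, and the scalar matrices. First I would recall the fundamental result of Matsumura--Monsky (cited in the introduction), which applies precisely because $m\ge 4$, $d\ge 3$, and $(m,d)\neq(4,4)$: for such a smooth form $F$, every projective automorphism of $X_F$ is induced by a linear transformation preserving $F$ up to scalar, and moreover $\Aut(X_F)=\{[A]\in\PGL(m,\C)\mid A(X_F)=X_F\}$. The key point is that if $[A]$ preserves $X_F$, then $A(F)=\mu F$ for some $\mu\in\C^\times$; so the task reduces to showing we can always rescale $A$ so that it fixes $F$ on the nose, and to identifying exactly which rescalings accomplish this.

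The central computation is as follows. Suppose $A\in\GL(m,\C)$ satisfies $A(F)=\mu F$ for some $\mu\in\C^\times$. I would replace $A$ by $\lambda A$ for a suitable scalar $\lambda$; since $F$ is homogeneous of degree $d$, one checks from the definition in (2.1) that $(\lambda A)(F)=\lambda^d\,A(F)=\lambda^d\mu F$. Choosing $\lambda$ to be a $d$-th root of $\mu^{-1}$ yields $(\lambda A)(F)=F$, i.e. $\lambda A\in G_F$. This shows that the map $\pi|_{G_F}:G_F\to\Aut(X_F)$ is surjective: every class $[A]\in\Aut(X_F)$ has a representative in $G_F$. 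Combined with the Matsumura--Monsky description, this gives $\Aut(X_F)=\pi(G_F)$.

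Next I would identify the kernel $N=\Ker(\pi|_{G_F})$. An element $A\in G_F$ lies in this kernel precisely when $[A]$ is the identity in $\PGL(m,\C)$, i.e. when $A=cI_m$ is a scalar matrix. For a scalar $A=cI_m$, the definition in (2.1) gives $A(F)=F(cx_1,\dots,cx_m)=c^d F$, so $A\in G_F$ forces $c^d=1$, i.e. $c$ is a $d$-th root of unity. Conversely every such scalar lies in $G_F$ and in the kernel. Hence $N=\{cI_m\mid c^d=1\}=\langle\xi_d I_m\rangle\cong C_d$, and the inclusion $i:N\hookrightarrow G_F$ together with $\pi|_{G_F}$ assembles into the short exact sequence
$$
1\rightarrow N \xrightarrow{i} G_F \xrightarrow{\pi|_{G_F}} \Aut(X_F)\rightarrow 1.
$$

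The only genuinely nontrivial input is the surjectivity argument, and its subtlety is entirely imported: it rests on the Matsumura--Monsky theorem guaranteeing that any projective automorphism of $X_F$ lifts to a linear map carrying $F$ to a scalar multiple of itself. The hypotheses $m\ge 4$, $d\ge 3$, $(m,d)\neq(4,4)$ are exactly the range in which this lifting and the identification of $\Aut(X_F)$ with linear symmetries are known to hold, so I expect the main (and only real) obstacle to be invoking that classical result correctly for the excluded-case bookkeeping; the root-of-unity rescaling and the kernel computation are then routine consequences of the homogeneity of $F$.
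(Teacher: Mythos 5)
Your proposal is correct and follows essentially the same route as the paper: the paper likewise invokes Matsumura--Monsky for the identity $\Aut(X_F)=\pi(G_F)$ and then computes the kernel of $\pi|_{G_F}$ exactly as you do, by noting that a scalar matrix $\lambda I_m$ fixes $F$ iff $\lambda^d=1$. Your explicit rescaling argument (replacing $A$ with $\lambda A$ where $\lambda^d=\mu^{-1}$) just spells out what the paper leaves implicit in its citation of \cite{MM63}.
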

	 
	 \begin{proof}
	 By \cite{MM63}, ${\rm Aut}(X_F)=\pi (G_F)$ is a finite group. Clearly $N\subseteq {\rm Ker}(\pi| G_F)$. On the other hand, if $A\in {\rm Ker}(\pi| G_F)$, then $A=\lambda I_m$ for some $\lambda \in \C$, and $F=A(F)=\lambda^d F$. Then $\lambda^d=1$ and $A\in N$. Thus, $N={\rm Ker}(\pi| G_F)$.
	 \end{proof}
	 
	 The following result gives a useful relation among partitionability, $F$-liftability, and structure of automorphism groups of hypersurfaces.
	 
\begin{proposition}\label{thm:relation}
	 Let $F=F(x_1,...,x_m)$ be a smooth form of degree $d$ with $F=H(x_1,...,x_k)+K(x_{k+1},...,x_m)$, where $m\ge 5$, $d\ge 3$, $4\le k\le m-1$, and $(k,d)\neq (4,4)$. Suppose the following statements hold:
	 \begin{enumerate}
  \item  [(1)] ${\rm Aut}(X_H)$ admits an $H$-lifting $\widetilde{{\rm Aut}(X_H)}$;
  \item [(2)] $G_F \subset\{\begin{pmatrix}B&0\\0&C\end{pmatrix}\mid B\in \GL(k,\C), C\in \GL(m-k,\C) \}$.
\end{enumerate}

	 Then ${\rm Aut}(X_F)$ is isomorphic to ${\rm Aut}(X_H)\times G_{K}$ and ${\rm Aut}(X_F)$ has an $F$-lifting $\widetilde{G}$, where 
	 $$\widetilde{G}:=\{\begin{pmatrix}B&0\\0&C\end{pmatrix}\mid B\in \widetilde{{\rm Aut}(X_H)}, C\in G_K\}.$$
	 \end{proposition}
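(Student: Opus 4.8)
The plan is to determine $G_F$ completely and then to construct the claimed $F$-lifting $\widetilde G$ explicitly, with the structure isomorphism for $\mathrm{Aut}(X_F)$ falling out as a consequence. First I would pin down $G_F$. Hypothesis (2) says every $A\in G_F$ has the form $\begin{pmatrix}B&0\\0&C\end{pmatrix}$ with $B\in\GL(k,\C)$ and $C\in\GL(m-k,\C)$. Expanding $A(F)=F$ then reads $B(H)+C(K)=H+K$; since $H$ involves only $x_1,\dots,x_k$ and $K$ only $x_{k+1},\dots,x_m$, matching monomials forces $B(H)=H$ and $C(K)=K$, that is $B\in G_H$ and $C\in G_K$. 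The reverse inclusion is immediate, so
\[
G_F=\{\begin{pmatrix}B&0\\0&C\end{pmatrix}\mid B\in G_H,\ C\in G_K\}\cong G_H\times G_K .
\]

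Next I would exploit hypothesis (1). Write $\widetilde A:=\widetilde{\mathrm{Aut}(X_H)}\subset\GL(k,\C)$ and $N_H:=\langle\xi_d I_k\rangle$. By definition of an $H$-lifting, $\widetilde A\subset G_H$ and $\pi|_{\widetilde A}\colon\widetilde A\to\mathrm{Aut}(X_H)$ is an isomorphism; since $k\ge4$, $d\ge3$ and $(k,d)\neq(4,4)$, Lemma~\ref{lem:exactseq} applied to $H$ gives $\ker(\pi|_{G_H})=N_H$ and $\mathrm{Aut}(X_H)=\pi(G_H)$. Consequently $G_H=N_H\widetilde A$ with $N_H\cap\widetilde A=\{I_k\}$, and since $N_H$ is central this is an internal direct product $G_H=N_H\times\widetilde A$ with $\widetilde A\cong\mathrm{Aut}(X_H)$.

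The heart of the argument is to verify that
\[
\widetilde G:=\{\begin{pmatrix}B&0\\0&C\end{pmatrix}\mid B\in\widetilde A,\ C\in G_K\}
\]
is an $F$-lifting. That $\widetilde G\subset G_F$ and that each of its elements fixes $F$ is immediate from the first step. Setting $N:=\langle\xi_d I_m\rangle$, injectivity of $\pi|_{\widetilde G}$ follows because an element of $\widetilde G\cap N$ has $H$-block $\xi_d^j I_k\in\widetilde A\cap N_H=\{I_k\}$, forcing it to be $I_m$. The one point that genuinely requires care — and where the splitting of $G_H$ is used — is surjectivity, that is $G_F=N\widetilde G$: given $\begin{pmatrix}B'&0\\0&C'\end{pmatrix}\in G_F$, I would write $B'=\xi_d^j a$ with $a\in\widetilde A$ and redistribute the scalar between the two blocks as
\[
\begin{pmatrix}B'&0\\0&C'\end{pmatrix}=(\xi_d^j I_m)\cdot\begin{pmatrix}a&0\\0&\xi_d^{-j}C'\end{pmatrix},
\]
noting that the second factor lies in $\widetilde G$ precisely because the scalar $\xi_d^{-j}I_{m-k}$ belongs to $G_K$. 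This redistribution is the main obstacle: one must recognize that the only obstruction to $\widetilde G$ surjecting onto $\mathrm{Aut}(X_F)$ is an overall $d$-th root of unity, which can be absorbed into $N$.

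Finally, since $N$ is central, $N\cap\widetilde G=\{I_m\}$ and $G_F=N\widetilde G$, I obtain an internal direct product $G_F=N\times\widetilde G$. Hence by Lemma~\ref{lem:exactseq} applied to $F$ (valid as $m\ge5$ forces $(m,d)\neq(4,4)$), $\mathrm{Aut}(X_F)=\pi(G_F)=G_F/N\cong\widetilde G$, and $\pi|_{\widetilde G}$ is the desired isomorphism exhibiting $\widetilde G$ as an $F$-lifting. Combining this with $\widetilde G\cong\widetilde A\times G_K\cong\mathrm{Aut}(X_H)\times G_K$ gives the asserted structure of $\mathrm{Aut}(X_F)$.
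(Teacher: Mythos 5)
Your proof is correct and follows essentially the same route as the paper, which simply cites Lemma \ref{lem:exactseq} together with hypotheses (1) and (2) and asserts that $\pi$ identifies $\widetilde{G}$ with $\pi(G_F)$. You have merely made explicit the details the paper leaves implicit, in particular the block decomposition $G_F\cong G_H\times G_K$ and the redistribution of the scalar $\xi_d^{j}$ between the two blocks that shows $G_F=N\widetilde{G}$.
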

	 
	 \begin{proof}
	By Lemma \ref{lem:exactseq} and the conditions (1) and (2), clearly $\widetilde{G}$ and $\pi (G_F)$ are isomorphic via the natural projection $\pi$. Thus, $\widetilde{G}$ is an $F$-lifting of ${\rm Aut}(X_F)$ and ${\rm Aut}(X_F)\cong \widetilde{G}\cong {\rm Aut}(X_H)\times G_{K}$.
		 \end{proof}
		 
		 Wei--Yu \cite[Theorem 4.11]{WY20} observed that the automorphism group of every smooth cubic threefold is $F$-liftable. Following a similar approach, Gonz\'{a}lez-Aguilera--Liendo--Montero \cite{GLM23} generalized this result to other hypersurfaces.
	
	\begin{thm}[{\cite[Theorem 3.5]{GLM23}}]\label{thm:F-liftable}
		Let $m \geq 3$ and $d \geq 3$ with $(m, d) \neq (3, 3),(4, 4)$. Then the automorphism group of every
		smooth hypersurface $X_F$ of dimension $m-2$ and degree $d$ in $\P^{m-1}$ is $F$-liftable if and only if $d$ and $m$ are relatively prime.
	\end{thm}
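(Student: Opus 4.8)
The plan is to recast $F$-liftability as the splitting of a central extension and then to detect the splitting with the determinant. By Lemma~\ref{lem:exactseq} (and \cite{MM63}) we have $\Aut(X_F)=\pi(G_F)$ together with a central extension
\[ 1\to N\to G_F\xrightarrow{\ \pi\ }\Aut(X_F)\to 1,\qquad N=\langle\xi_d I_m\rangle\cong C_d, \]
whose kernel $N$ is exactly the group of scalar matrices in $G_F$. An $F$-lifting is precisely a subgroup of $G_F$ mapped isomorphically onto $\Aut(X_F)$ by $\pi$, i.e. a complement of $N$; since $N$ is central, $\Aut(X_F)$ is $F$-liftable if and only if $N$ is a direct factor of $G_F$. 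The main tool will be the homomorphism $\det\colon G_F\to\C^{*}$, whose image is a finite, hence cyclic, group $\mu_e$ (here $\mu_k:=\langle\xi_k\rangle$), and I record that $\det(\xi_d I_m)=\xi_d^{m}$.

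For the \emph{if} direction, assume $\gcd(m,d)=1$. Then $\xi_d^{m}$ has order $d$, so $\det|_N$ is injective and $\mu_d\subseteq\mu_e$, i.e. $d\mid e$. The key point is that $e/d$ is coprime to $d$. To see this I would use that the discriminant $\Delta$ of degree-$d$ forms in $m$ variables is a relative $\GL(m,\C)$-invariant, $\Delta(A(F))=(\det A)^{w}\Delta(F)$ with $w=d(d-1)^{m-1}$, the exponent being forced by the scaling $\Delta(cF)=c^{m(d-1)^{m-1}}\Delta(F)$ (apply the law to $A=cI_m$, where $A(F)=c^{d}F$). Since $X_F$ is smooth we have $\Delta(F)\neq0$, so every $A\in G_F$ satisfies $(\det A)^{w}=1$; thus $e\mid w$. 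For a prime $p\mid d$ we have $p\nmid(d-1)$, hence $v_p(w)=v_p(d)$ (where $v_p$ denotes the $p$-adic valuation), and therefore $v_p(d)\le v_p(e)\le v_p(w)=v_p(d)$. So $v_p(e)=v_p(d)$ for every $p\mid d$, that is, $\gcd(d,e/d)=1$. Now choose $t\in\Z$ with $t\equiv1\pmod d$ and $t\equiv0\pmod{e/d}$, and set $\chi:=\det^{t}\colon G_F\to\C^{*}$. A short computation then gives $\chi(G_F)=\mu_d$ and $\chi|_N\colon N\to\mu_d$ an isomorphism; hence $\ker\chi$ meets $N$ trivially and has index $d$, so it is a complement of $N$. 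Being a subgroup of $G_F$ it automatically preserves $F$, so it is the desired $F$-lifting.

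For the \emph{only if} direction I argue contrapositively: if $g:=\gcd(m,d)>1$, it suffices to exhibit a single non-liftable example, and I take the Fermat form $F=x_1^{d}+\cdots+x_m^{d}$, which is smooth and for which $G_F\supseteq T\rtimes S_m$, where $T:=\{\mathrm{diag}(\zeta_1,\dots,\zeta_m)\mid\zeta_i\in\mu_d\}$. Any homomorphism $G_F\to\C^{*}$ restricts on the torus $T$ to an $S_m$-invariant character, hence to a power of the product map $(\zeta_1,\dots,\zeta_m)\mapsto\prod_i\zeta_i$; consequently it sends $\xi_d I_m$ to $\xi_d^{\,mk}$ for some $k$, an element of order dividing $d/g<d$. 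Equivalently $(\xi_d I_m)^{d/g}$ lies in the product-one subtorus $\{\prod_i\zeta_i=1\}=[T,S_m]\subseteq[G_F,G_F]$, so the image of $\xi_d I_m$ in the abelianization $G_F^{\mathrm{ab}}$ has order strictly less than $d$. Therefore $N\cong C_d$ cannot inject into $G_F^{\mathrm{ab}}$, so it is not a direct factor and has no complement, and $\Aut(X_F)$ is not $F$-liftable.

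The one external ingredient, and the step I would be most careful about, is the transformation law of the discriminant together with its weight $w=d(d-1)^{m-1}$ and the classical equivalence ``$X_F$ smooth $\iff\Delta(F)\neq0$''; granting these, both directions are essentially formal group theory. (Alternatively, the bound $(\det A)^{w}=1$ can be extracted by diagonalising $A$ and running an eigenvalue-and-smoothness analysis in the spirit of the differential method of \cite{OY19}, but going through the discriminant avoids that combinatorics entirely.)
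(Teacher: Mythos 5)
The paper does not prove this statement: it is quoted from \cite[Theorem 3.5]{GLM23}, so there is no internal proof to compare against, and your argument should be judged as a free-standing replacement for that citation. As far as I can check it is correct. The reduction of $F$-liftability to $N=\langle\xi_d I_m\rangle$ being a direct factor of $G_F$ is right (a complement of a central subgroup is automatically normal, hence a direct complement); the arithmetic $d\mid e\mid d(d-1)^{m-1}$ forces $e/d\mid(d-1)^{m-1}$, so $\gcd(d,e/d)=1$ and your CRT character $\det^t$ does cut out a complement inside $G_F$, which preserves $F$ by definition; and the Fermat computation $(\xi_dI_m)^{d/g}\in[T,S_m]\subseteq[G_F,G_F]$ correctly obstructs a complement when $g=\gcd(m,d)>1$. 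The only external inputs are the classical discriminant facts you flag: smoothness $\Leftrightarrow\Delta(F)\neq0$, irreducibility of the discriminant hypersurface (so that $\Delta(A(F))=\chi(A)\Delta(F)$ for a character $\chi$, necessarily a power of $\det$), and Boole's degree formula $\deg\Delta=m(d-1)^{m-1}$, which pins the weight at $w=d(d-1)^{m-1}$. Two cosmetic points: for $m=3$, $d\ge4$ you need the plane-curve analogue of Lemma \ref{lem:exactseq}, which the paper only states for $m\ge4$ (same proof); and you should say explicitly why $\Delta$ is a relative invariant at all before computing its weight. Your route --- detecting the splitting of the central extension by a power of $\det$, with the discriminant weight controlling $\det(G_F)$, and the Fermat hypersurface as counterexample --- is close in spirit to the published argument of \cite{GLM23}, which also exploits the relative invariance of the discriminant; in any case it is a valid proof.
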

	 
	 \begin{example}\label{ex:X2}
	 Let $F:=H+K$, where $H=x_1^3+x_2^3+x_3^3+x_4^3$ and $K=x_5^3+x_6^3+x_7^3+3(\sqrt{3}-1)x_5 x_6 x_7$. Then we have ${\rm Aut}(X_H)\cong C_3^3\rtimes S_4$ and $G_K\cong (C_3^2\rtimes C_3)\rtimes C_4$ (see the proof of Lemma \ref{lem:3vari}). By Theorem \ref{thm:F-liftable} and Proposition \ref{prop:unp+fem}, both (1) and (2) in Proposition \ref{thm:relation} hold. Thus, we have ${\rm Aut}(X_F)\cong (C_3^3\rtimes S_4)\times ((C_3^2\rtimes C_3)\rtimes C_4)$.
	 \end{example}

	\section{Examples and reduction}\label{ss:examples}
	In this section, we give $20$ explicit examples of smooth cubic fivefolds and their automorphisms. Using results on partitionability in the previous section, we determine the automorphism groups of many of them (Theorem \ref{thm:autex}) and completely classify all possible groups acting faithfully on smooth cubic fivefolds defined by polynomials of the form $F=H(x_1,...,x_a)+K(x_{a+1},...,x_{7}), \, 2\le a\le 3$ (Theorem \ref{thm:5+2}).

	\subsection{Examples}\label{mainex}
	
	This subsection lists 20 explicit examples of smooth cubic fivefolds $X_i$ and subgroups $G_{X_i}$ of the automorphism groups ${\rm Aut}(X_i)$ ($i=1,2,...,20$). Since for $i\neq 6, 12, 15,16 ,17,18$, the generators of $G_{X_i}$ are combinations of the following three types of matrices: diagonal matrices, permutations of coordinates, $\frac{1}{\sqrt{3}}\begin{pmatrix}1&1&1\\1&\xi_3&\xi_3^2\\1&\xi_3^2&\xi_3\\ \end{pmatrix}$, we omit them here (the matrix generators of $G_{X_i}\subset \PGL(7,\C)$ for all $i$ except $i=6$ can be found in the ancillary file {\ttfamily Examples4.1.txt} to \cite{YYZ23}).
	In Section \ref{sec:5} we will prove that these 20 groups $G_{X_i}$ classify all $(5,3)$-groups  (Theorem \ref{thm:Main}).
	
	\begin{enumerate}
		\item[(1)] Let $F_{1}=x_1^3+x_2^3+x_3^3+x_4^3+x_5^3+x_6^3+x_7^3$ and $X_1=X_{F_1}$ the Fermat cubic fivefold. Then $\Aut(X_1)=G_{X_1} \cong C_3^6 \rtimes S_7$ and $|G_{X_1}|=2^4 \cdot 3^8 \cdot 5 \cdot 7=3674160$.
		
		\item[(2)] Let $F_{2}=x_1^3+x_2^3+x_3^3+3(\sqrt{3}-1)x_1 x_2 x_3+x_4^3+x_5^3+x_6^3+x_7^3$ and $X_2=X_{F_2}$. Then $G_{X_2} \cong ((C_3^2 \rtimes C_3)\rtimes C_4) \times (C_3^3 \rtimes S_4)$ is a subgroup of $\Aut(X_2)$ and $|G_{X_2}|=2^5\cdot 3^7=69984$. 
		
		\item[(3)] Let $F_{3}=x_1^2x_2+x_2^2x_3+x_3^2x_4+x_4^3+x_5^3+x_6^3+x_7^3$ and $X_3=X_{F_3}$. 
		Then $G_{X_{3}} \cong C_8 \times (C_3^3\rtimes S_3)$ is a subgroup of $\Aut(X_3)$ and $|G_{X_{3}}|=2^4 \cdot 3^4=1296$.
		
		\item[(4)] Let $X_4\subset \mathbb{P}^7$ defined by $x_1^3+x_2^3+x_3^3+x_4^3+x_5^3+x_6^3+x_7^3+x_8^3=x_1+x_2+x_3+x_4+x_5=0$. Then $G_{X_{4}} \cong S_5 \times (C_3^3 \rtimes S_3)$ is a subgroup of $\Aut(X_{4})$ and $|G_{X_{4}}|=2^4\cdot 3^5 \cdot 5=19440$.
		
		\item[(5)] Let $F_5=x_1^2x_2+x_2^2x_3+x_3^2x_4+x_4^2x_5+x_5^3+x_6^3+x_7^3$ and $X_5=X_{F_5}$. 
		Then $G_{X_5} \cong C_{48} \times S_3$ is a subgroup of $\Aut(X_5)$ and $|G_{X_5}|= 2^5 \cdot 3^2 =288$.
		
		\item[(6)] Let $F_{6}=x_1^2x_2+x_2^2x_3+x_3^2x_4+x_4^2x_5+x_5^2x_1+x_6^3+x_7^3$ and $X_6=X_{F_6}$.
		Then $G_{X_6} \cong {\rm PSL}(2,11) \times (C_3^2 \rtimes C_2)$ is a subgroup of $\Aut(X_6)$ and $|G_{X_6}|=2^3 \cdot 3^3 \cdot 5 \cdot 11=11880 $.
		
		\item[(7)] Let $F_{7}=x_1^3+x_2^3+x_3^3+3(\sqrt{3}-1)x_1 x_2 x_3+x_4^3+x_5^3+x_6^3+3(\sqrt{3}-1)x_4 x_5 x_6+x_7^3$ and $X_7=X_{F_7}$. Then $G_{X_{7}} \cong (((C_3^2 \rtimes C_3)\rtimes C_4) \times ((C_3^2 \rtimes C_3)\rtimes C_4))\rtimes C_2 $ is a subgroup of $\Aut(X_{7})$ and $|G_{X_{7}}|=2^5\cdot 3^6=23328$.
		
		\item[(8)] Let $F_{8}=x_1^2x_2+x_2^2x_3+x_3^2x_4+x_4^3+x_5^3+x_6^3+x_7^3+3(\sqrt{3}-1)x_5 x_6 x_7$ and $X_8=X_{F_8}$. Then $G_{X_{8}} \cong ((C_3^2 \rtimes C_3)\rtimes C_4) \times C_8$ is a subgroup of $\Aut(X_{8})$ and $|G_{X_{8}}|= 2^5 \cdot 3^3=864$.
		
		\item[(9)] Let $X_9\subset \mathbb{P}^7$ defined by $x_1^3+x_2^3+x_3^3+x_4^3+x_5^3+x_6^3+x_7^3+x_8^3+3(\sqrt{3}-1)x_6 x_7 x_8=x_1+x_2+x_3+x_4+x_5=0$.  Then $G_{X_{9}} \cong S_5 \times ((C_3^2 \rtimes C_3)\rtimes C_4)$ is a subgroup of $\Aut(X_{9})$ and $|G_{X_{9}}|=2^5\cdot 3^4 \cdot 5=12960 $.
		
		\item[(10)]	Let $F_{10}=x_1^2x_2+x_2^2x_3+x_3^2x_4+x_4^2x_5+x_5^2x_6+x_6^3+x_7^3$ and $X_{10}=X_{F_{10}}$. 
		Then $G_{X_{10}} \cong C_{96}$ is a subgroup of $\Aut(X_{10})$ and $|G_{X_{10}}|=2^5 \cdot 3 = 96$.
		
		\item[(11)]	Let $F_{11}=x_1^2x_2+x_2^2x_3+x_3^2x_4+x_4^2x_5+x_5^2x_6+x_6^2x_1+x_7^3$ and $X_{11}=X_{F_{11}}$. Then $G_{X_{11}} \cong C_{63} \rtimes C_6$ is a subgroup of $\Aut(X_{11})$ and $|G_{X_{11}}|=2 \cdot 3^3 \cdot 7=378 $.
		
		\item[(12)] Let $F_{12}=(x_1^3+x_2^3+x_3^3+x_4^3+x_5^3+x_6^3+x_7^3)+1/5(-3\xi_{24}^7-3\xi_{24}^5+3\xi_{6}-3\xi_{8}+6\xi_{24}-3)\cdot(x_1x_2x_3+x_1x_2x_4+(\xi_{6}-1)x_1x_2x_5+x_1x_2x_6+(\xi_{6}-1)x_1x_3x_4+x_1x_3x_5+x_1x_3x_6+(\xi_{6}-1)x_1x_4x_5-\xi_{6}x_1x_4x_6-\xi_{6}x_1x_5x_6+(\xi_{6}-1)x_2x_3x_4+(\xi_{6}-1)x_2x_3x_5-\xi_{6}x_2x_3x_6+x_2x_4x_5+x_2x_4x_6-\xi_{6}x_2x_5x_6+x_3x_4x_5-\xi_{6}x_3x_4x_6+x_3x_5x_6+x_4x_5x_6)$ and $X_{12}=X_{F_{12}}$.  Then $G_{X_{12}} \cong C_3 . M_{10}$ is a subgroup $\Aut(X_{12})$ and $|G_{X_{12}}|=2^4\cdot 3^3 \cdot 5=2160$ (see \cite{HM19}). Here $M_{10}$ is the Mathieu group of order 720.
		
		\item[(13)] Let $X_{13}\subset \mathbb{P}^7$ defined by $x_1^3+x_2^3+x_3^3+x_4^3+x_5^3+x_6^3+x_7^3+x_8^3=x_1+x_2+x_3+x_4+x_5+x_6+x_7=0$. Then $G_{X_{13}} \cong S_7 \times C_3$ is a subgroup of $\Aut(X_{13})$ and $|G_{X_{13}}|=2^4\cdot 3^3 \cdot 5 \cdot 7=15120$.
		
		\item[(14)]	Let $F_{14}=x_1^2x_2+x_2^2x_5+x_3^2x_4+x_4^2x_5+x_5^2x_6+x_2x_4x_6+x_6^3+x_7^3$ and $X_{14}=X_{F_{14}}$.
		Then $G_{X_{14}} \cong C_3 \times ((C_8 \times C_2)\rtimes C_2)$ is a subgroup of $\Aut(X_{14})$ and $|G_{X_{14}}|=2^5\cdot 3=96 $.
		
		\item[(15)]	Let $F_{15} =x_1^3+8x_2^3+8(-5+4\sqrt{2})x_2x_3^2+2\xi_4(-11+6\sqrt{2})x_3(x_4^2+x_5^2)-4\xi_4x_2((-5+4\sqrt{2})x_4x_5+2(-3+\sqrt{2})x_6x_7)+(1+\xi_4)(-12+11\sqrt{2})(x_5x_6^2-x_4x_7^2)$ and $X_{15}=X_{F_{15}}$. 
		Let $G_{X_{15}}$ be the subgroup of PGL$(7,\C)$ generated by $\Diag(\xi_3,1,-1,\xi_4^3,\xi_4,\xi_8^7,\xi_8)$ and
		$$
		\footnotesize{
			\begin{pmatrix}
				1 & 0 & 0 & 0 & 0 & 0 & 0\\
				0 & -\frac{\sqrt{2}}{4} & 0 & \frac{-3+\sqrt{2}}{8} & \frac{-3+\sqrt{2}}{8}\xi_4 & -\frac{1}{8}+\frac{\xi_4}{4}+\frac{3+\xi_4}{8\sqrt{2}} &-\frac{1}{4}+\frac{\xi_4}{8}-\frac{1+3\xi_4}{8\sqrt{2}}\\
				0 & 0 &\frac{\sqrt{2}}{4} &\frac{3+\sqrt{2}}{8}\xi_4 & \frac{3+\sqrt{2}}{8} & \frac{1}{8}-\frac{\xi_4}{4}+\frac{3+\xi_4}{8\sqrt{2}} & \frac{1}{4}-\frac{\xi_4}{8}-\frac{1+3\xi_4}{8\sqrt{2}}\\
				0 & \frac{1}{2} &\frac{\xi_4}{2} & -\frac{1}{2} &  -\frac{\sqrt{2}}{4}\xi_4 & -\frac{\xi_4}{4}-\frac{\xi_8}{4} & -\frac{\xi_4}{4}+\frac{\xi_8^3}{4}\\
				0 & -\frac{\xi_4}{2} &-\frac{1}{2} &  \frac{\sqrt{2}}{4}\xi_4 & -\frac{1}{2} & -\frac{\xi_4}{4}+\frac{\xi_8}{4} & -\frac{\xi_4}{4}-\frac{\xi_8^3}{4}\\
				0 & \frac{1}{2}+\frac{\xi_8}{2} & -\frac{1}{2}+\frac{\xi_8 }{2}&  \frac{1}{4}+\frac{\xi_8}{4} &\frac{1}{4}-\frac{\xi_8}{4} & \frac{\xi_4}{2} & 0\\
				0 & \frac{\xi_4}{2}+\frac{\xi_8}{2} & -\frac{\xi_4}{2}+\frac{\xi_8}{2} &  -\frac{1}{4}-\frac{\xi_8^3}{4} &-\frac{1}{4}+\frac{\xi_8^3}{4} & 0 & -\frac{\xi_4}{2}
			\end{pmatrix}
		.}
		$$
		Then $G_{X_{15}} \cong C_3 \times ({\rm PSL}(3,2) \rtimes C_2)$ is a subgroup of $\Aut(X_{15})$ and $|G_{X_{15}}|=2^4\cdot 3^2\cdot 7= 1008$. 
		
		\item[(16)]	 Let $F_{16}=x_1^3+x_2^3+x_3^3+\frac{12}{5}x_1x_2x_3+x_1x_4^2+x_2x_5^2+x_3x_6^2+\frac{4\sqrt{15}}{9}x_4x_5x_6+x_7^3$ and $X_{16}=X_{F_{16}}$. Let $G_{X_{16}}$ be the subgroup of PGL$(7,\C)$ generated by 
		$A_{X_{16},1}:=\Diag(1,\xi_3,\xi_3^2,-1,\xi_3,-\xi_3^2,1)$ and $A_{X_{16},2}:=\begin{pmatrix}A_{X_{16},2}'&0\\0&1\end{pmatrix}$, where 
		$$A_{X_{16},2}'=
		\begin{footnotesize}
			\begin{pmatrix}
				 \frac{1}{2} & \frac{1}{2} & \frac{1}{2} & \frac{\sqrt{15}}{18} & \frac{\sqrt{15}}{18} &\frac{\sqrt{15}}{18}\\
				 \frac{1}{2} & \frac{\xi_3}{2} & -\frac{\xi_6}{2} & \frac{\sqrt{15}}{18} & \frac{\sqrt{15}}{18}\xi_3 & -\frac{3\sqrt{5}\xi_4+\sqrt{15}}{36}\\
				\frac{1}{2} & -\frac{\xi_6}{2} & \frac{\xi_3}{2} & \frac{\sqrt{15}}{18} & -\frac{3\sqrt{5}\xi_4+\sqrt{15}}{36} & \frac{\sqrt{15}}{18}\xi_3\\
				 \frac{\sqrt{15}}{10} & \frac{\sqrt{15}}{10}& \frac{\sqrt{15}}{10} & -\frac{1}{2} &-\frac{1}{2} &-\frac{1}{2}\\
				 \frac{\sqrt{15}}{10} & \frac{\sqrt{15}}{10}\xi_3 &-\frac{\sqrt{15}}{10}\xi_6 & -\frac{1}{2} & -\frac{\xi_3}{2}&\frac{\xi_6}{2}\\
				 \frac{\sqrt{15}}{10} & -\frac{\sqrt{15}}{10}\xi_6 &\frac{\sqrt{15}}{10}\xi_3 &-\frac{1}{2} & \frac{\xi_6}{2}&-\frac{\xi_3}{2}\\
			\end{pmatrix}.
		\end{footnotesize}
		$$
		Then $G_{X_{16}} \cong C_3 .A_7$ is a subgroup of $\Aut(X_{16})$ and $|G_{X_{16}}|=2^4\cdot 3^2\cdot 5 \cdot 7= 7560$.

		\item[(17)] Let $F_{17}=x_1^3+x_2^3+x_2x_5^2-\frac{2}{3}x_2 x_5 x_7 + x_2 x_7^2 - \frac{2\xi_6}{3} x_2 x_5 x_6  - 
		\frac{2\xi_6}{3} x_2 x_6 x_7  + (-1 + \xi_6)x_2 x_6^2 +
		x_3^2 x_5 - x_3 x_4 x_5 + x_4^2 x_5 + x_4^2 x_7 + 3\xi_6 x_3 x_4 x_6  + 
		(-1 + \xi_{24} - \xi_8 -  \xi_{24}^5)x_3^2 x_7  + 
		(-1 - 2 \xi_{24} + 2 \xi_8 + 2  \xi_{24}^5) x_3 x_4 x_7  + 
		( \xi_{24} - \xi_6 -  \xi_{24}^7)x_4^2 x_6  + 
		(- \xi_{24} - \xi_6 + \xi_{24}^7)x_3^2 x_6 + x_5^3 - x_6^3 - x_5^2 x_7 - 	x_5 x_7^2 + x_7^3 - \xi_6 x_5^2 x_6+ 2 \xi_6 x_5 x_6 x_7 - 
		\xi_6 x_6 x_7^2 +  (1 - \xi_6) x_5 x_6^2 + (1 - \xi_6)x_6^2 x_7$ and $X_{17}=X_{F_{17}}$. 
		Let $G_{X_{17}}$ be the subgroup of PGL$(7,\C)$ generated by the following three matrices:
		\begin{center}
			\footnotesize{$
				\begin{pmatrix}
					1 & 0 & 0 & 0 & 0 & 0 & 0\\
					0 & 1 & 0 & 0 & 0 & 0 &0\\
					0 & 0 &1-\xi_8-\xi_8^3 &-2& 0 & 0&0\\
					0 & 0 &-1-\xi_8-\xi_8^3 & -1+\xi_8+\xi_8^3&  0 & 0& 0\\
					0 & 0&0 &  0 & 1 & \xi_3^2 & 0\\
					0 & 0 & 0&  0 &0 & -1 & 0\\
					0 & 0 & 0 &  0 &0 & \xi_3^2 & 1 
				\end{pmatrix}$, $
				\begin{pmatrix}
					1 & 0 & 0 & 0 & 0 & 0 & 0\\
					0 & \xi_3 & 0 & 0 & 0 & 0 & 0\\
					0 & 0 & 0 &-\xi_3 & 0 & 0&0\\
					0 & 0 & \xi_3& -\xi_3 &  0 & 0& 0\\
					0 & 0 &0 &  0 & \xi_3 & 1 & 0\\
					0 & 0 & 0&  0 &0 & -\xi_3 & -\xi_{3}^2\\
					0 & 0 & 0 &  0 & 0 & 1 & 0 
				\end{pmatrix}$, $
				\begin{pmatrix}
					1 & 0 & 0 & 0 & 0 & 0 & 0\\
					0 & 1 & 0 & 0 & 0 & 0 & 0\\
					0 & 0 & 1 & -\xi_8-\xi_8^3& 0 & 0&0\\
					0 & 0 & -\xi_8-\xi_8^3 & -1 &  0 & 0& 0\\
					0 & 0& 0 &  0 & 0 & -\xi_3^2&-1\\
					0 & 0 & 0 &  0 &-\xi_3 & 0 & \xi_3\\
					0 & 0 & 0 & 0 & 0 & 0 & -1 
				\end{pmatrix}$.}
		\end{center}
				Then $G_{X_{17}} \cong C_3 \times \GL(2,3)$ is a subgroup of $\Aut(X_{17})$ and $|G_{X_{17}}|= 2^4\cdot 3^2 =144$.

		\item[(18)] Let $F_{18}=x_1^3 + x_2^3 + (\frac{3}{2}\xi_4 - \xi_6 + \frac{1}{2})x_2^2
		x_3 + (-\frac{1}{2}\xi_4 + \frac{1}{2}\xi_6 + \frac{1}{2}\xi_{12} - 1)x_2
		x_3^2 + (-\frac{1}{2}\xi_6 - \frac{1}{2}\xi_{12} + \frac{1}{2})x_3^3 + (\xi_4 - 2\xi_6 - \xi_{12} + 2)x_2^2
		x_4 + (2\xi_{12} - 1)x_2x_3x_4 + (\frac{1}{2}\xi_4 + \frac{1}{2}\xi_6 - \frac{1}{2}\xi_{12})x_3^2
		x_4 + (\xi_4 - 2\xi_6 + 1)x_2x_4^2 + (-\frac{3}{2}\xi_4 + \xi_6 + \xi_{12} - \frac{1}{2})x_3
		x_4^2 + (-\frac{1}{2}\xi_6 + \frac{1}{2}\xi_{12} - \frac{1}{2})x_4^3 + (\xi_4 + \xi_6 - 1)x_2^2
		x_5 + (-\xi_4 - \xi_6 + \xi_{12} - 1)x_2x_3x_5 + (-\frac{1}{2}\xi_4 - \frac{1}{2})x_3^2
		x_5 + (2\xi_{12})x_2x_4x_5 + (\xi_6 - \xi_{12} - 1)x_3x_4
		x_5 + (-\frac{3}{2}\xi_4 + \frac{1}{2}\xi_6 + \frac{3}{2}\xi_{12} - 1)x_4^2x_5 + (-\xi_6 - \xi_{12})x_2
		x_5^2 + (-\frac{1}{2}\xi_4 + \frac{1}{2}\xi_6 + \frac{1}{2}\xi_{12})x_3
		x_5^2 + (\frac{1}{2}\xi_4 + \xi_6 - \xi_{12} - \frac{1}{2})x_4x_5^2 + (-\frac{1}{2}\xi_6 + \frac{1}{2}\xi_{12} + \frac{1}{2})
		x_5^3 + (\xi_{12} - 2)x_2^2x_6 + (-\xi_4 + 2\xi_6 - 1)x_2x_3
		x_6 + (-\frac{1}{2}\xi_6 - \frac{1}{2}\xi_{12} + \frac{1}{2})x_3^2x_6 + (-2\xi_4 + 2\xi_6 + 2\xi_{12} - 2)
		x_2x_4x_6 + (\frac{1}{2}\xi_6 - \frac{1}{2}\xi_{12} + \frac{1}{2})x_4^2x_6 + (-2\xi_4)x_2x_5
		x_6 + (\xi_6 - \xi_{12})x_3x_5x_6 + (\xi_4 - \xi_6 - \xi_{12})x_4x_5
		x_6 + (-\frac{1}{2}\xi_4 + \xi_6 + \xi_{12} - \frac{1}{2})x_5^2x_6 + (\xi_6 - \xi_{12} + 1)x_2
		x_6^2 + (\xi_4 - \frac{3}{2}\xi_6 - \frac{1}{2}\xi_{12} + \frac{1}{2})x_3
		x_6^2 + (\frac{1}{2}\xi_6 - \frac{1}{2}\xi_{12} + \frac{1}{2})x_4
		x_6^2 + (\frac{3}{2}\xi_4 - \frac{1}{2}\xi_6 - \frac{3}{2}\xi_{12} + 1)x_5
		x_6^2 + (-\frac{1}{2}\xi_6 + \frac{1}{2}\xi_{12} - \frac{1}{2})x_6^3 + (\frac{1}{2}\xi_4 - \frac{3}{2}\xi_6 + \frac{1}{2}\xi_{12})
		x_2^2x_7 + (-2\xi_4 + \xi_6 + \xi_{12} - 1)x_2x_3
		x_7 + (\frac{1}{2}\xi_4 - 2\xi_6 - \xi_{12} + \frac{5}{2})x_3^2x_7 + (\xi_6 + \xi_{12} - 2)x_2x_4
		x_7 + (\xi_6 - \xi_{12} - 1)x_3x_4x_7 + (-\frac{1}{2}\xi_4 + \frac{1}{2}\xi_6 + \frac{1}{2}\xi_{12})x_4^2
		x_7 + (-2\xi_4 + 2\xi_{12})x_2x_5x_7 + (-\xi_4 + \xi_6 - \xi_{12})x_3x_5
		x_7 + (-\xi_4 - 1)x_4x_5x_7 + (\frac{3}{2}\xi_6 - \frac{1}{2}\xi_{12} - \frac{1}{2})x_5^2
		x_7 + (2\xi_6 - \xi_{12})x_2x_6x_7 + (\xi_4 - 2\xi_6 + 1)x_3x_6x_7 + (-\xi_{12} + 1)
		x_4x_6x_7 + (2\xi_4 - \xi_6 - 2\xi_{12} + 1)x_5x_6
		x_7 + (-\frac{1}{2}\xi_4 - \xi_6 + \xi_{12} - \frac{1}{2})x_6^2x_7 + (-\frac{1}{2}\xi_4 + \xi_6 - \frac{1}{2})x_2
		x_7^2 + (\frac{1}{2}\xi_4 - \frac{5}{2}\xi_6 + \frac{1}{2}\xi_{12} + 2)x_3
		x_7^2 + (\frac{1}{2}\xi_4 - \xi_{12} + \frac{1}{2})x_4x_7^2 + (-\frac{1}{2}\xi_6 - \frac{1}{2}\xi_{12} + \frac{1}{2})x_5
		x_7^2 + (-\frac{1}{2}\xi_4 - \frac{1}{2}\xi_6 + \frac{1}{2}\xi_{12})x_6
		x_7^2 + (-\frac{1}{2}\xi_6 + \frac{1}{2}\xi_{12} + \frac{1}{2})x_7^3$ and $X_{18}=X_{F_{18}}$. 
			Let $G_{X_{18}}$ be the subgroup of PGL$(7,\C)$ generated by the following three matrices:
			\begin{center}
		\footnotesize{$
			\begin{pmatrix}
				1 & 0 & 0 & 0 & 0 & 0 & 0\\
				0 & 1 & 0 & 0 & \xi_4 & -1&0\\
				0 & 0 &0 &0& 0 & 0&\xi_{12}^7\\
				0 & 0 &0 & 0 &  \xi_{12}^{11} & 0& 0\\
				0 & 0&0 &  0 & 0 & \xi_4^3 & 0\\
				0 & 0 & 0&  \xi_3 &0 & 0 & 0\\
				0 & 0 & \xi_{12}^7 &  0 &0 & 0 & -\xi_3 
			\end{pmatrix}$, $
			\begin{pmatrix}
			1 & 0 & 0 & 0 & 0 & 0 & 0\\
			0 & -\xi_3^2 & \xi_{12}^7 & 0 & 0 & \xi_3^2-\xi_{12}^{11}&-\xi_3\\
			0 & 1-\xi_4 & -\xi_{12}^7 &-\xi_3+\xi_{12}^7 &\xi_4 & 0&0\\
			0 & 0 & \xi_4& 0 &  0 & 0& -1\\
			0 & -\xi_3^2 &0 &  0 & 0 & \xi_3^2 & 0\\
			0 & -\xi_3^2-\xi_{12}^{11} & 0&  0 &0 & \xi_3^2 & -\xi_{12}^7\\
			0 & \xi_4 & \xi_3+\xi_{12}^7 &  -\xi_{12}^7 &-1 & 0 & \xi_{12}^7 
			\end{pmatrix}$, $
			\begin{pmatrix}
			1 & 0 & 0 & 0 & 0 & 0 & 0\\
			0 & -\xi_3^2 & \xi_4 & -\xi_3 & 0 & 0 &\xi_4\\
			0 & -1-\xi_4&-\xi_3 &\xi_3+\xi_{12}^7& 1 & 0&0\\
			0 & -\xi_3 & \xi_4^3 & -1 &  0 & 0& \xi_4^3\\
			0 & 0& -\xi_{12}^{11} &  0 & 0 & 0 & \xi_3^2-\xi_{12}^{11}\\
			0 & 1 & -1+\xi_4&  -\xi_3 &\xi_4 & \xi_3 & -1\\
			0 & 1 & 0 & -\xi_3 & -1 & 0 & -\xi_3 
			\end{pmatrix}$.}
					\end{center}
		Then $G_{X_{18}} \cong (((C_3 \times C_3)\rtimes  C_3)\rtimes Q_8)\rtimes C_3$  is a subgroup of $\Aut(X_{18})$ and $|G_{X_{18}}|= 2^3\cdot 3^4 =648$.
		
		\item[(19)] Let $F_{19}=x_1^2x_2+x_2^2x_3+x_3^2x_4+x_4^2x_5+x_5^2x_6+x_6^2x_7+x_7^3$ and $X_{19}=X_{F_{19}}$. 
		Then $G_{X_{19}} \cong C_{64}$ is a subgroup of $\Aut(X_{19})$ and $|G_{X_{19}}|= 2^6 =64$.

		\item[(20)]	 Let $F_{20}=x_1^2x_2+x_2^2x_3+x_3^2x_4+x_4^2x_5+x_5^2x_6+x_6^2x_7+x_7^2x_1$ and $X_{20}=X_{F_{20}}$ the Klein cubic fivefold. Then $G_{X_{20}} \cong C_{43} \rtimes C_7$ is a subgroup of $\Aut(X_{20})$ and $|G_{X_{20}}|=7 \cdot 43=301$.
		
	\end{enumerate}
	
	\begin{remark}
	For $i\neq 14,15,16,17,18$, the examples $X_i$ and $G_{X_i}$ are easily obtained from known examples of cubic hypersurfaces of dimensions $\leq 4$. For $i= 14,15,16,17,18$, we find  the cubic fivefolds $X_i$ and their symmetries $G_{X_i}$ in the process of our classification of $(5,3)$-groups in Section \ref{sec:5}. Smoothness of these cubics and $G_{X_i}\subseteq {\rm Aut}(X_i)$ can be verified by Mathematica \cite{Wo} and Magma \cite{BCP}.
	\end{remark}
	
	\subsection{Reduction to the examples}
	
	Based on partitionability and the differential method, we have the following
	
	\begin{theorem}\label{thm:autex}
		For $i\in\{1,2,\dots,11, 19,20\}$, $\Aut(X_i)$ is isomorphic to $G_{X_i}$. 
	\end{theorem}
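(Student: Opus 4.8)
The plan is to establish the reverse inclusion $\Aut(X_i)\subseteq G_{X_i}$ for each of the thirteen indices; the inclusion $G_{X_i}\subseteq\Aut(X_i)$ holds by construction, and $\Aut(X_i)=\pi(G_{F_i})$ by Lemma~\ref{lem:exactseq}. I would organize the proof by the shape of $F_i$. The main family ($i=2,3,4,5,6,8,9$) consists of polynomials of additive type $F_i=H+K$, after first eliminating the linear equations in the presentations of $X_4$ and $X_9$ to obtain genuine cubic forms on $\P^6$ (for instance, eliminating $x_5$ from $X_4$ turns $\sum_{j=1}^5 x_j^3$ into the Clebsch cubic surface equation $x_1^3+x_2^3+x_3^3+x_4^3-(x_1+x_2+x_3+x_4)^3$). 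For each such $F_i$ the argument has three steps: (i) verify that $G_{F_i}$ is block diagonal, checking unpartitionability ($S_1^H=\emptyset$, via Remark~\ref{rem:m345}) of the relevant block and then invoking Proposition~\ref{prop:unp+fem} when the other summand is a sum of cubes ($i=2,3,4,5,6$) or Proposition~\ref{prop:4+3} in the $(3,4)$-case $i=8$; (ii) check that the block with at least four variables, say $H$, has $H$-liftable automorphism group; and (iii) apply Proposition~\ref{thm:relation} to obtain $\Aut(X_i)\cong\Aut(X_H)\times G_K$. It then remains to identify the two factors: $\Aut(X_H)$ is the automorphism group of a smooth cubic surface (Fermat, Clebsch, or the chain surface $x_1^2x_2+x_2^2x_3+x_3^2x_4+x_4^3$) or of the Klein cubic threefold, all supplied by the known classifications of $(2,3)$- and $(3,3)$-groups, while $G_K$ is computed from Lemma~\ref{lem:3vari} for a ternary Hesse cubic and by a short direct computation for $x_5^3+x_6^3+x_7^3$ and for the binary cubic $x_6^3+x_7^3$. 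Matching $\Aut(X_H)\times G_K$ against the table then yields the claim for these $i$.

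The example $i=7$ is of $(1,3,3)$-type, $F_7=x_7^3+H(x_1,x_2,x_3)+K(x_4,x_5,x_6)$ with $H,K$ unpartitionable Hesse cubics, so I would apply Proposition~\ref{prop:1+3+3} directly: it forces every element of $G_{F_7}$ to be block diagonal up to interchanging the two ternary blocks, whence $G_{F_7}\cong(G_H\times G_K)\rtimes C_2$ and, modulo scalars, the tabulated group. For the Fermat fivefold $i=1$ I would argue with characteristic sets: a combination $\sum_j l_j\,\partial F_1/\partial x_j=3\sum_j l_j x_j^2$ has rank equal to the number of nonzero $l_j$, so $S_1^{F_1}$ is exactly the set of seven coordinate points $[e_1],\dots,[e_7]$ in $\P^6$. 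By Lemma~\ref{lem:inv} every $A\in G_{F_1}$ permutes this set, hence is a monomial matrix, and preservation of $\sum_j x_j^3$ forces its nonzero entries into $\mu_3$; thus $G_{F_1}=C_3^7\rtimes S_7$ and $\Aut(X_1)=C_3^6\rtimes S_7$.

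The examples $i=10,11$ are handled by peeling off the single cube $x_7^3$: here $F_i=Q(x_1,\dots,x_6)+x_7^3$ with $Q$ the chain (resp. cyclic Klein) cubic fourfold, Proposition~\ref{prop:unp+fem} again gives the block structure, and Proposition~\ref{thm:relation} reduces $\Aut(X_i)$ to $\Aut(X_Q)\times C_3$, with $\Aut(X_Q)$ read off from the fourfold analysis. Finally, the two genuinely unpartitionable fivefolds $i=19,20$, namely the chain $\sum_{j=1}^6 x_j^2x_{j+1}+x_7^3$ and the Klein fivefold $\sum_{j=1}^7 x_j^2x_{j+1}$, admit no useful partition and must be treated by hand: I would use the $G_{F_i}$-invariance of the flag of characteristic subspaces $V_r^{F_i}$ (Lemma~\ref{lem:inv}) to force every automorphism to be monomial, and then solving the resulting linear recurrence $2a_j+a_{j+1}=\mathrm{const}$ on the diagonal exponents yields the cyclic group $C_{64}$ for $i=19$ and the metacyclic group $C_{43}\rtimes C_7$ for $i=20$, the extra $C_7$ coming from the surviving cyclic coordinate shift.

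I expect two principal difficulties. First, step (ii) is automatic from Theorem~\ref{thm:F-liftable} only when $\gcd(3,k)=1$; in the peeled cases $i=10,11$ one has $k=6$ with $\gcd(3,6)=3$, so $H$-liftability must be verified separately: via Schur--Zassenhaus when $3\nmid|\Aut(X_Q)|$ (as for the $C_{32}$ occurring for $i=10$), and by explicitly exhibiting an $F$-lift when $3\mid|\Aut(X_Q)|$ (the delicate case, since cubic fourfolds need not be $F$-liftable in general). Second, the unpartitionable cases $i=19,20$ carry the only genuinely five-dimensional input, the absence of non-monomial elements in $G_{F_i}$; deducing this cleanly from the characteristic-set flag, rather than by brute-force case analysis on the matrix entries, is the technical heart of the argument.
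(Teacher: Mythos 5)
Most of your outline coincides with the paper's actual proof: for $i\in\{2,\dots,9\}$ the paper likewise uses Propositions \ref{prop:unp+fem}, \ref{prop:1+3+3}, \ref{prop:4+3} to force block-diagonal shape and then concludes as in Example \ref{ex:X2} via Proposition \ref{thm:relation}; $i=1$ is quoted as well known (your characteristic-set argument for monomiality is a fine substitute); and for $i\in\{19,20\}$ the paper, like you, shows every automorphism is monomial (it does this by the differential method of \cite{OY19} rather than via the flag of the $V_r^{F_i}$, but the content is the same). The one place where you genuinely diverge is $i=10,11$: the paper treats these two by the differential method together with $19,20$, whereas you peel off $x_7^3$ and try to run Proposition \ref{thm:relation} with $k=6$.

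For $i=11$ this step fails, and not merely for lack of verification: the hypothesis (1) of Proposition \ref{thm:relation} is \emph{false} for $H=F_9'$, the Klein cubic fourfold. Indeed $G_{F_9'}\cong C_{63}\rtimes C_6$, its Sylow $3$-subgroup is the nonabelian group $C_9\rtimes C_3$ of order $27$, and the scalar subgroup $\langle \xi_3 I_6\rangle$ is exactly the centre of that Sylow $3$-subgroup; since every subgroup of order $9$ of a group of order $27$ is normal and hence meets the centre, no subgroup of $G_{F_9'}$ of order $126$ can avoid the scalars, so $\Aut(X_{F_9'})\cong C_{21}\rtimes C_6$ admits no $F_9'$-lifting. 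The fix you propose for the case $3\mid|\Aut(X_Q)|$ --- ``explicitly exhibiting an $F$-lift'' --- therefore cannot be carried out, and the conclusion you would draw from Proposition \ref{thm:relation}, namely $\Aut(X_{11})\cong \Aut(X_{F_9'})\times C_3\cong (C_{21}\rtimes C_6)\times C_3$, is wrong: that group has elementary abelian Sylow $3$-subgroup $C_3^3$, while the correct answer $G_{X_{11}}\cong C_{63}\rtimes C_6$ contains elements of order $9$. The right way to finish this case from the block-diagonal shape is not Proposition \ref{thm:relation} but the direct observation that $G_{F_{11}}=G_{F_9'}\times\langle\xi_3\rangle$ and hence $\Aut(X_{11})=G_{F_{11}}/\langle\xi_3 I_7\rangle\cong G_{F_9'}=C_{63}\rtimes C_6$ (equivalently, prove monomiality directly as the paper does). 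Your treatment of $i=10$ via Schur--Zassenhaus is correct, since there $|\Aut(X_Q)|=32$ is prime to $3$.
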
 
	
	\begin{proof} 
	
	It is well-known that $\Aut(X_1)= G_{X_1}$.
		For $i\in\{2,\dots,9\}$, by Propositions \ref{prop:unp+fem}, \ref{prop:1+3+3}, \ref{prop:4+3}, we can control the shape of matrices $A$ satisfying $[A]\in {\rm Aut}(X_i)$, and then similar to Example \ref{ex:X2}, we conclude that ${\rm Aut}(X_i)$ is isomorphic to $G_{X_i}$. For $i\in\{10,11, 19, 20\}$, as in proof of \cite[Theorem 3.18]{OY19}, by the differential method (Theorem \ref{thm:dm}), we infer that ${\rm Aut}(X_i)$ are generated by diagonal matrices and permutations of coordinates, which implies that ${\rm Aut}(X_i)=G_{X_i}$.
	\end{proof}
	
	\begin{remark}
 The automorphism groups of most Klein hypersurfaces can be computed using a refinement of the differential method (\cite{GLMV23}). 
	\end{remark}
	
	The following theorem states that if a polynomial defining a cubic hypersurface has a certain type of partition, then the automorphism group of the hypersurface is determined by known results.
	
	\begin{theorem}\label{thm:5+2}
		Let $F=F(x_1,...,x_7)$ be a smooth cubic form. If $F$ has a  $(2,5)$-type  or $(3,4)$-type partition, there exists $i\in\{1,\dots,9\}$ such that $\Aut(X_F)$ is isomorphic to a subgroup of $G_{X_i}$.
	\end{theorem}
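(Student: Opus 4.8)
The plan is to bring $F$ into a completely decomposed normal form, apply the three block--structure results to pin down $G_F$, and then read off $\Aut(X_F)$ from Proposition~\ref{thm:relation} together with the known classifications of $(2,3)$- and $(3,3)$-groups. First I would apply Lemma~\ref{lem:2n3n} to the given partition. A $(2,5)$-type partition upgrades to a $(1,1,5)$-type one, so after a coordinate change $F=x_1^3+x_2^3+G(x_3,\dots,x_7)$; a $(3,4)$-type partition upgrades either to $(1,1,1,4)$, giving $F=x_1^3+x_2^3+x_3^3+G(x_4,\dots,x_7)$, or to $F=H_3(x_1,x_2,x_3)+G(x_4,\dots,x_7)$ with $H_3$ an unpartitionable ternary cubic. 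In each case I then replace the remaining piece $G$ by a maximal partition of it. Two elementary facts make this finite: every summand of a partition of a smooth form is again smooth (the cone of a direct sum is nonsingular off the origin exactly when each summand's cone is), and every smooth binary cubic is $(1,1)$-partitionable, so no unpartitionable summand has size $2$. Hence $F$ becomes, up to $\GL(7,\C)$, a sum of unpartitionable smooth cubics whose sizes lie in $\{1,3,4,5\}$ and sum to $7$; running the bookkeeping on the two branches of Lemma~\ref{lem:2n3n} shows the resulting maximal partition type is exactly one of $(1^7)$, $(3,1^4)$, $(3,3,1)$, $(4,1^3)$, $(4,3)$, $(5,1,1)$.

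Next I would make $G_F$ block diagonal type by type. When a single unpartitionable summand $P$ of size $5$, $4$, or $3$ is present and the rest is a sum of cubes --- the types $(5,1,1)$, $(4,1^3)$, $(3,1^4)$ --- I write $F=P+(\text{sum of cubes})$; since $S_1^P=\emptyset$ by Remark~\ref{rem:m345}, Proposition~\ref{prop:unp+fem} forces every matrix in $G_F$ to be block diagonal. For type $(4,3)$ both summands are unpartitionable, so block diagonality is Proposition~\ref{prop:4+3}, and for type $(3,3,1)$ Proposition~\ref{prop:1+3+3} gives block diagonality up to a possible interchange of the two ternary blocks. Once $G_F$ is block diagonal I apply Proposition~\ref{thm:relation}, choosing for its distinguished factor $H$ a summand with $4$ or $5$ variables: for such $H$ the number of variables is coprime to $3$, so $\Aut(X_H)$ is $H$-liftable by Theorem~\ref{thm:F-liftable}, and the proposition yields $\Aut(X_F)\cong\Aut(X_H)\times G_K$ exactly as in Example~\ref{ex:X2}. (In type $(3,1^4)$ the size-$4$ summand is the Fermat surface $x_4^3+\dots+x_7^3$ itself.) The Fermat type $(1^7)$ gives $\Aut(X_F)=G_{X_1}$ outright, while the swap type $(3,3,1)$ is read directly off Proposition~\ref{prop:1+3+3} as an extension of a product of two ternary groups by a subgroup of $C_2$.

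It then remains to name the factors and verify the embeddings. The ternary factors are governed by Lemma~\ref{lem:3vari}: an unpartitionable $H_3$ has $G_{H_3}\cong C_3^2\rtimes S_3$ or $(C_3^2\rtimes C_3)\rtimes C_4$. The sum-of-cubes blocks contribute the standard groups $C_3^3\rtimes S_4$ (four cubes), $C_3^3\rtimes S_3$ (three cubes), and $C_3^2\rtimes C_2\cong C_3\times S_3$ (two cubes). The size-$4$ and size-$5$ unpartitionable factors contribute $\Aut(X_{H_4})$ and $\Aut(X_{G_5})$, which range over the automorphism groups of unpartitionable smooth cubic surfaces and threefolds. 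Feeding in the classifications of $(2,3)$- and $(3,3)$-groups --- whose maximal unpartitionable members are realized precisely by $X_3,X_4$ (giving $C_8$ and $S_5$) and $X_5,X_6$ (giving $C_{16}$ and $\PSL(2,11)$) --- matches the types to the examples: $(1^7)\mapsto G_{X_1}$; $(3,1^4)\mapsto G_{X_2}$; $(4,1^3)\mapsto G_{X_3}$ or $G_{X_4}$; $(5,1,1)\mapsto G_{X_5}$ or $G_{X_6}$; $(3,3,1)\mapsto G_{X_7}$; and $(4,3)\mapsto G_{X_8}$ or $G_{X_9}$.

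I expect the genuine difficulty to be this final matching rather than the structure theory, which is already packaged in Propositions~\ref{prop:unp+fem}, \ref{prop:1+3+3}, \ref{prop:4+3}. Concretely, one must know that the automorphism group of every unpartitionable smooth cubic surface embeds into $C_8$ or $S_5$, and that of every unpartitionable smooth cubic threefold embeds into $C_{16}$ or $\PSL(2,11)$; these are the two nontrivial inputs from the $(2,3)$- and $(3,3)$-classifications, and they are exactly what force the product groups of the second step to land inside $G_{X_1},\dots,G_{X_9}$. A secondary point is the swap type $(3,3,1)$, where $\Aut(X_F)$ need not be a direct product: here I would check that the interchange $C_2$ occurs only when the two ternary cubics are projectively equivalent, so that the largest group arising is $((C_3^2\rtimes C_3)\rtimes C_4)^2\rtimes C_2=G_{X_7}$ and every other case is a subgroup of it.
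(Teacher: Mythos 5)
Your proposal is correct and follows essentially the same route as the paper's (sketched) proof: reduce to the same six maximal partition types, use Propositions \ref{prop:unp+fem}, \ref{prop:1+3+3}, \ref{prop:4+3} to force block-diagonal form, and then combine Theorem \ref{thm:F-liftable} and Proposition \ref{thm:relation} with the known classifications of $(2,3)$- and $(3,3)$-groups, exactly as in Example \ref{ex:X2}. The only caveat is that your final type-by-type matching is more specific than necessary (e.g.\ the claim that every unpartitionable cubic threefold's automorphism group embeds into $C_{16}$ or ${\rm PSL}(2,11)$ is not needed): it suffices, as the paper does, to embed $\Aut(X_H)$ into one of the maximal $(2,3)$- or $(3,3)$-groups and note that each resulting product is a subgroup of some $G_{X_i}$ with $i\le 9$.
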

	
	\begin{proof}
	
	We sketch the proof since it is similar to Example \ref{ex:X2}. By assumption, $F$ has a maximal $(a_1,...,a_t)$-type partition, where $a_1\le a_2\le...\le a_t$, and we may assume that  $(a_1,...,a_t)$ is one of the following types: $(1,1,5)$, $(3,4)$, $(1,1,1,4)$, $(1,3,3)$, $(1,1,1,1,3)$, $(1,1,1,1,1,1,1)$. If $(a_1,...,a_t)=(1,1,5)$, then we may assume $F=H(x_1,...,x_5)+x_6^3+x_7^3$. By Theorem \ref{thm:F-liftable} and Propositions \ref{thm:relation}, \ref{prop:unp+fem}, we have $${\rm Aut}(X_F)\cong {\rm Aut}(X_H)\times G_{x_6^3+x_7^3}\cong {\rm Aut}(X_H)\times (C_3^2\rtimes C_2).$$ Then by \cite[Theorem 1.1]{WY20} and Theorem \ref{thm:autex}, we infer that ${\rm Aut}(X_F)$ is isomorphic to a subgroup of $G_{X_i}$ for some $i\in\{1,2,...,6\}$. The remaining cases of $(a_1,...,a_t)$ are similar.
	\end{proof}
		Theorem \ref{thm:5+2} has the following direct consequence, which will be frequently used in our classification of $(5,3)$-groups.
	\begin{corollary}\label{cor:partitionbyA}
		Let $F=F(x_1,...,x_7)$ be a smooth cubic form. Suppose that there exists $A\in G_F$ such that $A$ is similar to $\Diag(\xi_3,\xi_3,1,1,1,1,1)$ $($resp. $\Diag(\xi_3,\xi_3,\xi_3,1,1,1,1)$$)$. Then $F$ has a $(2,5)$-type $($resp. $(3,4)$-type$)$ partition. In particular, $\Aut(X_F)$ is isomorphic to a subgroup of $G_{X_i}$ for some  $i\in\{1,\dots,9\}$.
	\end{corollary}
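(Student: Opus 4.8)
The plan is to turn the eigenvalue hypothesis on $A$ into a genuine diagonal symmetry of a linear change of coordinates applied to $F$, and then read off the block decomposition directly from the surviving monomials. I will treat the $(2,5)$-case in detail; the $(3,4)$-case is identical.

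I would start from $P^{-1} A P = D$, where $D := \Diag(\xi_3, \xi_3, 1, 1, 1, 1, 1)$ and $P \in \GL(7,\C)$. The one bookkeeping point is the identity $G_{P(F)} = P^{-1} G_F P$, which follows from the composition rule $(AB)(F) = B(A(F))$ recorded in (2.1): unwinding it shows that $B \in G_{P(F)}$ exactly when $P B P^{-1} \in G_F$. Applying this with $B = D$ and using $P D P^{-1} = A \in G_F$ gives $D \in G_{P(F)}$; equivalently, writing $G := P(F)$, we have $G(\xi_3 x_1, \xi_3 x_2, x_3, \dots, x_7) = G$. Since exhibiting a block decomposition of $P(F)$ is, by Definition \ref{def:parti}, exactly exhibiting a partition of $F$ (the transforming matrix is $P$), it suffices to work with $G$.

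Next I would expand $G = \sum_\alpha c_\alpha x^\alpha$ into degree-$3$ monomials $x^\alpha = x_1^{\alpha_1} \cdots x_7^{\alpha_7}$. The symmetry $D$ scales $x^\alpha$ by $\xi_3^{\alpha_1 + \alpha_2}$, so $D$-invariance forces $\alpha_1 + \alpha_2 \equiv 0 \pmod 3$ for every monomial with $c_\alpha \neq 0$. Because $0 \le \alpha_1 + \alpha_2 \le 3$, the exponent sum is either $0$ (so the monomial lies in $\C[x_3, \dots, x_7]$) or $3$ (so the monomial lies in $\C[x_1, x_2]$); hence $G = H(x_1, x_2) + K(x_3, \dots, x_7)$, which is precisely a $(2,5)$-type partition of $F$. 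The case $A \sim \Diag(\xi_3, \xi_3, \xi_3, 1, 1, 1, 1)$ runs word for word with the exponent sum $\alpha_1 + \alpha_2 + \alpha_3 \in \{0,3\}$, yielding $G = H(x_1, x_2, x_3) + K(x_4, \dots, x_7)$, a $(3,4)$-type partition.

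Finally, the ``in particular'' clause is immediate from Theorem \ref{thm:5+2}, which asserts exactly that a smooth cubic form in seven variables carrying a $(2,5)$- or $(3,4)$-type partition has $\Aut(X_F)$ isomorphic to a subgroup of some $G_{X_i}$ with $i \in \{1, \dots, 9\}$. I do not anticipate any genuine obstacle here: the only step demanding a little care is the conjugation identity $G_{P(F)} = P^{-1} G_F P$, and once $D$ itself fixes $G$ the monomial count is completely transparent. The real content of the corollary is simply the observation that a diagonalizable order-$3$ symmetry with eigenvalue pattern $(\xi_3,\xi_3,1,\dots)$ (resp. $(\xi_3,\xi_3,\xi_3,1,\dots)$) splits the cubic into two blocks.
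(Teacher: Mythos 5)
Your proof is correct and is exactly the routine argument the paper has in mind (the paper states this corollary as a direct consequence of Theorem \ref{thm:5+2} without writing out the conjugation-and-monomial-weight computation). The conjugation identity $G_{P(F)}=P^{-1}G_FP$ and the observation that $\alpha_1+\alpha_2\in\{0,3\}$ for every surviving monomial are precisely the intended steps, so there is nothing to add.
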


	\section{Automorphism groups of cubic fivefolds}\label{sec:5}
	In this section, we classify all possible subgroups of the automorphism groups of smooth cubic fivefolds (Theorem \ref{thm:Main}). Roughly speaking, we proceed the classification in the following order: abelian subgroups, Sylow subgroups, non-abelian solvable subgroups, non-solvable subgroups. We use Strategy \ref{strategy} to rule out relevant non-abelian groups and it relies on partitionability via abelian subgroups (Theorem \ref{thm:control}) and the notion of special almost $(5,3)$-representations (see Definitions \ref{def:(n,d)-rep}, \ref{def:special}).
	
	Our main theorem is the following:
	\begin{theorem}\label{thm:Main}
		Let $G$ be a finite group. Then the following two conditions are equivalent:
		
		\begin{enumerate}
  \item [(i)] $G$ is isomorphic to a subgroup of one of the $20$ groups $G_{X_i}$ $(i=1,...,20)$ ; and
  \item [(ii)] $G$ acts on a smooth cubic fivefold faithfully.
\end{enumerate}
	\end{theorem}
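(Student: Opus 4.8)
The plan is to prove the two implications separately, with essentially all of the difficulty concentrated in $(ii)\Rightarrow(i)$. The implication $(i)\Rightarrow(ii)$ is immediate from the examples: each $G_{X_i}$ is by construction a subgroup of $\Aut(X_i)$ for the smooth cubic fivefold $X_i$ exhibited in Subsection \ref{mainex} (and for $i\in\{1,\dots,11,19,20\}$ one even has $\Aut(X_i)=G_{X_i}$ by Theorem \ref{thm:autex}). Hence any $G$ embedding into some $G_{X_i}$ acts faithfully on $X_i$.

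For $(ii)\Rightarrow(i)$, suppose $G$ acts faithfully on a smooth cubic fivefold $X_F$ with $F=F(x_1,\dots,x_7)$. By Lemma \ref{lem:exactseq} I may lift $G$ into $G_F\subset\GL(7,\C)$ and argue with matrices fixing $F$. The first dichotomy I would exploit is partitionability. If $F$ admits a $(2,5)$- or $(3,4)$-type partition, then Theorem \ref{thm:5+2} shows directly that $\Aut(X_F)$, and therefore $G$, embeds into one of $G_{X_1},\dots,G_{X_9}$, so the statement holds. The essential case is thus when $F$ has no such partition. By Corollary \ref{cor:partitionbyA} this is equivalent to the condition that $G_F$ contains no element conjugate to $\Diag(\xi_3,\xi_3,1,1,1,1,1)$ or $\Diag(\xi_3,\xi_3,\xi_3,1,1,1,1)$, a strong constraint on the admissible eigenvalue patterns of elements of $G$.

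In this core case I would first cut the problem down to a finite search and then stratify it. Proposition \ref{prop:boundorder} bounds $|\Aut(X_F)|\le 90720$, so $|G|$ is bounded, and smoothness confines the primes dividing $|G|$ to a short list. I would then classify candidate groups in order of increasing complexity, mirroring abelian $\to$ Sylow $\to$ solvable $\to$ non-solvable. Concretely: first classify the abelian $(5,3)$-groups through their diagonal $(5,3)$-representations constrained by smoothness (Theorem \ref{thm:abelian}); use these to pin down the possible Sylow $2$- and $3$-subgroups (Theorem \ref{thm:sylow23}); combine the absence of the forbidden $\xi_3$-elements with Theorem \ref{thm:control}, which forces a partition whenever $G_F$ contains suitable abelian subgroups, to reduce to a finite explicit list of abstract groups; and finally, for each candidate, run Strategy \ref{strategy}. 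Its sub-test either produces an abelian subgroup forcing a $(2,5)$- or $(3,4)$-partition, returning us to the partitioned case, or else reduces the question to enumerating the \emph{special} almost $(5,3)$-representations of $G$ and, for each, computing the finite-dimensional linear system of invariant cubics and testing it for a smooth member. Treating the solvable (Theorem \ref{thm:solvable}) and non-solvable candidates in turn yields precisely the groups in the table, and a final subgroup check (Remark \ref{rem:Aut=G}, Lemma \ref{lem:ab53}) confirms that each lies in one of the $20$ maximal $G_{X_i}$.

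The hard part will be the last step in the non-solvable and large solvable cases: once the abstract group is fixed, I must enumerate all of its faithful $7$-dimensional representations avoiding the forbidden $\xi_3$-eigenvalue patterns, and for each decide whether the space of invariant cubic forms contains a smooth one. This is where the genuine computation lives. It is exactly here that partitionability and characteristic sets earn their keep, by keeping both the list of candidate abstract groups and the representation-theoretic search finite and tractable, and by letting Propositions \ref{prop:unp+fem}, \ref{prop:1+3+3}, \ref{prop:4+3} and \ref{thm:relation} resolve the partitioned cases cleanly rather than by brute force.
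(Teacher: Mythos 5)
Your proposal is correct and follows essentially the same route as the paper: the reduction via partitionability (Theorem \ref{thm:5+2}, Corollary \ref{cor:partitionbyA}), the order bound of Proposition \ref{prop:boundorder}, and the staged classification abelian $\to$ Sylow $\to$ solvable $\to$ non-solvable carried out with Strategy \ref{strategy} and special almost $(5,3)$-representations. The only detail you leave implicit is that for several large non-solvable orders the paper supplements Strategy \ref{strategy} with a short-exact-sequence argument on maximal normal subgroups (as in the case $m=2016$), but this is an implementation point within the framework you describe.
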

	
	\begin{remark}\label{rem:Aut=G}
		For $i\in\{1,2,\dots,11, 19,20\}$, in Theorem \ref{thm:autex}, we have  seen that $\Aut(X_i)=G_{X_i}$. Note that $G_{X_i}$ is not isomorphic to a subgroup of $G_{X_j}$ if $1\le i, j\le 20$ and $i\neq j$. Thus, Theorem \ref{thm:Main} implies that $\Aut(X_i)=G_{X_i}$ for $i\in\{12,13,...,18\}$.  The Fermat cubic fivefold has the largest possible order (3674160) for the automorphism group among all smooth cubic fivefolds. A list of all subgroups of the $20$ groups $\Aut(X_i)$ can be found in the ancillary file {\ttfamily 53-groups.txt} to \cite{YYZ23}.

	\end{remark}
	
	\subsection{$(n,d)$-representations and smoothness}\label{ss:smli}

	By Theorem \ref{thm:F-liftable}, the automorphism group of every smooth cubic fivefold $X_F$ admits an $F$-lifting. Thus, a finite group $G$ is a $(5,3)$-group if and only if $G$ admits an injective group homomorphism $\rho$ to $\GL(7,\C)$ such that $\rho(G)$ preserves a smooth cubic form and $\pi \circ \rho$ is injective. Motivated by this, we introduce the following definitions (we identify $\GL(\C^{n+2})$ with ${\rm GL}(n+2,\C)$ by choosing a basis of $\C^{n+2}$).

\begin{definition}\label{def:(n,d)-rep}	
	 Let $A, B\in {\rm GL}(n+2,\C)$.  Let $\rho: G\rightarrow {\rm GL}(n+2,\C)$ be a faithful linear representation of a finite group $G$. Let $\chi$ be the character of $\rho$. 
	 \begin{enumerate}
  \item [(1)] We say $\rho$ is an {\it $(n,d)$-representation} of $G$ if there exists a smooth form $F=F(x_1,...,x_{n+2})$ of degree $d$ such that $\rho(G)$ is an $F$-lifting of a subgroup of ${\rm Aut}(X_F)$. 
  \item  [(2)] We say {\it $\rho$ is an almost $(n,d)$-representation} of $G$ if for every proper abelian subgroup $H<G$, the restriction $\rho| H$ is an $(n,d)$-representation of $H$. 
  \item [(3)] We say $\chi $ is an {\it $(n,d)$-character} (resp. {\it almost $(n,d)$-character}) of $G$ if $\rho$ is an $(n,d)$-representation (resp. almost $(n,d)$-representation).
\end{enumerate}
\end{definition}	
	
	Let $\rho_i: G\rightarrow {\rm GL}(n+2,\C)$ be two faithful linear representations of a finite group $G$ with characters $\chi_i$ $(i=1,2)$. Recall that $\rho_1$ and $\rho_2$ are isomorphic (i.e., there exists $B\in {\rm GL}(n+2,\C)$ such that $B\rho_1(g)B^{-1}=\rho_2(g)$ for all $g\in G$) if and only if $\chi_1=\chi_2$. From this, we have that $\rho_1(G)$ and $\rho_2(G)$ are conjugate in ${\rm GL}(n+2,\C)$ if and only if there exists $f\in {\rm Aut}(G)$ such that $\chi_1=f^*(\chi_2)$. Here $f^*(\chi_2)(g):={\rm Tr}(\rho_2(f(g)))$ for all $g\in G$.	 

		We introduce the following equivalence relation between linear representations of finite groups.
		
\begin{definition}
			Let $\rho_i: G\rightarrow {\rm GL}(n+2,\C)$ be two faithful linear representations of a finite group $G$ with characters $\chi_i$ $(i=1,2)$. Let $d$ be a positive integer.  We say $\rho_1$ and $\rho_2$ are {\it $d$-equivalent} if the groups $\langle \rho_1(G),  \xi_d I_{n+2}\rangle$ and $\langle \rho_2(G),  \xi_d I_{n+2}\rangle$ are conjugate in ${\rm GL}(n+2,\C)$. We say $\chi_1$ and $\chi_2$ are $d$-equivalent if $\rho_1$ and $\rho_2$ are $d$-equivalent.
\end{definition}
		The motivation of the definition of $d$-equivalence is the following 
		
\begin{lemma}\label{lem:d-equiv}
			Let $\rho_i: G\rightarrow {\rm GL}(n+2,\C)$ be $d$-equivalent faithful linear representations of a finite group $G$ $(i=1,2)$. If $\rho_1$ is an $(n,d)$-representation, so is $\rho_2$. 
\end{lemma}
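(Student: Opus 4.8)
The plan is to unwind the definition of an $(n,d)$-representation into two concrete conditions and then transport them along the conjugation furnished by $d$-equivalence. First I would use the hypothesis that $\rho_1$ is an $(n,d)$-representation to produce, via Definition \ref{def:(n,d)-rep}, a smooth form $F$ of degree $d$ with $\rho_1(G)\subseteq G_F$ and with $\pi|_{\rho_1(G)}$ injective; these are exactly the two conditions packaged in the statement that $\rho_1(G)$ is an $F$-lifting of a subgroup of ${\rm Aut}(X_F)$. Since $(\xi_d I_{n+2})(F)=\xi_d^{\,d}F=F$, the scalar $\xi_d I_{n+2}$ also lies in $G_F$, so the whole group $\langle \rho_1(G),\xi_d I_{n+2}\rangle$ is contained in $G_F$.

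Next I would invoke $d$-equivalence to choose $B\in\GL(n+2,\C)$ with $B\langle\rho_1(G),\xi_d I_{n+2}\rangle B^{-1}=\langle\rho_2(G),\xi_d I_{n+2}\rangle$, and set $F':=B^{-1}(F)$. The key algebraic point is the transformation rule $G_{C(F)}=C^{-1}G_F C$, which I would obtain by rewriting the invariance condition $A(C(F))=C(F)$ using the action convention $A(F)(x)=F(xA^{T})$ and the substitution $y=xC^{T}$, so that it becomes $(CAC^{-1})(F)=F$; taking $C=B^{-1}$ yields $G_{F'}=B G_F B^{-1}$. Combining the displayed inclusions gives $\rho_2(G)\subseteq\langle\rho_2(G),\xi_d I_{n+2}\rangle=B\langle\rho_1(G),\xi_d I_{n+2}\rangle B^{-1}\subseteq B G_F B^{-1}=G_{F'}$. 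Moreover $F'$ is again a smooth form of degree $d$, since $X_{F'}$ is the image of the smooth hypersurface $X_F$ under the projective isomorphism induced by $B^{-1}$.

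The step I expect to be the main obstacle is verifying that $\pi|_{\rho_2(G)}$ is injective, i.e. that $\rho_2(G)$ contains no nontrivial scalar; faithfulness of $\rho_2$ alone does not rule this out, and it is precisely here that $d$-equivalence (rather than mere conjugacy of $\rho_1(G)$ and $\rho_2(G)$) is needed. I would argue by orders. Because $\xi_d I_{n+2}\in\ker\pi$, we have $\pi(\langle\rho_2(G),\xi_d I_{n+2}\rangle)=\pi(\rho_2(G))$, and since conjugation descends to $\PGL(n+2,\C)$ this image is conjugate, hence isomorphic, to $\pi(\langle\rho_1(G),\xi_d I_{n+2}\rangle)=\pi(\rho_1(G))$. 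As $\pi|_{\rho_1(G)}$ is injective and $\rho_1$ is faithful, the latter is isomorphic to $G$, whence $\pi(\rho_2(G))\cong G$. On the other hand $\pi(\rho_2(G))$ is a quotient of $\rho_2(G)\cong G$, and a surjection between finite groups of equal order is an isomorphism; therefore $\rho_2(G)\cap\{\lambda I_{n+2}\mid\lambda\in\C\}=\{I_{n+2}\}$ and $\pi|_{\rho_2(G)}$ is injective.

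Finally I would assemble the pieces: from $\rho_2(G)\subseteq G_{F'}$ and Lemma \ref{lem:exactseq} we get $\pi(\rho_2(G))\subseteq\pi(G_{F'})={\rm Aut}(X_{F'})$, while the injectivity of $\pi|_{\rho_2(G)}$ just established shows $\rho_2(G)$ is a lifting of $\pi(\rho_2(G))$; since every element of $\rho_2(G)$ fixes $F'$, it is in fact an $F'$-lifting. Thus $\rho_2(G)$ is an $F'$-lifting of a subgroup of ${\rm Aut}(X_{F'})$, so $\rho_2$ is an $(n,d)$-representation, as claimed.
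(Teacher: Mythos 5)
Your proposal is correct and follows essentially the same route as the paper: transport the invariant smooth form along the conjugating matrix $B$ to get $F'=B^{-1}(F)$, and then use the fact that $\langle\rho_2(G),\xi_d I_{n+2}\rangle$ has order $d|G|$ (equivalently, your comparison of $\pi(\rho_2(G))$ with $\pi(\rho_1(G))\cong G$) to rule out nontrivial scalars in $\rho_2(G)$ and conclude that $\rho_2(G)$ is an $F'$-lifting. The only differences are cosmetic — you make the transformation rule $G_{B^{-1}(F)}=BG_FB^{-1}$ and the smoothness of $F'$ explicit, and you phrase the injectivity of $\pi|_{\rho_2(G)}$ via images in $\PGL(n+2,\C)$ rather than the order count $|\langle\rho_2(G),\xi_dI_{n+2}\rangle|=d|G|$ used in the paper.
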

		
\begin{proof}
			By assumption, there exists a smooth form $F$ of degree $d$ such that $A_1(F)=F$ for all $A_1\in \langle \rho_1(G),  \xi_d I_{n+2}\rangle$. Since $B\langle \rho_1(G),  \xi_d I_{n+2}\rangle B^{-1}=\langle \rho_2(G),  \xi_d I_{n+2}\rangle$ for some $B\in {\rm GL}(n+2,\C)$, we have $A_2(H)=H$ for all $A_2\in \langle \rho_2(G),  \xi_d I_{n+2}\rangle$, where $H:=B^{-1}(F)$. Because
				$|\langle \rho_2(G), \xi_d I_{n+2}\rangle| = |\langle \rho_1(G), \xi_d I_{n+2}\rangle| = d|G|$,
				one has $\rho_2(G) \cap\langle \xi_d I_{n+2}\rangle= \{I_{n+2}\}$ and so $\pi \rho_2$ is injective. Thus, $\rho_2(G)$ is an $H$-lifting of $\pi\rho_2(G)\subset {\rm Aut}(X_{H})$, which implies  $\rho_2$ is an $(n,d)$-representation.
\end{proof}
	
	Using smoothness in an effective way is important for classification of automorphism groups of smooth hypersurfaces. We mainly use two ``combinatorial" non-smoothness tests Lemmas \ref{lem:nsmcubic} and \ref{lem:cri} in our classification of $(5,3)$-groups. 
		
	\begin{lemma}[{\cite[Lemma 3.2 and Proposition 3.3]{OY19}}]\label{lem:nsmcubic}
		Let $F=F(x_1,\dots,x_7)$ be a homogeneous polynomial of degree 3. Then $F$ is not smooth if one of the following four conditions is true:
		
		\begin{enumerate}
  \item [(i)] There exists $ 1\leq i\leq 7$, such that for all $ 1\leq j\leq 7,  x_i^2x_j\notin F$;
  \item [(ii)] There exist three distinct variables $x_p,x_q,x_r$, such that  $F\in ( x_p,x_q,x_r) $;
  \item [(iii)] There exist four distinct variables $x_p,x_q,x_r,x_s$, such that $F\in ( x_p,x_q) +( x_r,x_s) ^2$; 
  \item [(iv)] There exist five distinct variables $x_p,x_q,x_r,x_s,x_t$, such that $F\in ( x_p) +(x_q, x_r,x_s,x_t) ^2$.
\end{enumerate}
		\end{lemma}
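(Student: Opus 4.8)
The plan is to use the Jacobian criterion. Since $\deg F = 3$, Euler's identity $3F = \sum_{i=1}^{7} x_i \frac{\partial F}{\partial x_i}$ shows that $F$ fails to be smooth precisely when the seven partial derivatives $\frac{\partial F}{\partial x_1},\dots,\frac{\partial F}{\partial x_7}$ have a common zero in $\P^6$ (such a point then automatically lies on $X_F$). Thus for each of the four conditions I would exhibit such a common zero.

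Condition (i) is immediate. If $x_i^2 x_j \notin F$ for every $j$, I claim the coordinate point $e_i = [0:\cdots:1:\cdots:0]$ (with $1$ in the $i$-th slot) is singular. Indeed, for a degree-$3$ monomial $\mathfrak{m}$, the value $\frac{\partial \mathfrak{m}}{\partial x_j}(e_i)$ is nonzero only when $\frac{\partial \mathfrak{m}}{\partial x_j}$ is a nonzero multiple of $x_i^2$, which forces $\mathfrak{m} = x_i^2 x_j$. Since no such monomial occurs in $F$, every $\frac{\partial F}{\partial x_j}$ vanishes at $e_i$, so $e_i$ is a common zero of the partials.

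For conditions (ii)--(iv) I would use a uniform strategy: pass to the coordinate subspace $L \subset \P^6$ cut out by setting the distinguished variables to zero, check that all but a few of the partials vanish identically on $L$ because of the ideal membership of $F$, and then locate a common zero of the residual quadrics on $L$ via the projective dimension theorem (any $k$ hypersurfaces in $\P^n$ meet whenever $k \le n$). Concretely, in case (ii) take $L = \{x_p = x_q = x_r = 0\} \cong \P^3$; membership $F \in (x_p,x_q,x_r)$ gives $F|_L = 0$ and keeps each monomial in the ideal after differentiating by $x_k$ with $k \notin \{p,q,r\}$, so the only partials not forced to vanish on $L$ are the three quadrics $\frac{\partial F}{\partial x_p}, \frac{\partial F}{\partial x_q}, \frac{\partial F}{\partial x_r}$, and $3 \le \dim L = 3$. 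In case (iii) take $L = \{x_p = x_q = x_r = x_s = 0\} \cong \P^2$; the key point is that differentiating a monomial of $(x_r,x_s)^2$ by $x_r$ or $x_s$ only lowers its $(x_r,x_s)$-degree to $\ge 1$, so $\frac{\partial F}{\partial x_r}$ and $\frac{\partial F}{\partial x_s}$ still lie in $(x_p,x_q)+(x_r,x_s)$ and vanish on $L$, leaving the two residual quadrics $\frac{\partial F}{\partial x_p}, \frac{\partial F}{\partial x_q}$ with $2 \le \dim L = 2$. In case (iv) take $L = \{x_p = x_q = x_r = x_s = x_t = 0\} \cong \P^1$; the same differentiation bookkeeping leaves only the single quadric $\frac{\partial F}{\partial x_p}$, with $1 \le \dim L = 1$. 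In each case the residual quadrics have a common zero on $L$, which is then a singular point of $X_F$.

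The only place demanding care, and the main potential obstacle, is the bookkeeping in the second step: verifying exactly which partial derivatives are forced to vanish on $L$, so that the number of residual quadrics does not exceed $\dim L$. The delicate observations are that differentiating with respect to a ``squared'' variable (the $x_r, x_s$ in (iii), or any of $x_q, x_r, x_s, x_t$ in (iv)) still leaves the result in the relevant monomial ideal, and that the count of residual quadrics exactly equals $\dim L$ in each case, so the dimension theorem applies with no room to spare. Once this is pinned down, the conclusion in each case follows directly from the projective dimension theorem.
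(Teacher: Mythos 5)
Your proof is correct, and the bookkeeping in cases (ii)--(iv) (in particular that differentiating with respect to a ``squared'' variable only drops the $(x_r,x_s)$- or $(x_q,\dots,x_t)$-degree from $2$ to $1$, so those partials still vanish on $L$) checks out, with the count of residual quadrics exactly matching $\dim L$ each time. The paper gives no proof of its own and simply cites \cite[Lemma 3.2 and Proposition 3.3]{OY19}; your argument via the Jacobian criterion, Euler's identity, and the projective dimension theorem on a coordinate subspace is precisely the argument used there.
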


	It turns out that this test is often convenient and sufficient to rule out relevant candidates of cubics and groups for our purposes.
	
	\begin{example}\label{ex:C2}
	Let $\rho: C_2\rightarrow\GL(7,\C)$ be a faithful linear representation of $C_2$. Then we may assume $\rho(C_2)=\langle A_{a,b}\rangle$, where $A_{a,b}:=\begin{pmatrix}-I_a&0\\0&I_b\end{pmatrix}$, $(a,b)\in \{(1,6)$, $(2,5)$, $(3,4)$, $(4,3)$, $(5,2)$, $(6,1)\}$. Suppose $F$ is a smooth cubic form satisfying $A(F)=F$. If $a\ge 4$, then $F\in ( x_5,x_6,x_7)$, which contradicts Lemma \ref{lem:nsmcubic}. On the other hand, $A_{1,6}$, $A_{2,5}$, $A_{3,4}$ preserve the smooth cubic form $x_1^2x_4+x_2^2x_5+x_3^2x_6+x_4^3+x_5^3+x_6^3+x_7^3$. Thus, up to isomorphisms, $C_2$ has exactly three $(5,3)$-representations corresponding to $A_{1,6}, A_{2,5}, A_{3,4}$ respectively.
		\end{example}

	Lemma \ref{lem:nsmcubic} fails in some cases and we use the following stronger test in our computer program.
	\begin{lemma}[{\cite[Lemma 1.6]{GLM23}}]\label{lem:cri}
		Let $F=F(x_1,\dots,x_7)$ be a homogeneous polynomial of degree 3. If there exist three mutually disjoint collections of variables $V_1$, $V_2$, $V_3$ such that $\bigcup_{i} V_i=\{x_1,\dots,x_7\}$, $|V_1|>|V_2|$ and for every monomial $\mathfrak{m}\in F$, $\mathfrak{m}$ can be expressed in one of the following forms:
		
		\begin{enumerate}
  \item [(i)] $\mathfrak{m}=x_px_qx_r$, $x_p,x_q\in V_1$ and $x_r\in V_2$;
  \item [(ii)] $\mathfrak{m}=x_px_qx_r$, $x_p\in V_1$ and $x_q,x_r\in V_2\cup V_3$; or
  \item [(iii)] $\mathfrak{m}=x_px_qx_r$, $x_p,x_q,x_r\in V_2\cup V_3$;
\end{enumerate}
		then $F$ is not smooth.
	\end{lemma}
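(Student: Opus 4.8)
The plan is to exhibit an explicit singular point of $X_F$ supported on the variables in $V_1$. Since $F$ is homogeneous of degree $3$, Euler's identity $\sum_i x_i\frac{\partial F}{\partial x_i}=3F$ shows that $X_F\subset\P^6$ is singular as soon as there is a nonzero point at which all seven partials $\frac{\partial F}{\partial x_i}$ vanish simultaneously (such a point automatically lies on $X_F$). So it suffices to produce one such point.

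First I would restrict attention to the coordinate subspace $L\cong\C^{|V_1|}$ cut out by setting every variable in $V_2\cup V_3$ equal to zero, and examine the partials at a point $p\in L$. Writing the three admissible monomial shapes as $V_1V_1V_2$, $V_1(V_2\cup V_3)^2$, and $(V_2\cup V_3)^3$ (types (i), (ii), (iii)), a short bookkeeping of $\frac{\partial}{\partial x_j}\mathfrak{m}$ gives the following. If $x_j\in V_1$, then every surviving monomial of $\frac{\partial F}{\partial x_j}$ still carries a factor from $V_2\cup V_3$ (type (i) retains its $V_2$-factor, type (ii) retains a quadratic $V_2\cup V_3$-factor, type (iii) contributes nothing), so $\frac{\partial F}{\partial x_j}$ vanishes on $L$. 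If $x_j\in V_3$, then type (i) never contributes, as its monomials avoid $V_3$, while types (ii) and (iii) again leave a factor in $V_2\cup V_3$; hence $\frac{\partial F}{\partial x_j}$ vanishes on $L$ as well. Thus the only derivatives that need not vanish on $L$ are those with $x_j\in V_2$.

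The key point is that for $x_j\in V_2$ the restriction $\left.\frac{\partial F}{\partial x_j}\right|_L$ comes solely from the type (i) monomials $x_px_qx_j$ with $x_p,x_q\in V_1$: differentiating strips off the linear $V_2$-variable $x_j$ and leaves a quadratic form $Q_j$ in the $|V_1|$ coordinates of $L$, the type (ii) and (iii) contributions again retaining a vanishing $V_2\cup V_3$-factor. Finding a singular point on $L$ therefore amounts to finding a common nonzero zero of the $|V_2|$ quadratic forms $\{Q_j:x_j\in V_2\}$ in $\P^{|V_1|-1}$. By the projective dimension theorem, $|V_2|$ hypersurfaces in $\P^{|V_1|-1}$ have nonempty intersection whenever $|V_2|\le|V_1|-1$, which is exactly the hypothesis $|V_1|>|V_2|$; note $|V_1|\ge 1$ here, so the projective space and its intersection make sense. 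Any point of this intersection lifts to a nonzero $p\in L$ at which all seven partials vanish, so $X_F$ is singular.

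The only genuine obstacle is the monomial bookkeeping in the middle step: one must verify that the asymmetry between $V_2$ and $V_3$ built into condition (i) is precisely what forces the $V_3$-derivatives to vanish on $L$ while charging each $V_2$-variable exactly one quadric constraint. Once this is in place, everything else — Euler's identity and the dimension count $|V_2|<|V_1|$ — is routine.
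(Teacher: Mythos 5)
Your argument is correct, and the bookkeeping is complete: on the linear subspace $L=\{x_j=0 \text{ for } x_j\in V_2\cup V_3\}$ the partials indexed by $V_1\cup V_3$ vanish identically, the partials indexed by $V_2$ restrict to $|V_2|$ quadrics in $\P^{|V_1|-1}$, and $|V_2|\le |V_1|-1$ guarantees a common nonzero zero, which by Euler's identity is a singular point of $X_F$. The paper itself gives no proof of this lemma (it is quoted from \cite{GLM23}), but your argument is exactly the standard one behind this family of ``combinatorial'' non-smoothness tests, so there is nothing to add.
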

		
	\subsection{Abelian $(5,3)$-groups}\label{ss:Ab}
	We classify abelian $(5,3)$-groups in this subsection. 
	As a direct consequence of \cite[Theorem 1.3]{GL13}, we have the following
	\begin{proposition}\label{pp:porder}
		Let $g$ be an element of primary order in a $(5,3)$-group. Then ${\rm ord}(g)=2^a,3^b,5,7,11$ or $43$, where $a,b>0$.
	\end{proposition}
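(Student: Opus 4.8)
The plan is to reduce the claim to a congruence analysis of the eigenvalues of a single lifted automorphism; this is exactly the mechanism behind \cite[Theorem 1.3]{GL13}, specialized to $(n,d)=(5,3)$. Let $X_F\subset\P^7$ be a smooth cubic fivefold whose automorphism group contains the given $(5,3)$-group, and let $g$ have order $q=p^k$. Since $\gcd(3,8)=1$, Theorem \ref{thm:F-liftable} tells us $\mathrm{Aut}(X_F)$ is $F$-liftable, so $g$ lifts to a matrix $A\in\GL(7,\C)$ with $A(F)=F$ and $\mathrm{ord}(A)=q$. Being of finite order, $A$ is diagonalizable, and after a linear change of coordinates I may assume $A=\Diag(\xi_q^{a_1},\dots,\xi_q^{a_7})$. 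As $q=p^k$ is a prime power and $\mathrm{ord}(A)=q$, some weight $a_i$ must be coprime to $p$.

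Smoothness enters through Lemma \ref{lem:nsmcubic}(i): for each $i\in\{1,\dots,7\}$ there is an index $\sigma(i)$ with $x_i^2x_{\sigma(i)}\in F$ (where $\sigma(i)=i$ corresponds to $x_i^3\in F$). Because $A$ is diagonal and $A(F)=F$, every monomial occurring in $F$ is an eigenvector of eigenvalue $1$; applied to $x_i^2x_{\sigma(i)}$ this gives the congruence
$$2a_i+a_{\sigma(i)}\equiv 0\pmod q,\qquad\text{equivalently}\qquad a_{\sigma(i)}\equiv(-2)a_i\pmod q.$$
Thus $\sigma$ is a self-map of $\{1,\dots,7\}$ along whose edges the weights are multiplied by $-2$.

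Next I would run the cycle analysis, handling only odd $p$ (for $p=2$ the order $q=2^k$ is already of the permitted form $2^a$, so nothing is needed). Choose $i^*$ with $a_{i^*}$ coprime to $p$. Iterating $\sigma$ from $i^*$ must eventually enter a cycle, say at $c=\sigma^m(i^*)$; since $p$ is odd, $-2$ is a unit modulo $q$, so $a_c\equiv(-2)^m a_{i^*}$ remains coprime to $p$. Traversing the cycle through $c$, of some length $\ell\le 7$, yields $(-2)^\ell a_c\equiv a_c\pmod q$, and cancelling the unit $a_c$ gives $(-2)^\ell\equiv 1\pmod q$. Hence $q$ divides one of the integers $(-2)^\ell-1$ for $\ell=1,\dots,7$, namely $-3,\,3,\,-9,\,15,\,-33,\,63,\,-129$; their only odd prime-power divisors are $3,\,9,\,5,\,7,\,11,\,43$. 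Together with the even case this yields $\mathrm{ord}(g)\in\{2^a,3^b,5,7,11,43\}$.

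The only delicate point is the combinatorics of $\sigma$: smoothness supplies a function rather than a permutation, so I must confirm that a weight coprime to $p$ propagates into an honest cycle, which relies on $-2$ being invertible modulo $p^k$ and hence on $p$ being odd. The remaining step is the finite factorization above, and no realizability of the individual values is required, since the proposition only asserts a necessary condition on $\mathrm{ord}(g)$.
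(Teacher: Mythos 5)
Your proof is correct, but it is worth noting how it relates to the paper: the paper gives no argument at all for this proposition, deriving it as ``a direct consequence of \cite[Theorem 1.3]{GL13}'', whereas you have reconstructed, specialized to $(n,d)=(5,3)$, the eigenvalue-congruence argument that underlies that cited theorem. The mechanism you use is exactly the right one: lift $g$ to a diagonal matrix $A$ of order $q=p^k$ fixing $F$ (legitimate since $\gcd(3,7)=1$ makes $\mathrm{Aut}(X_F)$ $F$-liftable by Theorem \ref{thm:F-liftable}), use Lemma \ref{lem:nsmcubic}(i) to produce the self-map $\sigma$ with $a_{\sigma(i)}\equiv -2a_i \pmod q$, and for odd $p$ propagate a unit weight into a cycle of length $\ell\le 7$ to get $q\mid (-2)^\ell-1$; the factorizations of $-3,3,-9,15,-33,63,-129$ then give exactly $\{3,9,5,7,11,43\}$, which together with the trivial $p=2$ case yields the statement. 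Two small slips do not affect the argument but should be fixed: a cubic fivefold lives in $\P^6$, not $\P^7$ (it is cut out by $F(x_1,\dots,x_7)$, so the relevant matrix group is $\GL(7,\C)$ as you correctly use afterwards), and the liftability criterion is $\gcd(d,m)=\gcd(3,7)=1$, not $\gcd(3,8)$. What your approach buys is a self-contained proof independent of \cite{GL13}; what the paper's citation buys is brevity and the full strength of the general statement for arbitrary $(n,d)$. Note also that your argument actually proves slightly more than claimed for $p=3$ (only orders $3$ and $9$ can occur), which is consistent with the proposition since it only asserts a necessary condition.
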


	The Sylow $p$-subgroups of $(5,3)$-groups with $p>3$ are known.
	\begin{proposition}[{cf. \cite[Example 4.3]{GLM23}}]\label{prop:sylowp}
		Let $G$ be a nontrivial $p$-group with $p\in\{5,7,11,43\}$. If $G$ is a $(5,3)$-group,  then $G\cong C_p$.
	\end{proposition}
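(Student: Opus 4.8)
The plan is to reduce the statement to showing that $C_p\times C_p$ is \emph{not} a $(5,3)$-group, and then to rule this out by a short weight-counting argument based on the smoothness criterion Lemma \ref{lem:nsmcubic}(i).

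First I would record that every nontrivial element of $G$ has order exactly $p$. Since $G$ is a $p$-group, each element has order a power of $p$; but by Proposition \ref{pp:porder} an element of primary order in a $(5,3)$-group can only have order in $\{2^a,3^b,5,7,11,43\}$, and for $p\in\{5,7,11,43\}$ the only power of $p$ on this list is $p$ itself. Hence $G$ has exponent $p$. If $G$ were not cyclic of order $p$, then $|G|>p$, and I would produce a copy of $C_p\times C_p$ inside $G$: choose a central element $a$ of order $p$ (the center of a nontrivial $p$-group is nontrivial) and any $b\in G\setminus\langle a\rangle$ of order $p$; since $a$ is central and $\langle a\rangle\cap\langle b\rangle=1$, one gets $\langle a,b\rangle\cong C_p\times C_p$. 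As every faithful $(5,3)$-representation of $G$ restricts to one of this subgroup, it suffices to show $C_p\times C_p$ is not a $(5,3)$-group.

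Suppose toward a contradiction that $C_p\times C_p$ admits a $(5,3)$-representation; by Theorem \ref{thm:F-liftable} (using $\gcd(3,7)=1$) this amounts to a faithful $\rho\colon C_p\times C_p\to\GL(7,\C)$ preserving a smooth cubic $F$. Being abelian, $\rho$ is diagonalizable, so after a coordinate change each $x_i$ is a weight vector with weight $w_i\in\widehat{C_p\times C_p}\cong\F_p^2$, and a monomial is $\rho$-invariant precisely when its total weight vanishes. Faithfulness of $\rho$ forces $W:=\{w_1,\dots,w_7\}$ to span $\F_p^2$. On the other hand, smoothness together with Lemma \ref{lem:nsmcubic}(i) guarantees that for each $i$ there is a $j$ with $x_i^2x_j\in F$, whence $2w_i+w_j=0$, i.e.\ $-2w_i\in W$. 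Thus $W\setminus\{0\}$ is stable under multiplication by $-2$.

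The crux is the following counting. Since $-2$ acts on $\F_p^2$ as a scalar, each orbit of $\langle -2\rangle$ on $\F_p^2\setminus\{0\}$ lies on a single line through the origin and has size $d:=\Ord_p(-2)$, the multiplicative order of $-2$ in $\F_p^\times$. As $W$ spans $\F_p^2$, its nonzero part meets at least two distinct lines through the origin, and by $(-2)$-stability each such line contributes a full orbit; hence $|W\setminus\{0\}|\ge 2d$. Since $W$ has at most $7$ elements, this gives $2d\le 7$, i.e.\ $d\le 3$. But a direct check yields $\Ord_p(-2)=4,6,5,7$ for $p=5,7,11,43$ respectively, all $\ge 4$ --- a contradiction. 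Therefore $C_p\times C_p$ is not a $(5,3)$-group, and by the reduction above $G\cong C_p$. The main obstacle is isolating the right invariant: once one observes that smoothness forces $W\setminus\{0\}$ to be $(-2)$-stable while faithfulness forces it to span $\F_p^2$, the clash with the (easily computed, and uniformly large) order of $-2$ is immediate.
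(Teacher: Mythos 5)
Your proof is correct, and it is genuinely self-contained, whereas the paper offers no argument at all for this proposition: it is stated with only a pointer to \cite[Example 4.3]{GLM23} (much as Proposition \ref{pp:porder} is deferred to \cite{GL13}). Your reduction is sound: Proposition \ref{pp:porder} forces exponent $p$, the centre argument produces a copy of $C_p\times C_p$ in any non-cyclic candidate, and Theorem \ref{thm:F-liftable} converts the $(5,3)$-group hypothesis into a faithful diagonalizable representation preserving a smooth cubic. The key step --- that Lemma \ref{lem:nsmcubic}(i) makes the set of nonzero weights $W\setminus\{0\}\subset\F_p^2$ stable under multiplication by $-2$, while faithfulness makes $W$ span $\F_p^2$ and hence meet two distinct lines, so that $7\ge |W\setminus\{0\}|\ge 2\,\Ord_p(-2)$ --- is exactly the kind of eigenvalue/weight analysis that underlies the cited example, but packaged as a clean uniform inequality; the values $\Ord_p(-2)=4,6,5,7$ for $p=5,7,11,43$ check out (e.g.\ $(-2)^7=-128\equiv 1\pmod{43}$). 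What your route buys is a verifiable two-line certificate for each prime in place of an external reference, and it only consumes ingredients already present in the paper (Proposition \ref{pp:porder}, Lemma \ref{lem:nsmcubic}, Theorem \ref{thm:F-liftable}); what it does not do is recover the finer information in \cite{GLM23} about which $C_p$-representations actually occur, which the paper needs separately (e.g.\ in the proof of Theorem \ref{thm:abelian}). One small presentational point: it is worth saying explicitly that when the index $j$ with $x_i^2x_j\in F$ equals $i$ one gets $3w_i=0$, hence $w_i=0$ since $p\neq 3$, so this case does not disturb the stability of $W\setminus\{0\}$.
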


	Let $\mathcal{G}_a$ be the set of subgroups of the following $29$ abelian groups:
	$C_{43}$, $C_{9}\times C_5$, $C_5 \times C_4 \times C_3$, $C_9 \times C_7$, $C_{64}$, $C_{11}\times C_3 \times C_2$, $C_{9} \times C_8$, $C_{9}^2$, $C_5 \times C_3^2\times C_2$, $C_{32}\times C_3$, $C_{16}\times C_{3} \times C_2$, $C_8 \times C_4 \times C_3$, $C_{11}\times C_3^2$, $C_{9} \times C_4 \times C_3$, $C_{9} \times C_3 \times C_2^2$, $C_{5}\times C_3^3$, $C_{16}\times C_{3}^2$, $C_{8} \times C_3^2 \times C_2$, $C_4^2 \times C_3^2$, $C_{4} \times C_3^2 \times C_2^2$, $C_{9} \times C_3^2 \times C_2$, $C_{8} \times C_3^3$, $C_{4} \times C_3^3 \times C_2$, $C_3^3 \times C_2^3$, $C_{9} \times C_3^3$, $C_{4} \times C_3^4$, $C_3^4 \times C_2^2$, $C_{3}^5 \times C_2$, $C_3^6$. 
	
	By computing abelian subgroups of $G_{X_i}$, we have the following
	
	\begin{lemma}\label{lem:ab53}
	An abelian group $G$ is in $\mathcal{G}_a$ if and only if $G$ is isomorphic to an abelian subgroup of $G_{X_i}$ for some $i\in \{1,2,...,20\}$. In particular, all groups in $\mathcal{G}_a$ are $(5,3)$-groups.
	\end{lemma}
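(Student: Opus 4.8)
The plan is to reduce the statement to a finite computation in GAP \cite{GAP}, exploiting the fact that both sides of the claimed equivalence are downward closed under passage to subgroups. Write $\mathcal{A}$ for the set of isomorphism classes of abelian subgroups of the groups $G_{X_i}$, $1\le i\le 20$. If $H$ is an abelian subgroup of some $G_{X_i}$ and $H'\le H$, then $H'$ is again an abelian subgroup of $G_{X_i}$; hence $\mathcal{A}$ is closed under taking subgroups. By definition $\mathcal{G}_a$ is the downward closure of the $29$ listed groups, so it is closed under subgroups as well. Consequently the lemma amounts to the two inclusions $\mathcal{G}_a\subseteq\mathcal{A}$ and $\mathcal{A}\subseteq\mathcal{G}_a$, each of which I would reduce to checking only the maximal elements.

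First I would establish $\mathcal{G}_a\subseteq\mathcal{A}$. For this it suffices to exhibit, for each of the $29$ groups $L$ in the list, an explicit abelian subgroup of some $G_{X_i}$ isomorphic to $L$; every subgroup of $L$ is then automatically an abelian subgroup of the same $G_{X_i}$, and membership in $\mathcal{A}$ follows from the downward closure of $\mathcal{A}$. These embeddings are produced from the explicit matrix generators of the $G_{X_i}$ recorded in the ancillary file {\ttfamily Examples4.1.txt} to \cite{YYZ23} (together with the description of $G_{X_6}$). The element-order constraints of Proposition \ref{pp:porder} and the Sylow constraints of Proposition \ref{prop:sylowp} serve as a consistency check, since every group in $\mathcal{G}_a$ must have element orders among $2^a,3^b,5,7,11,43$ and cyclic Sylow $p$-subgroups for $p\in\{5,7,11,43\}$.

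For the reverse inclusion $\mathcal{A}\subseteq\mathcal{G}_a$, I would compute, for each $i$, representatives of the conjugacy classes of maximal abelian subgroups of $G_{X_i}$ using the subgroup-lattice routines of GAP, and then verify that every isomorphism type so obtained embeds into one of the $29$ listed groups (matching via \texttt{StructureDescription} and isomorphism/embedding tests). Because $\mathcal{A}$ is closed under subgroups, testing the maximal abelian subgroups suffices, and this completes the identification $\mathcal{A}=\mathcal{G}_a$; the full output, including the promised list of all subgroups of the $\mathrm{Aut}(X_i)$, is contained in the ancillary file {\ttfamily 53-groups.txt} to \cite{YYZ23}. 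Finally, the ``in particular'' assertion is immediate: each $G_{X_i}$ acts faithfully on the smooth cubic fivefold $X_i$ of Subsection \ref{mainex} and is thus a $(5,3)$-group, so every one of its abelian subgroups is a $(5,3)$-group; hence every group in $\mathcal{G}_a=\mathcal{A}$ is a $(5,3)$-group.

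The main obstacle is purely computational. The largest group $G_{X_1}\cong C_3^6\rtimes S_7$ of order $3674160$ has a huge subgroup lattice, so a naive enumeration of all conjugacy classes of subgroups is expensive. I would therefore restrict attention to maximal abelian subgroups from the outset—equivalently, work through centralizers of well-chosen abelian subgroups—which keeps the enumeration tractable while still determining $\mathcal{A}$ completely by downward closure.
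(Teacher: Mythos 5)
Your proposal is correct and follows essentially the same route as the paper: the lemma is established by a GAP computation of the (abelian) subgroups of the twenty groups $G_{X_i}$, recorded in the ancillary file {\ttfamily 53-groups.txt}, together with the observation that each $G_{X_i}$ is contained in $\Aut(X_i)$ so all of its subgroups are $(5,3)$-groups. Your explicit reduction to maximal abelian subgroups via downward closure is a reasonable organization of the same finite check.
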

	
 We say $A\in \GL(m, \C)$ is {\it semi-permutation} if $A$ is a diagonal matrix up to permutation of columns, or equivalently, $A$ has exactly $m$ nonzero entries. 
  
Similar to Example \ref{ex:C2}, using Lemmas \ref{lem:nsmcubic} and \ref{lem:cri}, we can compute $(5,3)$-representations, up to $3$-equivalence, of groups in $\mathcal{G}_a$ with the help of computer algebra. Based on this, we obtain the following result.	
	
	\begin{theorem} \label{thm:control}
		If a $(5,3)$-group $G$ contains a subgroup isomorphic to one of the following groups:
		$C_{11}$, $C_9 \times C_3$, $C_9 \times C_4$, $C_9 \times C_2^2$,  $C_{43}$, $C_9 \times C_5$, $C_5 \times C_3^2$, $C_5 \times C_4 \times C_3$, $C_9 \times C_7$, $C_{64}$, $C_3^2 \times C_2^3$, $C_4 \times C_3^2 \times C_2$, $C_3^4$, $C_{32}\times C_3$, $C_{16} \times C_3 \times C_2$, $C_8 \times C_4 \times C_3$, $C_4 \times C_3^3$, $C_3^3 \times C_2^2$, $C_{16}\times C_3^2$, 
		then $G$ is isomorphic to a subgroup of $G_{X_i}$ for some $i\in\{1,2,\dots,11,19,20\}$.

	\end{theorem}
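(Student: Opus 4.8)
The plan is to reduce the statement to the classification, just completed above via Lemmas \ref{lem:nsmcubic} and \ref{lem:cri}, of the $(5,3)$-representations of the groups in $\mathcal{G}_a$ up to $3$-equivalence. Since $X_F$ is a smooth cubic fivefold, Theorem \ref{thm:F-liftable} ensures that the $(5,3)$-group $G$ admits a $(5,3)$-representation $\rho\colon G\to\GL(7,\C)$; fix a smooth cubic form $F$ with $\rho(G)\subseteq G_F$ and $\pi\circ\rho$ injective, so that $G\cong\pi\rho(G)\subseteq\Aut(X_F)$. If $H\le G$ is isomorphic to one of the nineteen listed groups, then $\rho|_H$ is a $(5,3)$-representation of $H$ and $\rho(H)\subseteq\rho(G)\subseteq G_F$. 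The strategy is to run through the finitely many $3$-equivalence classes of $(5,3)$-representations of each such $H$—invoking Lemma \ref{lem:d-equiv} to pass to class representatives—and to prove in every case that $\Aut(X_F)$ embeds into $G_{X_i}$ for some $i\in\{1,\dots,11,19,20\}$; since $G\hookrightarrow\Aut(X_F)$, the theorem then follows.

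I would split the nineteen groups into two families. The first comprises the fifteen groups other than $C_{43}$, $C_{64}$, $C_{32}\times C_3$ and $C_9\times C_7$; for these I would show that every smooth cubic fixed by $\rho(H)$ admits a partition of type $(2,5)$ or $(3,4)$, whence Theorem \ref{thm:5+2} gives $\Aut(X_F)\hookrightarrow G_{X_i}$ with $i\in\{1,\dots,9\}$. For the fourteen of these groups possessing an element of order $3$, the enumeration should exhibit in each representation an order-$3$ matrix of $\rho(H)$ whose eigenvalue multiplicities are $\{5,2,0\}$ or $\{4,3,0\}$; after rescaling by a cube root of unity (which preserves membership in $G_F$) such a matrix is conjugate to $\Diag(\xi_3,\xi_3,1,1,1,1,1)$ or $\Diag(\xi_3,\xi_3,\xi_3,1,1,1,1)$, and Corollary \ref{cor:partitionbyA} then produces the partition. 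The remaining group $C_{11}$, which has no element of order $3$, I would handle directly: the covering condition of Lemma \ref{lem:nsmcubic} forces the nonzero weights of the order-$11$ generator to form a single orbit of size five under $w\mapsto-2w$ on $\Z/11$, leaving exactly two weight-zero coordinates that can occur only through their own cubes; hence $F$ splits as a $(2,5)$-partition whose five-variable summand is the Klein cubic threefold.

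The second family consists of $C_{43}$, $C_{64}$, $C_{32}\times C_3\cong C_{96}$ and $C_9\times C_7\cong C_{63}$, whose invariant smooth cubics admit no $(2,5)$- or $(3,4)$-partition, so Theorem \ref{thm:5+2} is unavailable. Here I would argue that, up to $3$-equivalence, each such $H$ carries essentially one $(5,3)$-representation and that the corresponding space of invariant smooth cubics is rigid enough to force $F$ to be projectively equivalent to the chain, respectively cyclic-chain, cubic $F_{20}$, $F_{19}$, $F_{10}$ or $F_{11}$. This identification relies on the differential method (Theorem \ref{thm:dm}), combined with the non-smoothness tests of Lemmas \ref{lem:nsmcubic} and \ref{lem:cri}, to read off the monomials of $F$ from the eigenvalues of a generator. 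Once $X_F\cong X_i$ is established, Theorem \ref{thm:autex} yields $\Aut(X_F)=\Aut(X_i)=G_{X_i}$ for the appropriate $i\in\{10,11,19,20\}$, and hence $G\hookrightarrow G_{X_i}$.

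The principal obstacle is this second family: with no partition at hand, the computation of $\Aut(X_F)$ cannot be delegated to the lower-dimensional classifications of $(2,3)$- and $(3,3)$-groups, and one must instead verify \emph{directly}, through the differential method, that the full automorphism group of the rigid invariant cubic is exactly the cyclic-type group $G_{X_i}$, with no unexpected symmetry enlarging it. A secondary, more bookkeeping, difficulty pervading the first family is to confirm by computer that the enumerated $3$-equivalence classes are exhaustive and that a partitioning order-$3$ element—or, for $C_{11}$, a weight-zero pair of coordinates—genuinely occurs in each one. Granting these verifications, the two families together exhaust the nineteen groups and establish the theorem.
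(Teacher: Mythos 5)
Your proposal follows essentially the same route as the paper: classify the $(5,3)$-representations of the nineteen groups up to $3$-equivalence (a computer-aided step in both cases), then either exhibit a $(2,5)$- or $(3,4)$-type partition of the invariant cubic and invoke Theorem \ref{thm:5+2}, or --- for $C_{43}$, $C_{64}$, $C_{32}\times C_3$ and $C_9\times C_7$ --- pin the invariant cubic down to one of the chain/Klein forms and apply Theorem \ref{thm:autex}. The only substantive difference is cosmetic: you detect the partition through a single non-special order-$3$ element via Corollary \ref{cor:partitionbyA}, whereas the paper reads it off directly from the full set of $\rho(G_0)$-invariant monomials; both reduce to the same finite verification.
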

	
	\begin{proof}
             First we explain the strategies of the proof. Let $X_F$ be a smooth cubic fivefold defined by $F$. Suppose $G< {\rm Aut}(X_F)$ contains $G_0$, where $G_0$ is one of the $19$ abelian groups in the list. We classify $(5,3)$-representations $\rho$ of $G_0$ up to $3$-equivalence. We may assume that the matrices in $\rho(G_0)$ are diagonal since $G_0$ is abelian. By computing all cubic monomials preserved by $\rho(G_0)$, we infer that either $F$ has an $(a_1,a_2)$-type partition, where $(a_1,a_2)=(2,5),(3,4)$, or $G_{F}$ consists of semi-permutation matrices by the differential method. Then by Theorems \ref{thm:autex} and \ref{thm:5+2}, we conclude that ${\rm Aut}(X_{F})$ is isomorphic to a subgroup of ${\rm Aut}(X_i)=G_{X_i}$ for some $i\in\{1,2,\dots,11,19,20\}$. We give details for two typical cases $G_0=C_{11}, C_9\times C_5$ and the other cases are similar (representatives of all $3$-equivalence classes of $(5,3)$-representations of the $19$ abelian groups can be found in the ancillary file {\ttfamily Theorem5.12.txt} to \cite{YYZ23}).
             
             Case $G_0=C_{11}$: Up to $3$-equivalence, $G_0$ has exactly one $(5,3)$-representation $\rho$ given by $\rho (G_0)=\langle A \rangle $, where $ A=\Diag(\xi_{11}^9,\xi_{11}^5,\xi_{11}^4,\xi_{11}^3,\xi_{11},1,1) $. Then the set of cubic monomials preserved by $A$ consists of $x_7^3$, $x_6 x_7^2$, $x_6^2 x_7$, $x_6^3$, $x_3^2 x_4$, $x_2 x_4^2$, $x_2^2 x_5$, $x_1 x_5^2$, $x_1^2 x_3$. Thus, by $A(F)=F$, we have that $F$ can be partitioned into $H(x_1,x_2,x_3,x_4,x_5)+K(x_6,x_7)$. By Theorem \ref{thm:5+2}, ${\rm Aut}(X_{F})$ is isomorphic to a subgroup of $G_{X_i}$ for some  $i\in\{1,\dots,6\}$.

		Case $G_0=C_9\times C_5$: Up to $3$-equivalence, $G_0$ has exactly one $(5,3)$-representation $\rho$ given by $\rho(G_0)=\langle A, B\rangle$, where $A=\Diag(\xi_{9},\xi_{9}^{7},\xi_9^4,1,1,1,1)$ and $B=\Diag(1,1,1,\xi_5,\xi_5^3,\xi_5^4,\xi_5^2)$. Then the cubic monomials preserved by $A$ and $B$ are $x_1^2x_2$, $x_2^2x_3$, $x_3^2x_1$, $x_4^2x_5$, $x_5^2x_6$, $x_6^2x_7$, $x_7^2x_4$. Since $F$ is preserved by $\rho(G_0)$, $F$ can be partitioned into $F=H(x_1,x_2,x_3)+K(x_4,x_5,x_6,x_7)$, and by Theorem \ref{thm:5+2}, there exists $i\in\{1,\dots,9\}$ such that $\Aut(X_F)$ is isomorphic to a subgroup in $G_{X_i}$.
	\end{proof}
	
	Now we are ready to classify abelian $(5,3)$-groups.
	\begin{thm}\label{thm:abelian}
		Let $G$ be an abelian group. Then $G$ is a $(5,3)$-group if and only if $G\in \mathcal{G}_a$.
	\end{thm}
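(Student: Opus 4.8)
The forward implication — that every group in $\mathcal{G}_a$ is a $(5,3)$-group — is already recorded in Lemma \ref{lem:ab53}, so the entire content lies in the reverse implication. Here the plan is to show that an arbitrary abelian $(5,3)$-group $G$ is isomorphic to an abelian subgroup of some $G_{X_i}$; by the equivalence in Lemma \ref{lem:ab53} this is exactly the assertion $G\in\mathcal{G}_a$.

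First I would apply the structure theorem for finite abelian groups to write $G$ as the internal direct product of its Sylow subgroups. By Proposition \ref{pp:porder} the only primes dividing $|G|$ are $2,3,5,7,11,43$, so $G\cong G_2\times G_3\times G_5\times G_7\times G_{11}\times G_{43}$, and by Proposition \ref{prop:sylowp} each of $G_5,G_7,G_{11},G_{43}$ is trivial or cyclic of the corresponding prime order. This reduces the problem to controlling the $2$-part $G_2$ and the $3$-part $G_3$, together with at most one factor each of $C_5,C_7,C_{11},C_{43}$.

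Next I would dichotomize according to whether $G$ contains a subgroup isomorphic to one of the nineteen \emph{trigger} groups appearing in Theorem \ref{thm:control}. If it does, then Theorem \ref{thm:control} embeds $G$ into $G_{X_i}$ for some $i\in\{1,\dots,11,19,20\}$, and since $G$ is abelian, Lemma \ref{lem:ab53} immediately yields $G\in\mathcal{G}_a$. The substance of the argument is therefore the complementary case, in which $G$ contains none of the nineteen triggers, and I would treat it by turning each missing trigger into a numerical constraint on the invariants of $G$. For example, absence of $C_{11}$ and $C_{43}$ forces $G_{11}=G_{43}=1$; absence of $C_{64}$, $C_{32}\times C_3$, $C_{16}\times C_3\times C_2$, $C_{16}\times C_3^2$ and $C_8\times C_4\times C_3$ bounds the exponent and $2$-rank of $G_2$ once $3\mid|G|$; absence of $C_3^4$, $C_3^3\times C_2^2$, $C_3^2\times C_2^3$ caps the $3$-rank and couples it to the $2$-rank; and the triggers involving $C_9$ or $C_5$ restrict what may accompany an element of order $9$ or an element of order $5$. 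Assembling these constraints leaves only finitely many admissible tuples $(G_2,G_3,G_5,G_7)$, each of small order.

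For each surviving tuple I would then exhibit $G$ as an abelian subgroup of one of the twenty-nine groups defining $\mathcal{G}_a$ (equivalently, check via Lemma \ref{lem:ab53} that it is an abelian subgroup of a suitable $G_{X_i}$), which completes the classification. The main obstacle is precisely this final bookkeeping: reorganizing the nineteen exclusions into a clean, genuinely exhaustive list of admissible invariant-tuples without overlooking a borderline combination, and matching each admissible tuple to the correct target among the twenty-nine groups. Everything preceding that step is purely structural and follows directly from Propositions \ref{pp:porder} and \ref{prop:sylowp}, Theorem \ref{thm:control}, and Lemma \ref{lem:ab53}.
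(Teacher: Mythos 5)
Your forward direction and your ``trigger'' dichotomy are fine: if an abelian $(5,3)$-group contains one of the nineteen groups of Theorem \ref{thm:control}, it embeds in some $G_{X_i}$ and Lemma \ref{lem:ab53} puts it in $\mathcal{G}_a$. The genuine gap is in your treatment of the complementary case. You propose to convert the absence of each trigger into numerical constraints on the invariants of $G$ and then to ``exhibit $G$ as an abelian subgroup of one of the twenty-nine groups,'' describing the remaining difficulty as bookkeeping. But this last step is not bookkeeping --- it is impossible for some of the admissible tuples. There are abelian groups that contain none of the nineteen triggers and yet do not lie in $\mathcal{G}_a$: for instance $C_2^4$, $C_{27}$, $C_8^2$, $C_7\times C_2$, $C_5\times C_7$, and several others. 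Your constraints cannot eliminate these, and they are not subgroups of any of the twenty-nine groups, so no amount of matching will place them in $\mathcal{G}_a$. What is needed for them is a proof that they are \emph{not} $(5,3)$-groups, i.e.\ that they admit no $(5,3)$-representation; this requires an entirely different kind of argument (diagonalize a faithful lift, list the invariant cubic monomials, and invoke the smoothness tests such as Lemma \ref{lem:nsmcubic}), which your proposal never supplies.

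This missing step is in fact the whole content of the paper's proof. The paper argues by minimal counterexample: it suffices to rule out abelian groups $G\notin\mathcal{G}_a$ all of whose proper subgroups lie in $\mathcal{G}_a$; by Theorem \ref{thm:control} and Lemma \ref{lem:ab53} such a $G$ contains no trigger, and an enumeration leaves exactly thirteen candidates ($C_{27}$, $C_2^4$, $C_8\times C_2^2$, $C_4^2\times C_2$, $C_{32}\times C_2$, $C_{16}\times C_4$, $C_8^2$, $C_7\times C_2$, $C_5\times C_2^2$, $C_5\times C_7$, $C_8\times C_5$, $C_4\times C_2^2\times C_3$, $C_7\times C_3^2$), each of which is then shown to have no $(5,3)$-representation by restricting to proper subgroups. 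The model case is $C_7\times C_2$: the unique $(5,3)$-representation of $C_7$ forces seven specific monomials into any invariant smooth cubic, and invariance of those monomials forces the order-$2$ generator to act trivially, a contradiction. Without an analogue of this smoothness-based exclusion, your argument cannot close.
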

	
	\begin{proof}
	It suffices to show that if $G$ is an abelian group such that (1) all of its proper subgroups are in $\mathcal{G}_a$ and (2) $G\notin \mathcal{G}_a$, then $G$ is not a $(5,3)$-group. Then by Lemma \ref{lem:ab53} and Theorem \ref{thm:control}, we are reduced to rule out the following $13$ groups: $C_{27}$, $C_2^4$, $C_8\times C_2^2$, $C_4^2\times C_2$, $C_{32}\times C_2$, $C_{16}\times C_4$, $C_8^2$, $C_7\times C_2$, $C_5\times C_2^2$, $C_5\times C_7$, $C_8\times C_5$, $C_4\times C_2^2\times C_3$, $C_7\times C_3^2$. Then the idea is to prove that these groups have no $(5,3)$-representation by considering $(5,3)$-representations of their proper subgroups. We give details for $C_7\times C_2$ and the other cases are similar.
	
	Suppose $\rho$ is a $(5,3)$-representation of $G=C_7\times C_2$ with $\rho(G)$ preserving a smooth cubic form $F$. Note that $C_7$ has only one $(5,3)$-representation up to $3$-equivalence given by $A_7:=\Diag(\xi_{7}^6,\xi_{7}^5,\xi_{7}^4,\xi_{7}^3,\xi_{7}^2,\xi_7,1) $ (representatives of all $3$-equivalence classes of $(5,3)$-representations of cyclic groups of primary orders can be found in the ancillary file {\ttfamily Theorem5.13.txt} to \cite{YYZ23}). By considering the restriction $\rho | C_7$, we may assume that $\rho(G)=\langle A_7, A_2\rangle$, where $A_2=$ $\Diag(\pm 1,$ $\pm 1,...,\pm 1)$. The cubic monomials preserving by $A_7$ are $x_7^3$, $x_4x_5^2$, $x_4^2x_6$, $x_2x_6^2$, $x_2^2x_3$, $x_1x_3^2$, $x_1^2x_5$, $x_3x_5x_6$, $x_3x_4x_7$, $x_2x_5x_7$, $x_1x_6x_7$,  $x_1x_2x_4$. By Lemma \ref{lem:nsmcubic}, the first $7$ monomials must be in $F$. From this, by $A_2(F)=F$, we deduce that $A_2=\Diag(1, 1,1,1,1,1,1)$, which is a contradiction since $\rho(G)\cong C_7\times C_2$. \end{proof}

	\subsection{Solvable $(5,3)$-groups}\label{sec:solva}
	In this subsection, a classification of solvable $(5,3)$-groups is presented. The classification begins with identifying all $2$-groups and $3$-groups followed by examining other solvable $(5,3)$-groups of order less than or equal to $2^5\cdot 3^4\cdot 5\cdot 7$. 
	
	\begin{definition}\label{def:special}
	Let $\rho: G\rightarrow {\rm GL}(7,\C)$ be a faithful linear representation of a finite group $G$ with character $\chi$. We say $\rho$ (resp. $\chi$) is a {\it special} representation (resp. {\it special} character) if there exists no $A\in \rho(G)$ such that $A$ is similar to either $\Diag(\xi_3^{a+1},\xi_3^{a+1},\xi_3^a,\xi_3^a,\xi_3^a,\xi_3^a,\xi_3^a)$ or $\Diag(\xi_3^{a+1},\xi_3^{a+1},\xi_3^{a+1},\xi_3^a,\xi_3^a,\xi_3^a,\xi_3^a)$ for some $a\in \{0,1,2\}$.
	\end{definition}
	
	 As a direct consequence of Corollary \ref{cor:partitionbyA}, the following lemma plays an important role in our strategy of ruling out groups.
	
	\begin{lemma}\label{lem:rulingout}
		Let $G$ be a finite group. Then $G$ is not a $(5,3)$-group if the following two conditions are satisfied:
		
		\begin{enumerate}
  \item [(1)] There exists no $1\le i\le 20$ such that $G$ is isomorphic to a subgroup of $G_{X_i}$;
  \item [(2)] $G$ admits no special $(5,3)$-representation.
 \end{enumerate}
	\end{lemma}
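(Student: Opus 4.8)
The plan is to argue by contradiction: assume $G$ \emph{is} a $(5,3)$-group and deduce that one of conditions (1), (2) must fail. By Theorem \ref{thm:F-liftable} together with the discussion preceding Definition \ref{def:(n,d)-rep}, being a $(5,3)$-group is equivalent to admitting a $(5,3)$-representation. So I would fix a $(5,3)$-representation $\rho: G\rightarrow \GL(7,\C)$ together with a smooth cubic form $F$ for which $\rho(G)$ is an $F$-lifting of a subgroup of $\Aut(X_F)$. By definition this means $\rho(G)\subseteq G_F$ and $\pi|_{\rho(G)}$ is injective, so that $G\cong \pi\rho(G)\subseteq \Aut(X_F)$.

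The argument then splits according to whether $\rho$ is special. If $\rho$ is special, then $G$ admits a special $(5,3)$-representation, which directly contradicts condition (2), and we are finished in this case. Hence the substantive case is the one in which $\rho$ is \emph{not} special: by Definition \ref{def:special} there then exists $A\in\rho(G)$ similar to $\Diag(\xi_3^{a+1},\xi_3^{a+1},\xi_3^a,\xi_3^a,\xi_3^a,\xi_3^a,\xi_3^a)$ or to $\Diag(\xi_3^{a+1},\xi_3^{a+1},\xi_3^{a+1},\xi_3^a,\xi_3^a,\xi_3^a,\xi_3^a)$ for some $a\in\{0,1,2\}$.

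The key step is to strip off the scalar factor $\xi_3^a$ while staying inside $G_F$. Since $(\xi_3 I_7)(F)=\xi_3^3 F=F$, we have $\xi_3 I_7\in G_F$, and therefore $\xi_3^{-a}A=(\xi_3^{-a}I_7)A\in G_F$ as well. For the first type this matrix is similar to $\Diag(\xi_3,\xi_3,1,1,1,1,1)$ and for the second to $\Diag(\xi_3,\xi_3,\xi_3,1,1,1,1)$. Now I would invoke Corollary \ref{cor:partitionbyA}: in either case $\Aut(X_F)$ is isomorphic to a subgroup of $G_{X_i}$ for some $i\in\{1,\dots,9\}$. Combined with the inclusion $G\cong\pi\rho(G)\subseteq\Aut(X_F)$ from the first paragraph, this shows that $G$ is isomorphic to a subgroup of $G_{X_i}$, contradicting condition (1).

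I expect the only genuine subtlety to be the scalar bookkeeping in the non-special case, namely confirming that after multiplying $A$ by $\xi_3^{-a}I_7$ one obtains an element of $G_F$ (and not merely of $\GL(7,\C)$) whose eigenvalue pattern matches exactly the hypothesis of Corollary \ref{cor:partitionbyA}. Everything else is a formal combination of the $F$-lifting dictionary (Lemma \ref{lem:exactseq}) with that corollary, so no computation should be required.
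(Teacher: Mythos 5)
Your argument is correct and is precisely the one the paper intends: the authors give no written proof, stating only that the lemma is "a direct consequence of Corollary \ref{cor:partitionbyA}," and your contradiction argument (special case contradicts (2); non-special case yields, after multiplying by $\xi_3^{-a}I_7\in G_F$, an element of $G_F$ to which Corollary \ref{cor:partitionbyA} applies, contradicting (1)) fills in exactly those details. The scalar bookkeeping you flag is handled correctly, since $\xi_3 I_7\in G_F$ for any cubic form $F$.
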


We use the following strategy to classify non-abelian $(5,3)$-groups.

\begin{strategy}\label{strategy}
Let $m$ be a positive integer. Suppose that all $(5,3)$-groups of orders $m^\prime<m$ satisfying $m^\prime \mid m$ have been found. We classify non-abelian $(5,3)$-groups of order $m$ as follows.

	\textbf{Step 1}: We compute the (finite) set $\mathcal{B}_m$ of  non-abelian groups $G$ of order $m$ satisfying the following conditions: 
	
	\begin{enumerate}
	\item [(1)] All proper subgroups of  $G$ are $(5,3)$-groups;
	
	\item [(2)] $G_{X_i}$ has no subgroup isomorphic to $G$ for all $1\le i\le 20$;
	
	\item [(3)] $G$ contains none of the $19$ abelian groups in Theorem \ref{thm:control}.
	\end{enumerate}
	If $\mathcal{B}_m=\emptyset$, then we are done. Otherwise, we do case-by-case check for groups in $\mathcal{B}_m$. For each $G\in \mathcal{B}_m$, go to \textbf{Step 2}.	
	
	\textbf{Step 2}:  Compute the (finite) set $\mathcal{R}_G$ of the $3$-equivalence classes of the special almost $(5,3)$-characters of $G$. If $\mathcal{R}_G=\emptyset$, then $G$ is ruled out by Lemmas \ref{lem:d-equiv} and \ref{lem:rulingout}. Otherwise, go to \textbf{Step 3}.
	
	\textbf{Step 3}: For each $\chi\in \mathcal{R}_G$, (i) we compute a representation $\rho$ affording $\chi$; (ii) we compute the cubic forms $F$ invariant by all matrices in $\rho (G)$; (iii) we prove that such forms $F$ are not smooth. 
	\end{strategy}	
	
	Here we explain why Strategy \ref{strategy} works out. Let $G$ be a non-abelian group of order $m$ such that $G_{X_i}$ has no subgroup isomorphic to $G$ for all $1\le i\le 20$. It suffices to show that $G$ is not a $(5,3)$-group if one of the following three statements is true: (a) $G\notin \mathcal{B}_m$; (b) $\mathcal{R}_G=\emptyset$; (c) each $\chi\in \mathcal{R}_G$ is not a $(5,3)$-character. Note that by assumption, (2) in \textbf{Step 1} holds for $G$. Thus, if $G\notin \mathcal{B}_m$, then $G$ does not satisfy either (1) or (3) in \textbf{Step 1}, which implies that $G$ is not a $(5,3)$-group by Theorem \ref{thm:control}. From now on, we may assume that $G\in \mathcal{B}_m$. Suppose $G$ is a $(5,3)$-group. Then by Lemma \ref{lem:rulingout}, $G$ has a special $(5,3)$-representation $\rho$. This representation gives rise to a special $(5,3)$-character $\chi$, which is of course a special almost $(5,3)$-character in $\mathcal{R}_{G}$, which implies that $\mathcal{R}_G\neq \emptyset$ and $\chi\in \mathcal{R}_{G}$ is a $(5,3)$-character (i.e., both (b) and (c) fail).
	
	\begin{remark}
		For orders $|G|=m$ coprime to $3$,  all representations of $G$ are special. However, for cases $3\mid m$, $G$ might have many non-special almost $(5,3)$-representations. Thus, focusing on special almost $(5,3)$-representations in \textbf{Step 2} and \textbf{Step 3} reduces the amount of calculations in our classification considerably.
	\end{remark}

\begin{remark}\label{rem:subtest}
	Following \cite{OY19}, we call the condition (1) in \textbf{Step 1} the sub-test. Like in \cite{OY19} and \cite{WY20}, for relevant group orders $m$, the sub-test often rules out most of the finite groups which are not $(5,3)$-groups.
\end{remark}

\begin{remark}
	The idea of ruling out candidate $(n,d)$-groups via restricting their characters to (abelian) subgroups is used in previous studies, e.g. \cite[Lemma 6.11]{OY19} for $(n,d)=(3,5)$ and \cite[Theorem 6.1]{WY20} for $(n,d)=(3,3)$.  In order to handle a larger list of relevant candidates of hypersurfaces and groups in the cases $(n,d)=(5,3), (4,3)$, we use this idea in a more systematic way (e.g., introducing new notions including (special) almost $(n,d)$-character and $d$-equivalence). For computation in the steps of Strategy \ref{strategy}, we use a mixture of GAP \cite{GAP}, Mathematica \cite{Wo}, and Sage \cite{Sage}. 
	The codes needed in Strategy \ref{strategy} are contained in the following ancillary files to \cite{YYZ23}: {\ttfamily GAPsubtest-53groups.txt}, {\ttfamily LHsnewLHsnewred.txt}, {\ttfamily GAPcodesnewforMath.txt}, {\ttfamily CubicFivefolds.m}.
	\end{remark}                

As illustrations, we apply Strategy \ref{strategy} to determine Sylow subgroups of $(5,3)$-groups.
 \begin{thm}\label{thm:sylow23}
		Let $G$ be a $p$-group with $p=2,3$. If $G$ is a $(5,3)$-group, then $G$ is isomorphic to a subgroup of $G_{X_i}$ for some $1\le i\le 20$.
	\end{thm}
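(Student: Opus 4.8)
The plan is to apply the inductive scheme of Strategy \ref{strategy} separately to $p=2$ and $p=3$, inducting on the order $m=p^k$. The abelian case is the base and is already complete: by Theorem \ref{thm:abelian} every abelian $(5,3)$-group belongs to $\mathcal{G}_a$, and by Lemma \ref{lem:ab53} each group in $\mathcal{G}_a$ embeds in some $G_{X_i}$. Hence at each order I only have to treat \emph{non-abelian} $p$-groups of order $m$, under the inductive hypothesis that every $(5,3)$-group whose order is a proper divisor of $m$ already embeds in some $G_{X_i}$.

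Before running the induction I would record that it is finite. The restriction $p\in\{2,3\}$ is forced by Proposition \ref{pp:porder}, and the maximal abelian $(5,3)$-subgroups are bounded: by Theorem \ref{thm:abelian} the largest abelian $3$-group in $\mathcal{G}_a$ is $C_3^6$ and the largest abelian $2$-group is $C_{64}$, while the exclusion of $C_{27}$ there shows a $3$-group has exponent dividing $9$. A standard fact on $p$-groups then bounds $|G|$ a priori in terms of the order of its maximal abelian subgroup, so only finitely many orders occur. More usefully, among the nineteen abelian groups flagged in Theorem \ref{thm:control} the purely $3$-primary ones are $C_9\times C_3$ and $C_3^4$ and the purely $2$-primary one is $C_{64}$; any $(5,3)$-group containing one of these already embeds in some $G_{X_i}$. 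Thus a $p$-group not immediately disposed of must avoid these subgroups, which confines the induction to small orders, in practice up to the orders $3^8$ and $2^6$ of the Sylow subgroups of $G_{X_1}$ and $G_{X_{19}}$.

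The inductive step is exactly the three stages of Strategy \ref{strategy}. In \textbf{Step 1} I compute with GAP the set $\mathcal{B}_m$ of non-abelian groups of order $m$ whose proper subgroups are all $(5,3)$-groups (the sub-test of Remark \ref{rem:subtest}), which are not subgroups of any $G_{X_i}$, and which contain none of the nineteen abelian groups of Theorem \ref{thm:control}; as in \cite{OY19,WY20} the sub-test alone empties $\mathcal{B}_m$ for most $m$. For each surviving $G$ I carry out \textbf{Step 2}, listing the special almost $(5,3)$-characters up to $3$-equivalence; if $\mathcal{R}_G=\emptyset$ then $G$ is excluded by Lemmas \ref{lem:d-equiv} and \ref{lem:rulingout}. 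Here the two cases differ. For $p=2$ every representation is automatically special, since its eigenvalues are $2$-power roots of unity and the eigenvalue patterns of Definition \ref{def:special} cannot occur; for $p=3$ the special filter is the heart of the reduction, because any non-special representation produces an element conjugate to $\Diag(\xi_3,\xi_3,1,1,1,1,1)$ or $\Diag(\xi_3,\xi_3,\xi_3,1,1,1,1)$, whence Corollary \ref{cor:partitionbyA} forces a $(2,5)$- or $(3,4)$-partition and Theorem \ref{thm:5+2} applies. Finally in \textbf{Step 3}, for each remaining character I realize a representation $\rho$, solve the linear system for the cubics $F$ fixed by $\rho(G)$, and test smoothness via the combinatorial criteria of Lemmas \ref{lem:nsmcubic} and \ref{lem:cri}.

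The main obstacle is \textbf{Step 3} for the groups that survive both the sub-test and the special-character filter. For these I can no longer invoke partitionability, so I must genuinely analyze the full linear space of invariant cubics and decide smoothness; the combinatorial non-smoothness tests are not always decisive, and a smooth invariant form may in fact exist. This is precisely how the more subtle examples $X_{14},\dots,X_{18}$ and their symmetry groups $G_{X_{14}},\dots,G_{X_{18}}$ arise, several of which carry a substantial $2$- or $3$-part, so the decisive task is, for each candidate, either to exhibit a smooth invariant cubic together with the resulting embedding of $G$ into the corresponding $G_{X_i}$, or to prove that no invariant cubic is smooth. This is where essentially all of the computational weight of the theorem concentrates, requiring the combination of GAP \cite{GAP}, Mathematica \cite{Wo}, and Sage \cite{Sage}.
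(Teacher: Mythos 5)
Your proposal matches the paper's proof in approach: the paper runs Strategy \ref{strategy} inductively on $|G|=p^a$ (for $p=2$, orders $2,4,8$ give $\mathcal{B}_m=\emptyset$, at order $16$ only $C_2\times Q_8$ survives \textbf{Step 1} and is killed at \textbf{Step 2}, with orders $32,64$ similar), and it terminates for large $a$ exactly as you suggest, via forced containment of $C_{64}$ (resp.\ $C_9\times C_3$ or $C_3^4$) and Theorem \ref{thm:control}. The only incidental inaccuracy is your closing remark: the groups $G_{X_{14}},\dots,G_{X_{18}}$ are not $p$-groups, so for this particular theorem \textbf{Step 3} only ever rules candidates out rather than producing new examples.
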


	\begin{proof}
		We give proof for 2-groups, and the classification for 3-groups is similar. Let $m=|G|=2^a$. 
		
		Cases $a=1,2,3$: Each group of order $2^a$ is a subgroup of one of the 20 groups $G_{X_i}$. In particular, $\mathcal{B}_{2}=\mathcal{B}_{4}=\mathcal{B}_{8}=\emptyset$ and we are done.
		
		Case $a=4$: After running \textbf{Step 1}, we have $\mathcal{B}_{16}=\{C_2\times Q_8\}.$  By \textbf{Step 2}, we find $\mathcal{R}_{C_2\times Q_8}=\emptyset$, which implies that $C_2\times Q_8$ is not a $(5,3)$-group.
		
		Cases $a=5,6$: Similar to case $a=4$.
		
		Cases $a>6$: Note that $C_{64}$ is the only $(5,3)$-group of order $64$. Since any finite group of order $2^a$ contains a subgroup of order $64$. Thus, by Theorem \ref{thm:control}, there is no $(5,3)$-group of order $2^a$. This completes the classification for $2$-groups.
	\end{proof}
	
	Next we give an upper bound for the orders of candidate $(5,3)$-groups which we are reduced to consider from now on.
	\begin{proposition}\label{prop:boundorder}
		Let $G$ be a $(5,3)$-group. If $|G|$ does not divide $90720=2^5\cdot 3^4\cdot 5\cdot 7$, then $G$ is isomorphic to a subgroup of $G_{X_i}$ for some $1\le i\le 20$. In particular, if $F=F(x_1,x_2,...,x_7)$ is a smooth cubic form having neither $(2,5)$ nor $(3,4)$-type partition, then $|\Aut(X_F)|\le 90720$.
	\end{proposition}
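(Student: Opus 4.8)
The plan is to prove that $|G|\nmid 90720$ forces $G$ to contain one of the $19$ abelian groups listed in Theorem~\ref{thm:control}, after which that theorem gives the conclusion at once. First I would pin down the shape of $|G|$. By Proposition~\ref{pp:porder} every primary-order element of a $(5,3)$-group has order in $\{2^a,3^b,5,7,11,43\}$; since $C_{27}$ is not a $(5,3)$-group (it is one of the groups excluded in Theorem~\ref{thm:abelian}) the $3$-power element orders are at most $9$, and by Proposition~\ref{prop:sylowp} the Sylow $p$-subgroup for $p\in\{5,7,11,43\}$ is cyclic of order $p$. Hence $|G|=2^a3^b5^c7^d11^e43^f$ with $c,d,e,f\le 1$, and $|G|\nmid 90720=2^5\cdot 3^4\cdot 5\cdot 7$ exactly when $a\ge 6$, or $b\ge 5$, or $e=1$, or $f=1$. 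In each of these four regimes I would exhibit one of $C_{11}$, $C_{43}$, $C_{64}$, $C_9\times C_3$, $C_3^4$ inside $G$, all of which appear in the list of Theorem~\ref{thm:control}.

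The three regimes $e=1$, $f=1$, $a\ge 6$ are quick. If $e=1$ (resp. $f=1$), Cauchy's theorem produces an element of order $11$ (resp. $43$), giving $C_{11}\le G$ (resp. $C_{43}\le G$). If $a\ge 6$, choose a subgroup $Q\le G$ of order $64$ inside a Sylow $2$-subgroup; $Q$ is a $2$-group and a $(5,3)$-group, so by Theorem~\ref{thm:sylow23} it embeds in some $G_{X_i}$, and as $64\mid |G_{X_i}|$ only for $G_{X_{19}}\cong C_{64}$ we conclude $Q\cong C_{64}$. Theorem~\ref{thm:control} then finishes each of these cases.

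The case $b\ge 5$ is the \emph{main obstacle}: I must find $C_3^4$ or $C_9\times C_3$ in a Sylow $3$-subgroup $P$ with $|P|\ge 3^5$ and exponent dividing $9$. Writing $A\cong C_3^{\,\alpha}$ for a maximal abelian normal subgroup, self-centralization $C_P(A)=A$ gives $P/A\hookrightarrow\Aut(A)$ and the classical bound $5\le\binom{\alpha+1}{2}$, so $\alpha\ge 3$. If $\alpha\ge 4$ then $A$ is a noncyclic abelian $3$-group of exponent $\le 9$ and order $\ge 81$, hence contains $C_3^4$ or $C_9\times C_3$; and if $\alpha=3$ with $A\cong C_9\times C_3$ we are likewise done. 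This isolates the delicate configuration $A\cong C_3^3$ self-centralizing with $3^5\le |P|\le 3^6$, where $P/A$ embeds into the Sylow $3$-subgroup of $\GL(3,3)$. A purely group-theoretic treatment (pairing a unipotent element with its fixed space on $A$) handles the subcases with an order-$9$ element having a two-dimensional fixed space, but the residual subcase, a regular unipotent action combined with elements of order $9$, is awkward. I would therefore finish by invoking Theorem~\ref{thm:sylow23}: $P$ embeds in some $G_{X_i}$, and comparing $3$-adic valuations forces $i\in\{1,2,4,7\}$ (the only examples with $3^5\mid|G_{X_i}|$). The Sylow $3$-subgroups of these four groups are built from $C_3^6$, $C_3^2\times(C_3^3\rtimes C_3)$, $C_3\times(C_3^3\rtimes C_3)$ and $(C_3^2\rtimes C_3)^2$, and a direct inspection of their $3$-subgroups of order $\ge 3^5$ exhibits $C_3^4$ in each; Theorem~\ref{thm:control} again applies.

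Finally I would deduce the ``in particular'' clause by contradiction. Assume $F$ has neither a $(2,5)$- nor a $(3,4)$-type partition but $|\Aut(X_F)|>90720$, so $|\Aut(X_F)|\nmid 90720$. By the statement just proved, $\Aut(X_F)$ contains one of the $19$ abelian groups of Theorem~\ref{thm:control}; running the dichotomy from the proof of that theorem for this subgroup acting on $F$, either $F$ acquires a $(2,5)$- or $(3,4)$-type partition, contrary to hypothesis, or $G_F$ consists of semi-permutation matrices. In the latter case the differential method (Theorem~\ref{thm:autex}) forces $\Aut(X_F)\le G_{X_i}$ with $i\in\{10,11,19,20\}$, so $|\Aut(X_F)|\le 378$, contradicting $|\Aut(X_F)|>90720$. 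Hence $|\Aut(X_F)|\le 90720$. Apart from the $b\ge 5$ analysis, every step is routine bookkeeping with Cauchy's theorem and the two cited theorems.
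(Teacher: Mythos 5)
Your proposal is correct and takes essentially the same route as the paper: both arguments reduce $|G|\nmid 90720$ to exhibiting one of $C_{64}$, $C_9\times C_3$, $C_3^4$, $C_{11}$, $C_{43}$ inside $G$ (via Propositions \ref{pp:porder}, \ref{prop:sylowp} and the Sylow classification of Theorem \ref{thm:sylow23}) and then conclude by Theorem \ref{thm:control}. Your maximal-abelian-normal-subgroup detour in the $3^5$ case is ultimately abandoned in favour of exactly the inspection the paper itself performs (note the paper states the safer conclusion that every $(5,3)$-group of order $3^5$ contains $C_9\times C_3$ \emph{or} $C_3^4$, not always $C_3^4$), and your explicit derivation of the ``in particular'' clause merely fills in a step the paper leaves implicit.
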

	
	\begin{proof}
		By Propositions \ref{pp:porder}, \ref{prop:sylowp} and Theorem \ref{thm:sylow23}, we have 
		$$|G|=2^{a_2}\cdot 3^{a_3}\cdot 5^{a_5}\cdot 7^{a_7}\cdot 11^{a_{11}}\cdot 43^{a_{43}},$$ 
		where $a_2\le 6$, $a_3\le 8$, $a_5\le 1$, $a_7\le 1$, $a_{11}\le 1$, $a_{43}\le 1$. 
		By Theorem \ref{thm:sylow23}, any $(5,3)$-group of order $243=3^5$ contains either $C_9\times C_3$ or $C_3^4$. Since $|G|$ does not divide $2^5\cdot 3^4\cdot 5\cdot 7$, it follows that $G$ contains one of the following groups: $C_{64}$, $C_9\times C_3$, $C_3^4$, $C_{11}$, $C_{43}$. Then by Theorem \ref{thm:control}, $G$ is isomorphic to a subgroup of $G_{X_i}$ for some $1\le i\le 20$.
	\end{proof}
	
	Now we are ready to classify all solvable $(5,3)$-groups.
	
	\begin{theorem}\label{thm:solvable}
		Let $G$ be a solvable $(5,3)$-group. Then there exists $i\in\{1,\dots,20\}$ such that $G$ is isomorphic to a subgroup of $G_{X_i}$. 
	\end{theorem}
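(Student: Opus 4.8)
The plan is to prove the theorem by strong induction on $|G|$ within the class of solvable $(5,3)$-groups, using Strategy \ref{strategy} as the main engine and Proposition \ref{prop:boundorder} to cap the orders we must examine. First I would invoke Proposition \ref{prop:boundorder}: if $|G|$ does not divide $90720 = 2^5\cdot 3^4\cdot 5\cdot 7$, then $G$ already embeds in some $G_{X_i}$ and there is nothing to prove. Hence I may assume $|G|\mid 90720$, which by Propositions \ref{pp:porder} and \ref{prop:sylowp} involves only the primes $2,3,5,7$ and leaves just the $120$ divisors of $90720$ to treat. The abelian case is then immediate from Theorem \ref{thm:abelian} together with Lemma \ref{lem:ab53}, and the prime-power orders $|G|=2^a$, $|G|=3^b$ are settled by Theorem \ref{thm:sylow23}; these serve as the base of the induction.

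For the inductive step I would take a non-abelian solvable $(5,3)$-group $G$ of order $m\mid 90720$ and assume every solvable $(5,3)$-group of order properly dividing $m$ is already known to embed in some $G_{X_i}$. The point where solvability is genuinely used is the following: every proper subgroup $H<G$ is again solvable, hence a solvable $(5,3)$-group of smaller order, so by the inductive hypothesis $H$ embeds in some $G_{X_i}$. This keeps the entire induction inside the solvable class and lets me verify condition (1) of \textbf{Step 1} of Strategy \ref{strategy} (the sub-test of Remark \ref{rem:subtest}) without ever needing the non-solvable groups, which are handled separately.

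With this in place I would run Strategy \ref{strategy} at each order $m$ in increasing fashion. \textbf{Step 1} assembles the finite set $\mathcal{B}_m$ of candidate groups surviving the sub-test (1), the exclusion (2) by the $G_{X_i}$, and the partitionability constraint (3): by Theorem \ref{thm:control} any $G$ containing one of the $19$ distinguished abelian subgroups is already placed in some $G_{X_i}$, so such $G$ never enter $\mathcal{B}_m$. For the majority of divisors the sub-test and Theorem \ref{thm:control} already force $\mathcal{B}_m=\emptyset$, finishing that order. For each surviving $G\in\mathcal{B}_m$, \textbf{Step 2} computes the set $\mathcal{R}_G$ of special almost $(5,3)$-characters up to $3$-equivalence; if $\mathcal{R}_G=\emptyset$ then $G$ is eliminated by Lemmas \ref{lem:d-equiv} and \ref{lem:rulingout}. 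Otherwise \textbf{Step 3} realizes each $\chi\in\mathcal{R}_G$ by an explicit representation $\rho$, computes the space of cubic forms invariant under $\rho(G)$, and uses the combinatorial non-smoothness tests Lemmas \ref{lem:nsmcubic} and \ref{lem:cri} to show that no such form is smooth, thereby ruling $G$ out.

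The main obstacle is organizational and computational rather than conceptual: one must traverse all relevant divisors of $90720$ and apply Strategy \ref{strategy} at each, relying heavily on computer algebra (GAP for the subgroup lattice and the sub-test, Mathematica and Sage for characters, representations, and invariant cubics). The features that make this tractable are already established—Proposition \ref{prop:boundorder} bounds the orders, Theorem \ref{thm:control} lets partitionability dispose of any group containing a distinguished abelian subgroup, and the restriction to \emph{special} almost $(5,3)$-characters (Definition \ref{def:special}) combined with $3$-equivalence sharply reduces the list of characters requiring explicit invariant-form computations in \textbf{Steps 2} and \textbf{3}. Completing the induction over all such $m$ yields that every solvable $(5,3)$-group embeds in some $G_{X_i}$, as claimed.
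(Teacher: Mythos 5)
Your proposal is correct and follows essentially the same route as the paper: reduce to orders dividing $90720$ via Proposition \ref{prop:boundorder}, then run Strategy \ref{strategy} inductively over the relevant divisors (with Theorems \ref{thm:abelian} and \ref{thm:sylow23} as base cases and Theorem \ref{thm:control} plus the sub-test doing most of the pruning), deferring the case-by-case computer verification to the ancillary files. Your explicit remark that solvability keeps the sub-test induction closed within the solvable class is exactly the point that lets the paper treat non-solvable groups separately afterwards, so nothing is missing.
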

	
	\begin{proof}
		By Proposition \ref{prop:boundorder}, we are reduced to consider cases $|G|=m$ dividing $90720$. Then as in the proof of Theorem \ref{thm:sylow23}, we use Strategy \ref{strategy} to proceed the classification. It turns out that all relevant candidate groups which are not $(5,3)$-groups can be ruled out by Strategy \ref{strategy} (the details are included in the ancillary file {\ttfamily Theorem5.22.txt} to \cite{YYZ23}). Since the arguments are completely similar as before, we only give an example for which \textbf{Step 3} is involved. Consider the case $|G|=m=96$ (the outputs of our computer-aided calculations for this case are contained in the ancillary file {\ttfamily Example-m96-53-groups.txt} to \cite{YYZ23}). By \textbf{Step 1}, $\mathcal{B}_{96}$ consists of $4$ groups. Applying \textbf{Step 2}, it turns out $C_3\rtimes C_{32}\in \mathcal{B}_{96}$ is the only one for which $\mathcal{R}_G$ is not empty. In fact, $\mathcal{R}_{C_3\rtimes C_{32}}$ has exactly one element, say $\chi$. The image $\rho(C_3\rtimes C_{32})$ of the special almost $(5,3)$-representation $\rho$ affording $\chi$ is generated by $A_1=\begin{pmatrix}S  & 0 \\0& T\end{pmatrix}$ and	$A_2=\Diag(1,1,1,1,1,\xi_3,\xi_3^2)$, where $S=\Diag(1,\xi_4^3,\xi_8^5,\xi_{16}^3,\xi_{32}^{13})$, $T=\begin{pmatrix}0&1\\1&0\end{pmatrix}$. Thus, $\{x_1^3$, $x_1x_6x_7$, $x_4x_5^2$, $x_6^3+x_7^3$, $x_3x_4^2$, $x_2x_3^2\}$ is a basis of the homogenous polynomials of degree $3$ preserved by $A_1$ and $A_2$. Then if $F$ is a cubic form invariant by $\rho(C_3\rtimes C_{32})$, we have $x_2^2x_j\notin F$ for all $1\le j\le 7$, which implies $F$ is not smooth. Therefore, $C_3\rtimes C_{32}$ is not a $(5,3)$-group.
	\end{proof}

	\subsection{Proof of Theorem \ref{thm:Main}}\label{thmproofmain} In this subsection, we prove our main theorem (Theorem \ref{thm:Main}). By Theorem \ref{thm:solvable} and Proposition \ref{prop:boundorder}, it suffices to prove that if $G$ is a non-solvable $(5,3)$-groups with $m:=|G|$ dividing $2^5\cdot 3^4 \cdot 5 \cdot 7$, then $G$ is isomorphic to a subgroup of $G_{X_i}$ for some $i\in \{1,2,...,20\}$. 
			
		Cases $m\leq 2000$: We use Strategy \ref{strategy} to handle these cases.
		 In fact, it turns out that by \textbf{Step 1}, $\mathcal{B}_m=\emptyset$ (resp. $\mathcal{B}_m=\{ C_2 \times A_6\}$) for $m\neq 720$ (resp. $m=720$). By \textbf{Step 2}, we find $\mathcal{R}_{C_2 \times A_6}=\emptyset$, which implies $C_2 \times A_6$ is not a $(5,3)$-group.
		 
		Cases $m>2000$: It is well-known that a non-abelian finite simple group of order dividing $2^5\cdot 3^4 \cdot 5 \cdot 7$ is one of the following: $A_5$, $\PSL(3,2)$, $A_6$, $\PSL(2,8)$, $A_7$, ${\rm PSU}(3,3)$. Then $m$ must be divided by the order of one of the $6$ simple groups. Thus, it suffices to consider non-solvable groups of the following orders:	$2016$, $2160$,  $2520$, $3024$, $3240$, $3360$, $3780$, $4320$, $4536$, $5040$, $6048$, $6480$, $7560$, $9072$, $10080$, $11340$, $12960$, $15120$, $18144$, $22680$, $30240$, $45360$, $90720$. As above, we also apply Strategy \ref{strategy} to treat these orders. For cases $m=2160$, $2520$, $3024$,  $3240$, $3360$, $3780$, $4536$, $5040$, $7560$, $15120$, we completely use computer to do computation in the steps of Strategy \ref{strategy} like in the cases $m\leq 2000$ (the details can be found in the ancillary file {\ttfamily Theorem5.1.txt} to \cite{YYZ23}). For the remaining orders, we use a more theoretical approach based on our classification of $(5,3)$-groups of smaller orders. Next we give the details for a typical case $m=2016$.
						
		Suppose $G$ is a non-solvable $(5,3)$-group of order $2016$. Let $N$ be a maximal proper normal subgroup of $G$. Then we consider the following short exact sequence:
		$$1\longrightarrow N \longrightarrow G \longrightarrow M \longrightarrow 1,$$
		where $M\cong G/N$ is a simple group. Then $M$ is one of the following groups: $C_2$, $C_3$, $C_7$, $\PSL(3,2)$, $\PSL(2,8)$. 
						
			(1) If $M\cong C_2$, then $|N|=1008$ and $N\cong C_3\times ($PSL$(3,2)\rtimes C_2)$ by our classification of $(5,3)$-groups of order $1008$. Note that $N$ contains a unique subgroup, say $H$, of order 336. Moreover, $H\cong  {\rm PSL}(3,2)\rtimes C_2$. Thus for any $g\in G$, $gHg^{-1}\subset gNg^{-1}=N$, which implies that $gHg^{-1}=H$ is a normal subgroup of $G$. We get another short exact sequence:
			$$1\longrightarrow H \longrightarrow G \longrightarrow M' \longrightarrow 1,$$
			where $|M'|=6$. So there exists a subgroup $H'< M^\prime$ with $|H^\prime|=2$. Then $G$ has a subgroup of order $336\cdot 2=672$, which is impossible since there is no $(5,3)$-group of order $672$.
			
			(2) If $M\cong C_3$, then $|N|=672$, which is impossible.
			
			(3) If $M\cong C_7$, then both $N$ and $M$ are solvable, which contradicts non-solvability of $G$.
			
			(4) If $M\cong \PSL(3,2)$, then $|N|=12$. Since $|M|=2^3\cdot 3\cdot 7$, there exists a subgroup $H<G$ such that $|H|=84$, which is impossible by previous classification.
			
			(5) If $M\cong \PSL(2,8)$, then $|N|=4$. Since $|M|=2^3\cdot 3^2\cdot 7$, there exists a subgroup $H<G$ such that $|H|=28$, similarly, it is impossible.
			
			Therefore, we conclude that there is no non-solvable $(5,3)$-group of order $2016$. The remaining cases for $m$ can be handled similarly. This completes the proof of Theorem \ref{thm:Main}.

	\section{Automorphism groups of cubic fourfolds}\label{ss:fourfolds}
	In this section, we classify all groups faithfully acting on smooth cubic fourfolds based on the classification of $(5,3)$-groups. It turns out that there are $15$ maximal groups among them (Theorem \ref{thm:fourfold}). As a by-product, we find explicit defining polynomials of two cubic fourfolds with maximal symplectic automorphism groups isomorphic to $A_7$ and $M_{10}$ respectively (Theorems \ref{thm:A7} and \ref{thm:M10}).

	\subsection{Examples}\label{ex:4folds}
	The $15$ smooth cubic fivefolds $X_j$ ($j=1,2,...,7, 10,11,...,15, 17,18$) in Subsection \ref{mainex} are defined by smooth cubic forms admitting partitions of $(6,1)$-type and we have seen the explicit description of their automorphism groups ${\rm Aut}(X_j)=G_{X_j}$ (see Remark \ref{rem:Aut=G}). From this and Lemma \ref{lem:exactseq}, we immediately obtain the following $15$ examples of smooth cubic fourfolds $X_{i}^{\prime}=X_{F_{i}^{\prime}}$ ($i=1,2,...,15$ respectively) via the relation $\widehat{F_{i}^{\prime}}=F_j$ (up to obvious permutations of variables) and the explicit description of their automorphism groups ${\rm Aut}(X_{F_{i}^{\prime}})$ (in fact, if $A\in G_{F_{i}^{\prime}}$, then $\begin{pmatrix}A&0\\0&1\end{pmatrix}\in G_{\widehat{F_{i}^{\prime}}}$; the matrix generators of ${\rm Aut}(X_{F_{i}^{\prime}})$ can be found in the ancillary file {\ttfamily Examples6.1.txt} to \cite{YYZ23}).

	\begin{enumerate} 
		\item[(1)] Let $F_{1}^{\prime}=x_1^3+x_2^3+x_3^3+x_4^3+x_5^3+x_6^3$ and $X_1'=X_{F_{1}^{\prime}}$ the Fermat cubic fourfold.
		Then $\Aut(X_1') \cong C_3^5 \rtimes S_6$ of order $2^4 \cdot 3^7 \cdot 5=174960$.
		
		\item[(2)] Let $F_{2}^{\prime}=x_1^3+x_2^3+x_3^3+3(\sqrt{3}-1)x_1 x_2 x_3+x_4^3+x_5^3+x_6^3$ and $X_2'=X_{F_{2}^{\prime}}$.
		Then $\Aut(X_2') \cong ((C_3 \times (C_3^3\rtimes C_3))\rtimes C_3) \rtimes (C_4 \times C_2) $ of order $2^3\cdot 3^6=5832$. 
		
		\item[(3)] Let $F_{3}^{\prime}=x_1^2x_2+x_2^2x_3+x_3^2x_4+x_4^3+x_5^3+x_6^3$ and $X_3'=X_{F_{3}^{\prime}}$. 
		Then $\Aut(X'_{3}) \cong C_8 \times (C_3^2 \rtimes C_2)$ of order $2^4 \cdot 3^2=144$.
		
		\item[(4)] Let $X_4'\subset \P^6$ defined by $x_1^3+x_2^3+x_3^3+x_4^3+x_5^3+x_6^3+x_7^3=x_1+x_2+x_3+x_4+x_5=0$.
		Then $\Aut(X'_{4}) \cong S_5 \times (C_3^2 \rtimes C_2)$ of order $2^4\cdot 3^3 \cdot 5=2160$. 
		
		\item[(5)] Let $F_{5}^{\prime}=x_1^2x_2+x_2^2x_3+x_3^2x_4+x_4^2x_5+x_5^3+x_6^3$ and $X_5'=X_{F_{5}^{\prime}}$.  
		Then $\Aut(X'_{5}) \cong C_{48}$ of order $3 \cdot 2^4 =48$.
		
		\item[(6)]	Let $F_{6}^{\prime}=x_1^2x_2+x_2^2x_3+x_3^2x_4+x_4^2x_5+x_5^2x_1+x_6^3$ and $X_6'=X_{F_{6}^{\prime}}$. Then $\Aut({X'_6}) \cong {\rm PSL}(2,11) \times C_3$ of order $2^2 \cdot 3^2 \cdot 5 \cdot 11=1980$. 
		
		\item[(7)] Let $F_{7}^{\prime}=x_1^3+x_2^3+x_3^3+3(\sqrt{3}-1)x_1 x_2 x_3+x_4^3+x_5^3+x_6^3+3(\sqrt{3}-1)x_4 x_5 x_6$ and $X_7'=X_{F_{7}^{\prime}}$. 
		Then $\Aut({X'_{7}}) \cong ((C_3 \times(C_3^2\rtimes C_3))\rtimes C_3)\rtimes (C_4^2 \rtimes C_2)$ of order $2^5\cdot 3^5=7776$.
		
		\item[(8)] Let $F_{8}^{\prime}=x_1^2x_2+x_2^2x_3+x_3^2x_4+x_4^2x_5+x_5^2x_6+x_6^3$ and $X_8'=X_{F_{8}^{\prime}}$.
		Then $\Aut({X'_{8}}) \cong C_{32}$ of order $2^5= 32$.
		
		\item[(9)] Let $F_{9}^{\prime}=x_1^2x_2+x_2^2x_3+x_3^2x_4+x_4^2x_5+x_5^2x_6+x_6^2x_1$ and $X_9'=X_{F_{9}^{\prime}}$.
		Then $\Aut({X'_{9}}) \cong C_{21} \rtimes C_6$ of order $2 \cdot 3^2 \cdot 7=126$.
		
		\item[(10)] Let $F_{10}^{\prime}=(x_1^3+x_2^3+x_3^3+x_4^3+x_5^3+x_6^3)+\frac{1}{5}(-3\xi_{24}^7-3\xi_{24}^5+3\xi_{6}-3\xi_{8}+6\xi_{24}-3)\cdot(x_1x_2x_3+x_1x_2x_4+(\xi_{6}-1)x_1x_2x_5+x_1x_2x_6+(\xi_{6}-1)x_1x_3x_4+x_1x_3x_5+x_1x_3x_6+(\xi_{6}-1)x_1x_4x_5-\xi_{6}x_1x_4x_6-\xi_{6}x_1x_5x_6+(\xi_{6}-1)x_2x_3x_4+(\xi_{6}-1)x_2x_3x_5-\xi_{6}x_2x_3x_6+x_2x_4x_5+x_2x_4x_6-\xi_{6}x_2x_5x_6+x_3x_4x_5-\xi_{6}x_3x_4x_6+x_3x_5x_6+x_4x_5x_6)$ and $X_{10}'=X_{F_{10}^{\prime}}$.
		Then $\Aut({X'_{10}}) \cong M_{10}$ of order $2^4\cdot 3^2 \cdot 5=720$.
		
		\item[(11)] Let $X'_{11}\subset \P^{6}$ defined by $x_1^3+x_2^3+x_3^3+x_4^3+x_5^3+x_6^3+x_7^3=x_1+x_2+x_3+x_4+x_5+x_6+x_7=0$.
		Then $\Aut({X'_{11}}) \cong S_7$ of order $2^4\cdot 3^2 \cdot 5\cdot7=5040 $. 
		
		\item[(12)] Let $F_{12}^{\prime}=x_1^2x_2+x_2^2x_5+x_3^2x_4+x_4^2x_5+x_5^2x_6+x_2x_4x_6+x_6^3$ and $X_{12}'=X_{F_{12}^{\prime}}$.
		Then $\Aut({X'_{12}}) \cong(C_8 \times C_2)\rtimes C_2$ of order $2^5=32$.
		
		\item[(13)] Let $F_{13}^{\prime}=8x_1^3+8(-5+4\sqrt{2})x_1x_2^2+2\xi_4(-11+6\sqrt{2})x_2(x_3^2+x_4^2)-4\xi_4x_1((-5+4\sqrt{2})x_3x_4+2(-3+\sqrt{2})x_5x_6)+(1+\xi_4)(-12+11\sqrt{2})(x_4x_5^2-x_3x_6^2)$ and $X_{13}'=X_{F_{13}^{\prime}}$. Then $\Aut({X'_{13}}) \cong {\rm PSL}(3,2) \rtimes C_2$ of order $2^4\cdot 3\cdot 7= 336$. 
		\item[(14)] Let $F'_{14}=x_1^3+x_1x_4^2-\frac{2}{3}x_1 x_4 x_6 + x_1 x_6^2 - \frac{2\xi_6}{3} x_1 x_4 x_5  - 
		\frac{2\xi_6}{3} x_1 x_5 x_6  + (-1 + \xi_6)x_1 x_5^2 +
		x_2^2 x_4 - x_2 x_3 x_4 + x_3^2 x_4 + x_3^2 x_6 + 3\xi_6 x_2 x_3 x_5  + 
		(-1 + \xi_{24} - \xi_8 -  \xi_{24}^5)x_2^2 x_6  + 
		(-1 - 2 \xi_{24} + 2 \xi_8 + 2  \xi_{24}^5) x_2 x_3 x_6  + 
		( \xi_{24} - \xi_6 -  \xi_{24}^7)x_3^2 x_5  + 
		(- \xi_{24} - \xi_6 + \xi_{24}^7)x_2^2 x_5 + x_4^3 - x_5^3 - x_4^2 x_6 - 
		x_4 x_6^2 + x_6^3 - \xi_6 x_4^2 x_5+ 2 \xi_6 x_4 x_5 x_6 - 
		\xi_6 x_5 x_6^2 +  (1 - \xi_6) x_4 x_5^2 + (1 - \xi_6)x_5^2 x_6$ and $X_{14}'=X_{F_{14}^{\prime}}$. 
		Then $\Aut({X'_{14}}) \cong \GL(2,3)$ of order $2^4\cdot 3 =48$.
		
		\item[(15)] Let $F'_{15}=  x_1^3 + (\frac{3}{2}\xi_4 - \xi_6 + \frac{1}{2})x_1^2
		x_2 + (-\frac{1}{2}\xi_4 + \frac{1}{2}\xi_6 + \frac{1}{2}\xi_{12} - 1)x_1
		x_2^2 + (-\frac{1}{2}\xi_6 - \frac{1}{2}\xi_{12} + \frac{1}{2})x_2^3 + (\xi_4 - 2\xi_6 - \xi_{12} + 2)x_1^2
		x_3 + (2\xi_{12} - 1)x_1x_2x_3 + (\frac{1}{2}\xi_4 + \frac{1}{2}\xi_6 - \frac{1}{2}\xi_{12})x_2^2
		x_3 + (\xi_4 - 2\xi_6 + 1)x_1x_3^2 + (-\frac{3}{2}\xi_4 + \xi_6 + \xi_{12} - \frac{1}{2})x_2
		x_3^2 + (-\frac{1}{2}\xi_6 + \frac{1}{2}\xi_{12} - \frac{1}{2})x_3^3 + (\xi_4 + \xi_6 - 1)x_1^2
		x_4 + (-\xi_4 - \xi_6 + \xi_{12} - 1)x_1x_2x_4 + (-\frac{1}{2}\xi_4 - \frac{1}{2})x_2^2
		x_4 + (2\xi_{12})x_1x_3x_4 + (\xi_6 - \xi_{12} - 1)x_2x_3
		x_4 + (-\frac{3}{2}\xi_4 + \frac{1}{2}\xi_6 + \frac{3}{2}\xi_{12} - 1)x_3^2x_4 + (-\xi_6 - \xi_{12})x_1
		x_4^2 + (-\frac{1}{2}\xi_4 + \frac{1}{2}\xi_6 + \frac{1}{2}\xi_{12})x_2
		x_4^2 + (\frac{1}{2}\xi_4 + \xi_6 - \xi_{12} - \frac{1}{2})x_3x_4^2 + (-\frac{1}{2}\xi_6 + \frac{1}{2}\xi_{12} + \frac{1}{2})
		x_4^3 + (\xi_{12} - 2)x_1^2x_5 + (-\xi_4 + 2\xi_6 - 1)x_1x_2
		x_5 + (-\frac{1}{2}\xi_6 - \frac{1}{2}\xi_{12} + \frac{1}{2})x_2^2x_5 + (-2\xi_4 + 2\xi_6 + 2\xi_{12} - 2)
		x_1x_3x_5 + (\frac{1}{2}\xi_6 - \frac{1}{2}\xi_{12} + \frac{1}{2})x_3^2x_5 + (-2\xi_4)x_1x_4
		x_5 + (\xi_6 - \xi_{12})x_2x_4x_5 + (\xi_4 - \xi_6 - \xi_{12})x_3x_4
		x_5 + (-\frac{1}{2}\xi_4 + \xi_6 + \xi_{12} - \frac{1}{2})x_4^2x_5 + (\xi_6 - \xi_{12} + 1)x_1
		x_5^2 + (\xi_4 - \frac{3}{2}\xi_6 - \frac{1}{2}\xi_{12} + \frac{1}{2})x_2
		x_5^2 + (\frac{1}{2}\xi_6 - \frac{1}{2}\xi_{12} + \frac{1}{2})x_3
		x_5^2 + (\frac{3}{2}\xi_4 - \frac{1}{2}\xi_6 - \frac{3}{2}\xi_{12} + 1)x_4
		x_5^2 + (-\frac{1}{2}\xi_6 + \frac{1}{2}\xi_{12} - \frac{1}{2})x_5^3 + (\frac{1}{2}\xi_4 - \frac{3}{2}\xi_6 + \frac{1}{2}\xi_{12})
		x_1^2x_6 + (-2\xi_4 + \xi_6 + \xi_{12} - 1)x_1x_2
		x_6 + (\frac{1}{2}\xi_4 - 2\xi_6 - \xi_{12} + \frac{5}{2})x_2^2x_6 + (\xi_6 + \xi_{12} - 2)x_1x_3
		x_6 + (\xi_6 - \xi_{12} - 1)x_2x_3x_6 + (-\frac{1}{2}\xi_4 + \frac{1}{2}\xi_6 + \frac{1}{2}\xi_{12})x_3^2
		x_6 + (-2\xi_4 + 2\xi_{12})x_1x_4x_6 + (-\xi_4 + \xi_6 - \xi_{12})x_2x_4
		x_6 + (-\xi_4 - 1)x_3x_4x_6 + (\frac{3}{2}\xi_6 - \frac{1}{2}\xi_{12} - \frac{1}{2})x_4^2
		x_6 + (2\xi_6 - \xi_{12})x_1x_5x_6 + (\xi_4 - 2\xi_6 + 1)x_2x_5x_6 + (-\xi_{12} + 1)
		x_3x_5x_6 + (2\xi_4 - \xi_6 - 2\xi_{12} + 1)x_4x_5
		x_6 + (-\frac{1}{2}\xi_4 - \xi_6 + \xi_{12} - \frac{1}{2})x_5^2x_6 + (-\frac{1}{2}\xi_4 + \xi_6 - \frac{1}{2})x_1
		x_6^2 + (\frac{1}{2}\xi_4 - \frac{5}{2}\xi_6 + \frac{1}{2}\xi_{12} + 2)x_2
		x_6^2 + (\frac{1}{2}\xi_4 - \xi_{12} + \frac{1}{2})x_3x_6^2 + (-\frac{1}{2}\xi_6 - \frac{1}{2}\xi_{12} + \frac{1}{2})x_4
		x_6^2 + (-\frac{1}{2}\xi_4 - \frac{1}{2}\xi_6 + \frac{1}{2}\xi_{12})x_5
		x_6^2 + (-\frac{1}{2}\xi_6 + \frac{1}{2}\xi_{12} + \frac{1}{2})x_6^3$ and $X_{15}'=X_{F_{15}^{\prime}}$. 
		Then $\Aut({X'_{15}}) \cong ((C_3 \times C_3)\rtimes Q_8)\rtimes C_3$  of order $2^3\cdot 3^3 =216$.
	\end{enumerate}

	The defining equations of the examples $X_i'$ ($i=1,2,...,11$) are known (see e.g., \cite{HM19}, \cite{LZ22}, \cite{Zh22}). It seems that the cubic fourfolds $X_i'$ ($i=12,13,14,15$) are new.

	\subsection{$C_3$-covering groups and classification}
	
	\begin{theorem}\label{thm:fourfold}
		Let $G$ be a finite group. Then the following two conditions are equivalent:
		
		\begin{enumerate}
  \item [(i)] $G$ is isomorphic to a subgroup of one of the $15$ groups ${\rm Aut}(X_i')$ $($$i=1,2,...,15$$)$; and
  \item [(ii)] $G$ acts on a smooth cubic fourfold faithfully.
\end{enumerate}
	\end{theorem}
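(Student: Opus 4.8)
The implication (i) $\Rightarrow$ (ii) is immediate: each of the fifteen groups occurs as the full automorphism group $\Aut(X_i')$ of an explicit smooth cubic fourfold $X_i'$ listed in Subsection \ref{ex:4folds}, whose structure was already determined from the fivefold computations together with Lemma \ref{lem:exactseq}; any subgroup of $\Aut(X_i')$ then acts faithfully on $X_i'$ and is a $(4,3)$-group. The whole difficulty lies in the converse (ii) $\Rightarrow$ (i), and the plan is to reduce it to the classification of $(5,3)$-groups already proved in Theorem \ref{thm:Main}.

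The key reduction is the passage from a cubic fourfold to a cubic fivefold. Given a smooth cubic fourfold $X_{F'}$ with $F' = F'(x_1,\dots,x_6)$, the form $\widehat{F'} = F' + x_7^3$ defines a smooth cubic fivefold, and the assignment $A \mapsto \Diag(A,1)$ embeds $G_{F'}$ into $G_{\widehat{F'}}$ while remaining injective after projectivization; hence $G_{F'}$ is itself a $(5,3)$-group and, by Lemma \ref{lem:exactseq}, is a $C_3$-covering of $G = \Aut(X_{F'})$. This is the content of Lemma \ref{lem:(4,3)and(5,3)}: a $(4,3)$-group $G$ always admits a $C_3$-covering $(5,3)$-group $\widetilde{G}$, which by Theorem \ref{thm:Main} embeds into one of the twenty groups $G_{X_i}$. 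Thus $G \cong \widetilde{G}/C_3$ is pinned down to a short list of quotients.

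With $\widetilde{G} \hookrightarrow G_{X_i}$ in hand, I would organize the argument by the partition type of the defining form, transferring partitionability between $F'$ and $\widehat{F'}$ by Lemma \ref{lem:widehatF}. For every fivefold $X_j$ whose form carries a $(6,1)$-type partition — precisely $j \in \{1,\dots,7,10,\dots,15,17,18\}$ — the relation $\widehat{F_i'} = F_j$ exhibits the corresponding fourfold $X_i'$ and shows that the fifteen groups $\Aut(X_i')$ account for all $(4,3)$-groups coming from such forms. The remaining candidate coverings must then be eliminated or reduced to the list; this is carried out by Theorem \ref{thm:ruleout43} and Strategy \ref{strategy43}, which proceed as in Strategy \ref{strategy} by restricting almost $(4,3)$-characters to abelian subgroups and applying the combinatorial non-smoothness tests (Lemmas \ref{lem:nsmcubic} and \ref{lem:cri}).

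The hard part will be the failure of $F$-liftability. Because $\gcd(6,3) = 3 \ne 1$, Theorem \ref{thm:F-liftable} shows that $\Aut(X_{F'})$ need not lift to a matrix subgroup of $G_{F'} \subset \GL(6,\C)$, so unlike the fivefold case the $(4,3)$-groups cannot simply be read off as matrix groups preserving a form; this is exactly why the $C_3$-covering reformulation above is needed. The obstruction is sharpest for the two non-split coverings $C_3.A_7 = G_{X_{16}}$ and $C_3.M_{10} = G_{X_{12}}$, whose quotients $A_7$ (sitting inside $S_7 = \Aut(X_{11}')$) and $M_{10} = \Aut(X_{10}')$ are genuine $(4,3)$-groups that do not lift. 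To pin these down — and in particular to certify $M_{10}$ as one of the fifteen maximal groups — I would produce explicit smooth cubic fourfold forms $F_{A_7}$ and $F_{M_{10}}$ invariant under the respective groups (Theorems \ref{thm:A7} and \ref{thm:M10}), which as a by-product yields previously unknown equations for the two cubic fourfolds carrying maximal symplectic symmetry.
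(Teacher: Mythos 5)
Your proposal is correct and follows essentially the same route as the paper: bound $(4,3)$-groups via their $C_3$-covering $(5,3)$-groups (Lemma \ref{lem:(4,3)and(5,3)} plus Theorem \ref{thm:Main}), transfer partitionability between $F'$ and $\widehat{F'}$ (Lemma \ref{lem:widehatF}, Theorem \ref{thm:ruleout43}), and eliminate the remaining candidates order by order with Strategy \ref{strategy43}. The only small imprecision is that \textbf{Step 3} of Strategy \ref{strategy43} works with special almost $(5,3)$-representations of the coverings $\widehat{G}$ containing $\Diag(\xi_3,\dots,\xi_3,1)$ rather than with ``almost $(4,3)$-characters,'' and Theorems \ref{thm:A7} and \ref{thm:M10} are by-products rather than ingredients of the classification (since $M_{10}=\Aut(X_{10}')$ and $A_7<S_7=\Aut(X_{11}')$ are already covered by the listed examples).
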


		 A list of all subgroups of the $15$ groups $\Aut(X'_i)$ is contained in the ancillary file {\ttfamily 43-groups.txt} to \cite{YYZ23}. We will prove Theorem \ref{thm:fourfold} based on close relations between $(4,3)$-groups and $(5,3)$-groups. First we make some reduction based on partitionability as in our classification of $(5,3)$-groups. The proof of the following result is similar to that of Theorem \ref{thm:5+2} and we omit the details.
	
	\begin{theorem}\label{thm:4parti}
		Let $X$ be a smooth cubic fourfold defined by the homogeneous polynomial $F$. If $F$ is partitionable, then there exists $i\in\{1,\dots 7\}$ such that $\Aut (X)$ is isomorphic to a subgroup of $\Aut(X'_i)$.
	\end{theorem}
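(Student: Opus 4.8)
The plan is to imitate the case analysis in the proof of Theorem \ref{thm:5+2}, now for a smooth cubic form $F=F(x_1,\dots,x_6)$ defining $X=X_F\subset\P^5$; the one genuinely new feature is that $\gcd(3,6)=3$, so by Theorem \ref{thm:F-liftable} the group $\Aut(X)$ need not be $F$-liftable. First I would normalize the partition: since $X$ is smooth every variable must occur in $F$, so a maximal partition $A(F)=H_1+\cdots+H_t$ uses all six variables, and writing $a_i$ for the number of variables of the unpartitionable block $H_i$ we have $\sum_i a_i=6$ and $t\ge2$. By Lemma \ref{lem:2n3n} there is no unpartitionable smooth cubic in exactly two variables, so each $a_i\in\{1,3,4,5\}$ and the possible maximal types are
$$(1,1,1,1,1,1),\qquad (1,1,1,3),\qquad (3,3),\qquad (1,1,4),\qquad (1,5),$$
where a block of size $1$ is a cube, a block of size $3$ is a form $x^3+y^3+z^3+\lambda xyz$ with $\lambda\ne0$, and blocks of size $4,5$ are unpartitionable cubic surfaces, resp.\ threefolds.

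The favorable types are those containing a block of size $k\ge4$, namely $(1,5)$ and $(1,1,4)$, because there $F$-liftability is available. In these cases I would write $F=H+K$ with $H$ the size-$k$ unpartitionable block and $K$ the sum of the remaining cubes ($K=x_6^3$ for $(1,5)$, $K=x_5^3+x_6^3$ for $(1,1,4)$). By Remark \ref{rem:m345} we have $S_1^H=\emptyset$, so Proposition \ref{prop:unp+fem} forces $G_F$ into block-diagonal form. As $\gcd(3,k)=1$ for $k\in\{4,5\}$, Theorem \ref{thm:F-liftable} shows $\Aut(X_H)$ is $H$-liftable, whence Proposition \ref{thm:relation} yields $\Aut(X)\cong\Aut(X_H)\times G_K$ with $G_{x_6^3}\cong C_3$ and $G_{x_5^3+x_6^3}\cong C_3^2\rtimes C_2$. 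Here $\Aut(X_H)$ is the automorphism group of an \emph{unpartitionable} smooth cubic threefold (resp.\ surface), and I would invoke the classification of $(3,3)$-groups \cite{WY20} (resp.\ of $(2,3)$-groups \cite{Se42,Ho97,Do12}) to match $\Aut(X_H)$ against the groups realized by the blocks of $X_5',X_6'$ (resp.\ $X_3',X_4'$); comparing with the explicit $\Aut(X_i')$ computed in Subsection \ref{ex:4folds} then places $\Aut(X)$ inside $\Aut(X_i')$ for a suitable $i\in\{3,4,5,6\}$.

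The remaining three types have all blocks of size $\le3$, where $F$-liftability genuinely fails and Proposition \ref{thm:relation} is unavailable; these I would handle directly. The Fermat case $(1,1,1,1,1,1)$ is classical: $\Aut(X)\cong C_3^5\rtimes S_6=\Aut(X_1')$. For $(1,1,1,3)$, writing $F=(x_1^3+x_2^3+x_3^3)+K(x_4,x_5,x_6)$, I would compute the first characteristic set and find $V_1^F=\la x_1,x_2,x_3\ra$ with $S_1^F$ a set of three points; by Lemma \ref{lem:inv} both are $G_F$-invariant, and a differential argument as in Proposition \ref{prop:1+3+3} makes $G_F$ block diagonal, the two blocks lying in $G_{x_1^3+x_2^3+x_3^3}\cong C_3^3\rtimes S_3$ and in $G_K$ (one of the groups of Lemma \ref{lem:3vari}). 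For $(3,3)$ one argues exactly as in Proposition \ref{prop:1+3+3} that $G_F$ is block diagonal or block anti-diagonal. In both cases $G_F$ is explicit, and $\Aut(X)=\pi(G_F)=G_F/\la\xi_3 I_6\ra$ by Lemma \ref{lem:exactseq}; matching this quotient with $\Aut(X_2')=G_{F_2'}/\la\xi_3 I_6\ra$, resp.\ $\Aut(X_7')=G_{F_7'}/\la\xi_3 I_6\ra$, identifies $\Aut(X)$ with a subgroup of $\Aut(X_2')$, resp.\ $\Aut(X_7')$.

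I expect the main obstacle to be exactly these non-liftable types: since the central $C_3=\la\xi_3 I_6\ra$ does not split off, one cannot read $\Aut(X)$ as a direct product, and must analyze the quotient $G_F/\la\xi_3 I_6\ra$ with some care in order to recognize it inside the twisted extensions that define $\Aut(X_2')$ and $\Aut(X_7')$. A secondary bookkeeping point arises in the favorable cases, where one must confirm, via the classifications of cubic-surface and cubic-threefold automorphisms, that no unpartitionable block produces a group lying outside those realized by $X_3',\dots,X_6'$.
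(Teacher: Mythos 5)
Your proposal is correct and takes essentially the same route as the paper, which omits the proof of this theorem entirely with the remark that it is similar to that of Theorem \ref{thm:5+2}: your enumeration of the maximal partition types $(1,5)$, $(1,1,4)$, $(1,1,1,3)$, $(3,3)$, $(1,1,1,1,1,1)$ together with Propositions \ref{prop:unp+fem}, \ref{prop:1+3+3} and \ref{thm:relation} is exactly that argument carried out in six variables. Your explicit treatment of the types with all blocks of size at most $3$ --- where $\gcd(3,6)=3$ makes $F$-liftability unavailable and one must instead control $G_F$ directly and pass to the quotient by $\langle \xi_3 I_6\rangle$ --- correctly supplies the one point at which the fourfold case is not literally identical to the fivefold one.
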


To study relations between $(4,3)$-groups and $(5,3)$-groups in a more general setting, we introduce the following definition.
	
	\begin{definition}\label{def:Cdcovering}
	Let $G$ and $\widehat{G}$ be two finite groups. Let $d$ be a positive integer. We say $\widehat{G}$ is a {\it $C_d$-covering group} of $G$ if  the centre $Z(\widehat{G})$ of $\widehat{G}$ contains a subgroup $N$ such that $N\cong C_d$ and $\widehat{G}/N\cong G$.
	\end{definition}

	Now we have the following
	
	\begin{lemma}\label{lem:widehatF}
	Let $F=F(x_1,...,x_m)$ be a smooth form of degree $d$ with $m\ge 3$, $d\ge 3$. We define $\widehat{F}:=F+x_{m+1}^d$. Then
	
	\begin{enumerate}
  \item [(1)] $\widehat{F}$ is a smooth form of degree $d$.
  \item [(2)] If $G_{\widehat{F}}$ consists of semi-permutation matrices, then so does $G_F$.
  \item [(3)] If $d=3$, then $F$ is partitionable if and only if $\widehat{F}$ has an $(a_1,a_2)$-type partition with $a_1\ge 2$, $a_2\ge 2$. 
\end{enumerate}
	\end{lemma}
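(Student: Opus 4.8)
The plan is to handle the three parts separately, the reverse implication of (3) being where the real content lies. For (1) I would apply the Jacobian criterion: since $\partial \widehat{F}/\partial x_i=\partial F/\partial x_i$ for $i\le m$ and $\partial \widehat{F}/\partial x_{m+1}=d\,x_{m+1}^{d-1}$, any common zero of all partials forces $x_{m+1}=0$ and then $\partial F/\partial x_i=0$ for all $i\le m$; smoothness of $F$ gives $(x_1,\dots,x_m)=0$, so $X_{\widehat F}$ is smooth. For (2) the point is that $A\mapsto \widehat A:=\begin{pmatrix}A&0\\0&1\end{pmatrix}$ maps $G_F$ into $G_{\widehat F}$, because $\widehat A(\widehat F)=A(F)+x_{m+1}^{d}=\widehat F$. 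A matrix in $\GL$ is semi-permutation exactly when it attains the minimal number of nonzero entries, and $\widehat A$ has precisely one more nonzero entry than $A$; hence if every element of $G_{\widehat F}$ is semi-permutation, every $\widehat A$ has $m+1$ nonzero entries, so every $A\in G_F$ has $m$, i.e. $G_F$ consists of semi-permutation matrices.

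For the forward implication of (3), assume $F$ is partitionable, so after a coordinate change $A(F)=H_1+\cdots+H_t$ with $t\ge 2$; smoothness forces each variable to occur, hence $\sum a_i=m$. Extending $A$ by fixing $x_{m+1}$ turns $\widehat F$ into $H_1+\cdots+H_t+x_{m+1}^3$, a sum of $t+1\ge 3$ blocks of total size $m+1\ge 4$. I then regroup these blocks into two groups, each of total size $\ge 2$: if some $H_i$ has $a_i\ge 2$, take it as one group and all remaining blocks (at least two of them) as the other; if every $a_i=1$ there are $m+1\ge 4$ cube-blocks and I split off any two. Either way $\widehat F$ acquires an $(a_1,a_2)$-partition with $a_1,a_2\ge 2$.

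For the reverse implication of (3) I argue the contrapositive. If $F$ is unpartitionable it has no $(1,m-1)$-partition, so $S_1^F=\emptyset$ by Lemma \ref{lem:linear}. A block-diagonal rank computation then shows that $S_1^{\widehat F}$ is the single projective point $[e_{m+1}]$: the only rank-one directional second derivative is $\partial \widehat F/\partial x_{m+1}=3x_{m+1}^2$, and any direction with nonzero $x_{m+1}$-component and nonzero projection $\ell'$ to the first $m$ coordinates has rank $\ge 2$, using that $\partial_{\ell'}F=0$ forces $\ell'=0$ for smooth $F$. Now suppose for contradiction $\widehat F\sim P+Q$ is an $(a_1,a_2)$-partition with $a_1,a_2\ge 2$. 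Parts of a partition of a smooth form are smooth, so the same rank computation gives $S_1^{P+Q}=S_1^P\sqcup S_1^Q$; since $S_1$ is carried bijectively by linear equivalence (Lemma \ref{lem:inv}) and $S_1^{\widehat F}$ is a single point, that point lies in one block, say $P$, with $S_1^Q=\emptyset$ and $S_1^P$ a single point. Lemma \ref{lem:linear} applied to $P$ splits off a cube, $P\sim u_1^3+P'$ with $S_1^{P'}=\emptyset$, whence $\widehat F\sim (P'+Q)+u_1^3$. Setting $G:=P'+Q$ (smooth, $S_1^G=\emptyset$, and partitionable since $P'$ and $Q$ are both nonzero), I reduce to cancelling a cube summand.

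The crux, and the step I expect to be the main obstacle, is a cancellation statement: if $F+x_{m+1}^3\sim G+u_1^3$ with $F,G$ smooth and $S_1^F=S_1^G=\emptyset$, then $F\sim G$. I would prove it by recovering $F$ intrinsically from $\Phi:=F+x_{m+1}^3$. The unique point $[e]=S_1^{\Phi}$ is canonical; the rank-one quadratic $\partial_e\Phi$ has a canonical radical hyperplane $K_\Phi\subset \C^{m+1}$ (here $K_\Phi=\{x_{m+1}=0\}$); and the restriction $\Phi|_{K_\Phi}$ equals $F$. All of this data is transported by any linear isomorphism via Lemma \ref{lem:inv}, so an equivalence $\Phi\sim G+u_1^3$ carries $K_\Phi$ to the corresponding hyperplane on the other side and hence $F=\Phi|_{K_\Phi}$ to $G$, giving $F\sim G$ and therefore $F$ partitionable, the desired contradiction. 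The content here is a Krull--Schmidt/Witt-type cancellation for cubics under the Thom--Sebastiani sum, and the delicate points I would need to nail down carefully are that $S_1^{\widehat F}$ is \emph{exactly} one point and that restriction to the canonically extracted hyperplane $K_\Phi$ recovers $F$ on the nose.
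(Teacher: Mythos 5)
Your parts (1), (2) and the forward half of (3) are correct and essentially coincide with the paper's treatment (the paper dismisses (1)--(2) as definitional and obtains the forward half of (3) by merging blocks, exactly as you do). For the reverse half of (3) you take a genuinely different route. The paper first reduces to $S_1^F=\emptyset$ via Lemma~\ref{lem:linear}, applies Proposition~\ref{prop:unp+fem} to $\widehat F=F+x_{m+1}^3$ to conclude that every element of $G_{\widehat F}$ has block form $\begin{pmatrix}B&0\\0&\lambda\end{pmatrix}$ with $B\in G_F$, conjugates the order-three symmetry $\Diag(I_{a_1},\xi_3 I_{a_2})$ of the given partition into $G_{\widehat F}$, and reads off from the eigenvalue multiplicities (using $a_1,a_2\ge 2$) that $B$ has both eigenvalues $1$ and $\xi_3$ nontrivially, whence $F$ is partitionable as in Corollary~\ref{cor:partitionbyA}. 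You instead prove a cancellation principle for cube summands: $S_1^{\widehat F}$ is exactly one projective point, a partition $P+Q$ splits $S_1$ as a disjoint union so that point lands in one block, Lemma~\ref{lem:linear} extracts a cube from that block, and the canonical radical hyperplane of the rank-one quadratic $\partial_e\widehat F$ recovers $F$ up to linear equivalence. Both arguments are correct and both lean on Lemmas~\ref{lem:linear} and~\ref{lem:inv}; the paper's is shorter because Proposition~\ref{prop:unp+fem} does the heavy lifting, while yours is more self-contained (it never invokes Proposition~\ref{prop:unp+fem} or the group-theoretic conjugation trick) and isolates a reusable Thom--Sebastiani cancellation statement. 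One small point to make explicit in a final write-up: when $a_1=2$ the block $P'$ is a nonzero cubic in one variable and then $S_1^{P'}\neq\emptyset$, so your deduction $S_1^{P'}=\emptyset$ silently forces $a_1\ge 3$; this is harmless (the case is simply vacuous), since all you need downstream is that $P'$ and $Q$ are both nonzero.
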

	
	\begin{proof}
	(1) and (2) follow from definitions. To prove (3), from now on, we assume that $d=3$. If $F$ is partitionable, then we may assume that $F$ has a partition of type $(a,b)$ with $a\ge 1$, $b\ge2$, which implies that $\widehat{F}$ has a partition of type $(a+1,b)$ with $a+1\ge 2$, $b\ge 2$. On the other hand, if $\widehat{F}$ has an $(a_1,a_2)$-type partition with $a_1\ge 2$, $a_2\ge 2$, then there exists $A\in \GL(m+1,\C)$ such that $$A(\widehat{F})=H(x_1,...,x_{a_1})+K(x_{a_1+1},...,x_{a_1+a_2}).$$
	For positive integers $n_1,n_2$, we set $B_{n_1,n_2}:=\begin{pmatrix} I_{n_1}&0\\0&\xi_3 I_{n_2}\end{pmatrix}$. Then $B_{a_1,a_2}\in G_{A(\widehat{F})}$. Note that $G_{A(\widehat{F})}=A^{-1} G_{\widehat{F}} A$. Then by Lemma \ref{lem:linear} and Proposition \ref{prop:unp+fem}, we have $A B_{a_1,a_2} A^{-1}\in G_{\widehat{F}}$ and $A B_{a_1,a_2} A^{-1}=\begin{pmatrix} B &0\\0&\lambda \end{pmatrix}$, where $B\in G_F$ and $\lambda\in \C$. Since $B_{a_1,a_2}$ and $A B_{a_1,a_2} A^{-1}$ have the same eigenvalues (counting multiplicities), we have that $B$ is similar to either $B_{a_1-1,a_2}$ or $B_{a_1,a_2-1}$. From this, we conclude that $F$ is partitionable (see Corollary \ref{cor:partitionbyA}).
	\end{proof}

	 Every $(n,d)$-group has at least one $C_d$-covering $(n+1,d)$-group, which gives strong constraints on $(n,d)$-groups. 
	
	\begin{lemma}\label{lem:(4,3)and(5,3)}
	Let $G$ be a subgroup of ${\rm Aut}(X_F)$, where $F=F(x_1,...,x_{n+2})$ is a smooth form of degree $d$, where $n\ge 2$, $d\ge 3$, $(n,d)\neq (2,4)$.  Let $\widehat{F}$ denote the degree $d$ smooth form $F+x_{n+3}^d$. Then the following statements hold:
	\begin{enumerate}
  \item [(1)] There exists a subgroup $G'\subset \GL(n+3,\C)$ such that: (i) $G'$ is a $C_d$-covering $(n+1,d)$-group of $G$, (ii) $G'$ is an $\widehat{F}$-lifting of $\pi(G')\subseteq {\rm Aut}(X_{\widehat{F}})$, and (iii) $G'$ contains the matrix $\begin{pmatrix}\xi_d I_{n+2}&0\\0&1\end{pmatrix}$;

  \item [(2)] If $G$ admits an $F$-lifting, then $G\times C_d$ is an $(n+1,d)$-group. In particular, if $d$ is a prime number not dividing $|G|$, then $G\times C_d$ is an $(n+1,d)$-group.
\end{enumerate}
	\end{lemma}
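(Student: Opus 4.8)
The plan is to produce the desired subgroups of $\GL(n+3,\C)$ by hand, adjoining the new coordinate $x_{n+3}$, and then to read off the group theory from how the added block interacts with $\pi$. Write $\iota\colon \GL(n+2,\C)\to\GL(n+3,\C)$ for the block embedding $\iota(A)=\begin{pmatrix}A&0\\0&1\end{pmatrix}$ and put $C:=\Diag(1,\dots,1,\xi_d)\in\GL(n+3,\C)$. A direct computation from the action in (2.1) gives $\iota(A)(\widehat F)=A(F)+x_{n+3}^d$ and $C(\widehat F)=F+\xi_d^d x_{n+3}^d=\widehat F$; hence $\iota(G_F)\subseteq G_{\widehat F}$ and $C\in G_{\widehat F}$ has order $d$. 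The hypotheses $n\ge 2,\ d\ge 3,\ (n,d)\neq(2,4)$ ensure that Lemma \ref{lem:exactseq} applies both to $F$, giving $\Aut(X_F)=\pi(G_F)$ with $\ker(\pi|_{G_F})=N:=\langle\xi_d I_{n+2}\rangle\cong C_d$, and to $\widehat F$, giving $\Aut(X_{\widehat F})=\pi(G_{\widehat F})$; moreover $\widehat F$ is smooth of degree $d$ by Lemma \ref{lem:widehatF}(1).

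For part (1), I would set $\widetilde G:=(\pi|_{G_F})^{-1}(G)\subseteq G_F$, so that $1\to N\to\widetilde G\to G\to 1$ is a central extension, and define $G':=\iota(\widetilde G)$. As $\iota$ is an injective homomorphism, $N':=\iota(N)$ is a central subgroup of $G'$ isomorphic to $C_d$ with $G'/N'\cong\widetilde G/N\cong G$, and its generator is precisely the matrix in (iii). The crucial point is that $\pi|_{G'}$ is injective: any scalar matrix $\lambda I_{n+3}\in G'$ has last diagonal entry $1$, so $\lambda=1$. Together with $G'\subseteq G_{\widehat F}$ this says that $G'$ is an $\widehat F$-lifting of $\pi(G')\subseteq\pi(G_{\widehat F})=\Aut(X_{\widehat F})$, which is (ii), and that $G'\cong\pi(G')$ is a subgroup of the automorphism group of the smooth degree-$d$ hypersurface $X_{\widehat F}\subset\P^{n+2}$ of dimension $n+1$, so $G'$ is a $C_d$-covering $(n+1,d)$-group of $G$, giving (i).

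For the first assertion of part (2), start from an $F$-lifting $\widetilde G\subseteq G_F$ of $G$ and set $H:=\langle\iota(\widetilde G),C\rangle$. Since $\iota(A)$ and $C$ commute and $\iota(\widetilde G)\cap\langle C\rangle=\{I_{n+3}\}$ (elements of $\iota(\widetilde G)$ have last entry $1$), one gets $H\cong\widetilde G\times\langle C\rangle\cong G\times C_d$. Again $\pi|_H$ is injective: a scalar $\lambda I_{n+3}=\iota(A)C^k\in H$ forces $A=\lambda I_{n+2}\in\widetilde G$, but a lifting contains no nontrivial scalar, so $\lambda=1$ and $C^k=I_{n+3}$. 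Hence $\pi(H)\cong G\times C_d$ sits inside $\Aut(X_{\widehat F})$, proving that $G\times C_d$ is an $(n+1,d)$-group. For the final clause, when $d$ is a prime not dividing $|G|$ the central extension $1\to N\to\widetilde G\to G\to 1$ of part (1) has $\gcd(d,|G|)=1$, so by Schur--Zassenhaus it splits, and centrality makes the complement a direct factor $\widetilde G\cong N\times G''$ with $G''\cong G$; this $G''\subseteq G_F$ is an $F$-lifting of $G$, so the first half of (2) applies.

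The construction is elementary, and the only steps demanding care---where the whole argument really lives---are the two injectivity checks for $\pi$ on the enlarged groups, which guarantee that the abstract isomorphism type is preserved inside $\PGL(n+3,\C)$, and, for the last clause, the passage from the coprimality hypothesis to an honest $F$-lifting via the splitting of the central extension. I expect the injectivity verification in part (2), keeping track of scalars of the mixed form $\iota(A)C^k$, to be the most error-prone point, though it is ultimately routine.
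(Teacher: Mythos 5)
Your proof is correct and follows essentially the same route as the paper: both take the preimage $\widehat{G}$ of $G$ under $\pi|_{G_F}$ (a central extension of $G$ by $N=\langle\xi_d I_{n+2}\rangle$), push it into $\GL(n+3,\C)$ via the block embedding, and observe that the extra coordinate kills any nontrivial scalars so that $\pi$ becomes injective on the enlarged group. The only substantive difference is at the final clause of (2), where the paper simply cites \cite[Theorem 4.8]{OY19} for the existence of an $F$-lifting when $\gcd(d,|G|)=1$, whereas you supply the standard Schur--Zassenhaus splitting argument for the central extension directly; this is a valid (and self-contained) substitute.
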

	
	\begin{proof}
	 Recall that the center $Z(G_F)$ of $G_F=\{A\in \GL(n+2,\C)\mid A(F)=F\}$ contains $N:=\langle\xi_d I_{n+2} \rangle\cong C_d$. By Lemma \ref{lem:exactseq}, we have the short exact sequence of groups
	$$1\rightarrow N \xrightarrow{i} G_F \xrightarrow{\pi|G_F} {\rm Aut}(X_F)\rightarrow 1,$$
	where $i$ is the natural inclusion map. Let $\widehat{G}< G_F$ be the pre-image of the subgroup $G\subseteq {\rm Aut}(X_F)$ under the map $\pi|G_F$. Then $N\subseteq Z(\widehat{G})$ and $\widehat{G}/N\cong G$, which means that $\widehat{G}$ is a $C_d$-covering group. Let $G^\prime:=\{ \begin{pmatrix}A&0\\0&1\end{pmatrix} \mid A\in \widehat{G}\}$. Clearly $\widehat{G}$ is isomorphic to $G^\prime\cong \pi (G')\subseteq \Aut(X_{\widehat{F}})$. From this, we conclude the statement (1). 
	
	If $G$ admits an $F$-lifting, say $\widetilde{G}$, then 
	$$\widehat{G}=\widetilde{G}\times N\cong G\times C_d,$$ which implies the first sentence in (2). Then by \cite[Theorem 4.8]{OY19}, the second sentence in (2) holds. This completes the proof of the lemma.	\end{proof}
	
	\begin{example}\label{ex:43by53}
	By Theorem \ref{thm:Main}, $C_{64}$ is the only $(5,3)$-group of order $64$ and $C_{64}\times C_3$ is not a $(5,3)$-group. Then by Lemma \ref{lem:(4,3)and(5,3)},  there is no $(4,3)$-group of order $64$.
	\end{example}
	
	From now on, we focus on $(4,3)$-groups and their $C_3$-covering $(5,3)$-groups.
	
	\begin{theorem}\label{thm:ruleout43}
	Let $G$ be a finite group. Suppose that for every $C_3$-covering $(5,3)$-group $\widehat{G}$ of $G$, one of the following statements holds: 
	
	\begin{enumerate}
  \item [(i)] $\widehat{G}$ contains one of the $19$ abelian groups in Theorem \ref{thm:control};
  \item [(ii)] $\widehat{G}$ has no special $(5,3)$-representation $\rho$ with $\rho(\widehat{G})$ containing $\Diag(\xi_3,...,\xi_3,1)$.
\end{enumerate}
	
	If $G$ is a $(4,3)$-group, then ${\rm Aut}(X_i')$ contains a subgroup isomorphic to $G$ for some $i\in\{1,2,...,9\}$.
	\end{theorem}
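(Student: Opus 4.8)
The plan is to run the whole argument through the single $C_3$-covering $(5,3)$-group produced by Lemma \ref{lem:(4,3)and(5,3)} and then transport the resulting structural information back to the cubic fourfold via Lemma \ref{lem:widehatF}. So suppose $G$ is a $(4,3)$-group, realised as $G\subseteq \Aut(X_F)$ for a smooth cubic form $F=F(x_1,\dots,x_6)$, and set $\widehat{F}:=F+x_7^3$, which is a smooth cubic fivefold form by Lemma \ref{lem:widehatF}(1). By Lemma \ref{lem:(4,3)and(5,3)}(1) there is a $C_3$-covering $(5,3)$-group $\widehat{G}=G'\subset \GL(7,\C)$ of $G$ which is an $\widehat{F}$-lifting of $\pi(G')\subseteq \Aut(X_{\widehat{F}})$ and which contains the central matrix $M:=\Diag(\xi_3,\dots,\xi_3,1)$, with $\pi(G')\cong G'$ and $G'/\langle M\rangle\cong G$. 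The identity inclusion $\rho\colon G'\hookrightarrow \GL(7,\C)$ is then a $(5,3)$-representation with $\rho(G')\ni M$, so the hypothesis of the theorem applies to this $\widehat{G}$.

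First I would dispose of the case where $\rho$ is \emph{not} special. Then some $A\in \rho(G')$ is similar to $\Diag(\xi_3^{a+1},\xi_3^{a+1},\xi_3^a,\xi_3^a,\xi_3^a,\xi_3^a,\xi_3^a)$ or to $\Diag(\xi_3^{a+1},\xi_3^{a+1},\xi_3^{a+1},\xi_3^a,\xi_3^a,\xi_3^a,\xi_3^a)$. Since $\xi_3 I_7\in G_{\widehat{F}}$ and $\rho(G')\subseteq G_{\widehat{F}}$, the element $\xi_3^{-a}A\in G_{\widehat{F}}$ is similar to $\Diag(\xi_3,\xi_3,1,1,1,1,1)$ or $\Diag(\xi_3,\xi_3,\xi_3,1,1,1,1)$, so Corollary \ref{cor:partitionbyA} shows that $\widehat{F}$ has a $(2,5)$- or $(3,4)$-type partition. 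Both parts have size $\ge 2$, so Lemma \ref{lem:widehatF}(3) forces $F$ to be partitionable, and Theorem \ref{thm:4parti} gives $G\subseteq \Aut(X_F)\hookrightarrow \Aut(X_i')$ for some $i\in\{1,\dots,7\}$, as desired.

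It remains to treat the case where $\rho$ is special. Then $\rho$ is a special $(5,3)$-representation of $\widehat{G}$ with $\rho(\widehat{G})\ni M$, so condition (ii) of the hypothesis fails for $\widehat{G}$; hence (i) holds and $\pi(G')\cong G'$ contains one of the $19$ abelian groups of Theorem \ref{thm:control}. I would then re-read the proof of Theorem \ref{thm:control} applied to $\pi(G')\subseteq \Aut(X_{\widehat{F}})$: it shows that either $\widehat{F}$ has a $(2,5)$- or $(3,4)$-type partition, or $G_{\widehat{F}}$ consists of semi-permutation matrices. In the first alternative I conclude exactly as in the previous paragraph (via Lemma \ref{lem:widehatF}(3) and Theorem \ref{thm:4parti}). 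In the second alternative Lemma \ref{lem:widehatF}(2) forces $G_F$ to consist of semi-permutation matrices as well; then $X_F$ is a smooth monomial cubic fourfold, and the differential method (as in Theorem \ref{thm:autex} and \cite[Theorem 3.18]{OY19}) classifies such $X_F$ with unpartitionable $F$, up to the induced action on subgroups, as the length-six chain $F_8'$ and the cyclic ``loop'' $F_9'$, whence $G\hookrightarrow \Aut(X_j')$ with $j\in\{8,9\}$; the partitionable monomial cases (e.g.\ Fermat) are again absorbed by Theorem \ref{thm:4parti}. Together these branches yield $j\in\{1,\dots,9\}$.

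The main obstacle is this final transfer step: turning the \emph{abstract} conclusion of Theorem \ref{thm:control} into \emph{geometric} information about the fourfold $X_F$. The device that makes it work is Lemma \ref{lem:widehatF}, which lets both partitionability and the semi-permutation property descend from $\widehat{F}$ to $F$; one must also track the central $\langle M\rangle\cong C_3$ and the quotient $G=G'/\langle M\rangle$ carefully, and check that the monomial (semi-permutation, unpartitionable) smooth cubic fourfolds are exhausted, up to embedding of their automorphism groups, by $X_8'$ and $X_9'$. By contrast, the specialness dichotomy and the partition branch are routine given Corollary \ref{cor:partitionbyA} and Theorem \ref{thm:4parti}.
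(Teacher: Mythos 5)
Your proposal is correct and follows essentially the same route as the paper: pass to the specific $C_3$-covering $(5,3)$-group $G'$ of Lemma \ref{lem:(4,3)and(5,3)}(1), use the specialness dichotomy for the inclusion $G'\hookrightarrow \GL(7,\C)$ together with Corollary \ref{cor:partitionbyA}, Lemma \ref{lem:widehatF} and Theorem \ref{thm:4parti} in the non-special branch, and rerun the argument of Theorem \ref{thm:control} in the branch where (i) holds. Your write-up merely reorganizes the case split (on specialness of $\rho$ rather than on which of (i)/(ii) holds) and fills in the descent from $\widehat{F}$ to $F$ that the paper leaves implicit in the phrase ``like in the proof of Theorem \ref{thm:control}''.
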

	
	\begin{proof}
	 Suppose $G<\Aut(X_F)$, where $F=F(x_1,...,x_6)$ is a smooth cubic form.  Let $G'$ be as in Lemma \ref{lem:(4,3)and(5,3)} (1). In particular, $G'$ is a $C_3$-covering $(5,3)$-group of $G$. If (ii) holds for $G'$, then there exists $A\in G_{\widehat{F}}$ such that $A$ is similar to either $\Diag(\xi_3,\xi_3,1,1,1,1,1)$ or $\Diag(\xi_3,\xi_3,\xi_3,1,1,1,1)$, which implies the theorem by Corollary \ref{cor:partitionbyA}, Lemma \ref{lem:widehatF}, Theorem \ref{thm:4parti}. If (i) holds for $G'$, then like in the proof of Theorem \ref{thm:control}, we conclude that ${\rm Aut}(X_i')$ contains a subgroup isomorphic to $G$ for some $i\in\{1,2,...,9\}$.  This completes the proof of the theorem.
	\end{proof}

	 By adapting Strategy \ref{strategy}, we use the following strategy to classify $(4,3)$-groups.

\begin{strategy}\label{strategy43}
Let $m$ be a positive integer. Suppose that all $(4,3)$-groups of orders $m^\prime<m$ satisfying $m^\prime \mid m$ have been found. We classify $(4,3)$-groups of order $m$ as follows.

	\textbf{Step 1}: We compute the (finite) set $\mathcal{B}_m'$ of groups $G$ of order $m$ satisfying the following conditions: 
	\begin{enumerate}
  \item [(1)] All proper subgroups of  $G$ are $(4,3)$-groups;
  \item [(2)] $\Aut({X_i'})$ has no subgroup isomorphic to $G$ for all $1\le i\le 15$.
\end{enumerate}	
	If $\mathcal{B}_m'=\emptyset$, then we are done. Otherwise, we do case-by-case check for groups in $\mathcal{B}_m'$. For each $G\in \mathcal{B}_m'$, go to \textbf{Step 2}.	
	
	\textbf{Step 2}: Compute the (finite) set $\mathcal{C}_G$ defined as follows: if $3\mid m $, $$\mathcal{C}_G:=\{\widehat{G}\mid \widehat{G} \text{ is a }C_3\text{-covering } (5,3)\text{-group of }G\};$$
	if $3\nmid m $,  $\mathcal{C}_G:=\{\widehat{G}\mid \widehat{G}\cong C_3\times G\text{ and } \widehat{G}\text{ is a }(5,3)\text{-group}\}$. If $\mathcal{C}_G=\emptyset$, then $G$ is ruled out (Lemma \ref{lem:(4,3)and(5,3)}). Otherwise, go to \textbf{Step 3}.
	
	\textbf{Step 3}: For each $\widehat{G}\in \mathcal{C}_G$, we prove that either (i) $\widehat{G}$ contains one of the $19$ groups in Theorem \ref{thm:control} by computing abelian subgroups of $\widehat{G}$ or (ii) $\widehat{G}$ has no special $(5,3)$-representation $\rho$ with $\rho(\widehat{G})$ containing $\Diag(\xi_3,...,\xi_3,1)$
by computing $\mathcal{R}_{\widehat{G}}$ (and applying \textbf{Step 3} of Strategy \ref{strategy} to each $\chi\in \mathcal{R}_{\widehat{G}}$ if $\mathcal{R}_{\widehat{G}}\neq\emptyset$). Then by Theorem \ref{thm:ruleout43}, $G$ is ruled out.
	\end{strategy}	
	
	Next we prove Theorem \ref{thm:fourfold} based on Strategy \ref{strategy43}.
	
	\begin{proof}[Proof of Theorem \ref{thm:fourfold}]
		By Theorem \ref{thm:Main} and Lemma \ref{lem:(4,3)and(5,3)} (see Example \ref{ex:43by53}), the order of a $(4,3)$-group is of the following form 
		$$2^{a_2}\cdot 3^{a_3}\cdot 5^{a_5}\cdot 7^{a_7}\cdot 11^{a_{11}},$$ 
		where $a_2\le 5$, $a_3\le 7$, $a_5\le 1$, $a_7\le 1$, $a_{11}\le 1$.  Then by Theorem \ref{thm:ruleout43}, we are reduced to classify $(4,3)$-groups of orders dividing $2^{5}\cdot 3^{3}\cdot 5\cdot 7$ (see the proof of Proposition \ref{prop:boundorder}).

		  Similar to the proof of Theorem \ref{thm:Main} based on Strategy \ref{strategy}, we use Strategy \ref{strategy43} to rule out groups inductively in the sense of increasing orders $m$ of relevant groups. We only give the details for a typical case $m=16$ (other cases can be found in the ancillary file {\ttfamily Theorem6.1.txt} to \cite{YYZ23}; the outputs of our computer-aided calculations for the case $m=16$ are contained in the ancillary file {\ttfamily Example-m16-43-groups.txt}). 
		  
		  By \textbf{Step 1}, $\mathcal{B}_{16}'$ consists of $5$ groups: $C_2\times Q_8$, $C_2^4$, $(C_4\times C_2)\rtimes C_2$, $C_4\rtimes C_4$, $C_4\times C_2^2$. In \textbf{Step 2}, for the first $2$ (resp. the last $3$) groups $G$ in the list, we have $\mathcal{C}_G=\emptyset$ (resp. $\{C_3\times G\}$). Thus, $C_2\times Q_8$ and $C_2^4$ are not $(4,3)$-groups. By \textbf{Step 3}, $\mathcal{R}_{\widehat{G}}=\emptyset$ for $\widehat{G}=C_3\times ((C_4\times C_2)\rtimes C_2)$, $C_3\times (C_4\rtimes C_4)$, which implies that $(C_4\times C_2)\rtimes C_2$ and $C_4\rtimes C_4$ are not $(4,3)$-groups. Applying \textbf{Step 3} to $\widehat{G}=C_3\times C_4\times C_2^2$, we have $\mathcal{R}_{\widehat{G}}$ contains only one element $\chi$ and the image $\rho(\widehat{G})$ of the special almost $(5,3)$-representation $\rho$ affording $\chi$ is generated by $\Diag(-1,\xi_4,1,1,1,1,1)$ and $\Diag(1,1,-1,1,1,1,1)$,	  $\Diag(1,1,1,-1,1,1,1)$ and $\Diag(1,1,\xi_3,\xi_3^2,\xi_3^2,\xi_3,1)$. From this, we conclude that $\widehat{G}$ satisfies (ii) in Theorem \ref{thm:ruleout43}, which implies that $G=C_4\times C_2^2$ is not a $(4,3)$-group. Thus, we complete the proof for the case $m=16$. Similarly, we can handle all other cases $m\mid 2^{5}\cdot 3^{3}\cdot 5\cdot 7$. This completes the proof of the theorem.
	\end{proof}
	
	\begin{remark}
	Automorphisms of cubic fourfolds naturally induce automorphisms of their Fano varieties of lines which are hyperk\"{a}hler manifolds of $K3^{[2]}$-type. In particular, all $(4,3)$-groups can act faithfully on hyperk\"{a}hler manifolds of $K3^{[2]}$-type. It would be interesting to apply our classification of $(4,3)$-groups to study (fixed point loci of) finite groups of automorphisms of hyperk\"{a}hler manifolds of $K3^{[2]}$-type.
	\end{remark}
	
	\subsection{Symplectic automorphism groups} The {\it symplectic automorphism group} $\Aut^s(X_F)$ of a smooth cubic fourfold $X_F$ consists of the symplectic automorphisms $f$ of $X_F$ (i.e., the induced action on $H^{3,1}(X_F)\cong \C$ is trivial). If the defining equation $F$ and matrix generators of $\Aut(X_F)$ are explicitly given, one can directly compute $\Aut^s(X_F)$ via the following result.
	
	\begin{lemma}[{\cite[Lemma 3.2]{Fu16}}]\label{lem:symcon}
		Let $X$ be a smooth cubic fourfold defined by $F(x_1,\dots,x_6)$. Let $f=[A]$ be an element in $\Aut(X)$, $A\in \GL(6,\mathbb{C})$, with ${\rm ord}(f)={\rm ord}(A)$ and $A(F)=\lambda F$ with $\lambda\in \C$, then  $f$ is symplectic if and only if $\det(A)=\lambda^2$.
	\end{lemma}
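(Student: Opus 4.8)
The plan is to make the action of $f$ on the one-dimensional space $H^{3,1}(X)$ completely explicit via Griffiths' residue description of the Hodge structure of the cubic fourfold $X=X_F\subset\P^5$. First I would recall (and cite) the statement that for a smooth degree-$3$ hypersurface in $\P^5$, Griffiths' isomorphism identifies $H^{3,1}(X)$ with the degree-$0$ graded piece $(R_F)_0\cong\C$ of the Jacobian ring $R_F=\C[x_1,\dots,x_6]/(\partial F/\partial x_1,\dots,\partial F/\partial x_6)$, a generator being the residue
\[
\omega=\Res\!\left(\frac{\Omega}{F^2}\right),\qquad \Omega=\sum_{i=1}^{6}(-1)^{i-1}x_i\,dx_1\wedge\cdots\wedge\widehat{dx_i}\wedge\cdots\wedge dx_6 .
\]
Here the pole order $2$ and the constant numerator are forced by $2\cdot 3-6=0$, and the residue map is equivariant for the induced action of the automorphism $f=[A]$; pinning down $\omega$ this way is what allows us to read off the scalar by which $f$ acts.

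Next I would compute how the two factors $\Omega$ and $F^2$ transform under the linear substitution $\phi\colon x\mapsto Ax$ representing $f$. Since $\Omega$ is the contraction of the standard volume form $dx_1\wedge\cdots\wedge dx_6$ with the Euler vector field $E=\sum_i x_i\,\partial/\partial x_i$, and $E$ is preserved by every linear isomorphism while the volume form pulls back with Jacobian $\det(A)$, one gets $\phi^*\Omega=\det(A)\,\Omega$. On the other hand, the hypothesis $A(F)=\lambda F$ is exactly $\phi^*F=\lambda F$, so $\phi^*(F^2)=\lambda^2F^2$. Combining these,
\[
\phi^*\!\left(\frac{\Omega}{F^2}\right)=\frac{\det(A)}{\lambda^2}\cdot\frac{\Omega}{F^2}.
\]
Applying equivariance of $\Res$, the action of $f$ on the generator $\omega$ is multiplication by $\det(A)/\lambda^2$ (up to replacing $A$ by $A^{-1}$, which is immaterial below). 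Thus $f$ acts trivially on $H^{3,1}(X)$, i.e.\ $f$ is symplectic, if and only if $\det(A)=\lambda^2$, which is the claim.

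Finally I would record the consistency check that the scalar $\det(A)/\lambda^2$ is independent of the chosen lift: replacing $A$ by $cA$ sends $\det(A)\mapsto c^{6}\det(A)$ and $\lambda\mapsto c^{3}\lambda$, so $\det(A)/\lambda^2$ is unchanged. Hence the normalization ${\rm ord}(f)={\rm ord}(A)$ in the statement merely fixes a convenient finite-order representative and plays no essential role in the equivalence. The part requiring genuine care is the first paragraph, namely correctly normalizing Griffiths' residue isomorphism so that $\omega=\Res(\Omega/F^2)$ really generates $H^{3,1}(X)$, and verifying that the residue (tube/Poincar\'e) map intertwines $\phi^*$ with $f^*$; the transformation computation itself is a one-line consequence of $\phi^*\Omega=\det(A)\,\Omega$ together with $\phi^*F=\lambda F$.
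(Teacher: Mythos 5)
Your proof is correct; the paper itself gives no proof of this lemma, quoting it directly from \cite[Lemma 3.2]{Fu16}, and your Griffiths-residue computation (identifying $H^{3,1}(X)$ with $\C\cdot\Res(\Omega/F^2)$ and reading off the factor $\det(A)/\lambda^2$ from $\phi^*\Omega=\det(A)\,\Omega$ and $\phi^*F=\lambda F$) is precisely the standard argument used in that cited source. The lift-independence check at the end is a nice touch but, as you note, not essential to the equivalence.
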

	
	\begin{example}
	The automorphism group ${\rm Aut}(X_5')\cong C_{48}$ of $X_5'$ is generated by the matrix $A_{X_5'}:={\rm diag}(\xi_{16},\xi_8^7, \xi_4 ,-1,1,\xi_3)$. Since $A_{X_5'}(F_5')=F_5'$ and ${\rm det}(A_{X_5'})$ is a $48$-th primitive root of unity, by Lemma \ref{lem:symcon}, we have that ${\rm Aut}^s(X_5')$ is trivial. More generally, if $\Aut(X_F)$ admits an $F$-lifting $\widetilde{\Aut(X_F)}\subset \GL(6,\C)$, then $$\Aut^s(X_F)\cong (\widetilde{\Aut(X_F)}\cap {\rm SL}(6,\C)).$$ 
	\end{example}
	
	The symplectic automorphism groups $\Aut^s(X_i')$ $(i=1,2,...,15)$ can be computed similarly and the result is summarized in the Table \ref{table:symplectic}. This is consistent with  \cite[Theorems 1.2 and 1.8]{LZ22} which we will recall below. By $\Aut^s(X_i')$ being maximal (as indicated by $\checkmark$ in the last column of the table), we mean that $\Aut^s(X_i')$ is not isomorphic to a proper subgroup of $\Aut^s(X)$ for any smooth cubic fourfold $X$.
	In fact, by computer calculations using GAP,  $\Aut^s(X_i')$ is not isomorphic to any proper subgroup of the groups in \cite[Theorem 1.2]{LZ22} if and only if $i\in \{1,4,6,7,10,11,14\}$ (see the ancillary file {\ttfamily43-groups.txt}). 
	\begin{footnotesize}
		\begin{table}[htbp]
			\renewcommand\arraystretch{1.2}
			\begin{tabular}{cccccc}
			 
			\hline
		$i$ & $\Aut(X_i')$ &$|\Aut(X_i')|$ & $\Aut^s(X_i')$ &$|\Aut^s(X_i')|$ & maximal\\
			\hline
			1 & $C_3^5 \rtimes S_6$&174960& $C_3^4\rtimes A_6$&29160&\checkmark \\

			2 & $((C_3 \times (C_3^3\rtimes C_3))\rtimes C_3) \rtimes (C_4 \times C_2)$ &5832&$(C_3\times (C_3^2\rtimes C_3))\rtimes C_2$&486& \\

			3 & $C_8 \times (C_3^2 \rtimes C_2)$&144&$S_3$&6& \\

			4 & $S_5 \times (C_3^2 \rtimes C_2)$ &2160&$A_5\rtimes S_3$&360& \checkmark \\

			5 & $C_{48}$ &48&trivial&1& \\

			6 & $\rm{PSL}(2,11) \times C_3$ &1980&$\PSL(2,11)$&660&\checkmark  \\

			7 & $((C_3 \times(C_3^2\rtimes C_3))\rtimes C_3)\rtimes (C_4^2 \rtimes C_2)$ &7776&$((C_3 \times(C_3^2\rtimes C_3))\rtimes C_3)\rtimes Q_8$&1944&\checkmark  \\ 

			8 & $C_{32}$&32&trivial&1& \\

			9 & $C_{21} \rtimes C_6$ &126&$C_7 \rtimes C_3$&21& \\

			10 & $M_{10}$&720&$M_{10}$&720& \checkmark \\

			11 & $S_7$&5040&$A_7$&2520& \checkmark \\

			12 & $(C_8 \times C_2)\rtimes C_2$&32&$QD_{16}$&16& \\

			13 & ${\rm PSL}(3,2) \rtimes C_2$&336&$\PSL(3,2)$&168& \\

			14 & $\GL(2,3)$&48&$\GL(2,3)$&48& \checkmark\\

			15 & $((C_3 \times C_3)\rtimes Q_8)\rtimes C_3$&216&$(C_3 \times C_3)\rtimes Q_8$&72& \\
			\hline
			\end{tabular}
				 \vspace*{.1in}
			\caption{Symplectic automorphism groups of cubic fourfolds}
			\label{table:symplectic}
		\end{table}
	\end{footnotesize}

	Based on the global Torelli theorem for cubic fourfolds and lattice theory, Laza--Zheng \cite[Theorems 1.2]{LZ22} identified all possible $\Aut^s(X)$ for smooth cubic fourfolds $X$. The following result is a direct consequence of \cite[Theorems 1.2 and 1.8]{LZ22}.
	
	\begin{theorem}\label{thm:LZ} 
	A finite group $G$ can act  faithfully and symplectically on a smooth cubic fourfold if and only if $G$ is isomorphic to a subgroup of one of the following $7$ groups: $$\GL(2,3), C_3^4\rtimes A_6, A_7, ((C_3 \times(C_3^2\rtimes C_3))\rtimes C_3)\rtimes Q_8, M_{10}, {\rm PSL}(2,11), A_5\rtimes S_3.$$ Moreover, for the moduli space $\mathcal{M}_G$ of the smooth cubic fourfolds $X$ with ${\rm Aut}^s(X)$ containing $G$ as a subgroup, the following statements hold:
	\begin{enumerate}
  \item [(1)] If $G=\GL(2,3)$, then ${\rm dim}(\mathcal{M}_G)=1$;
  \item [(2)] If $G=C_3^4\rtimes A_6$ $($resp. $A_7, ((C_3 \times(C_3^2\rtimes C_3))\rtimes C_3)\rtimes Q_8, M_{10}, {\rm PSL}(2,11), A_5\rtimes S_3$$)$, then the cardinality $|\mathcal{M}_G|=1$ $($resp. $2, 1, 2, 1, 1$$)$. 
\end{enumerate}
	\end{theorem}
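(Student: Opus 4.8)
The plan is to derive the statement directly from the results of Laza--Zheng, using the explicit computations recorded in Table \ref{table:symplectic} as the bridge between their abstract classification and our coordinate-based examples. First I would invoke \cite[Theorem 1.2]{LZ22}, which provides the complete list of the maximal symplectic automorphism groups of smooth cubic fourfolds. The seven groups in the statement are precisely this list, so the ``only if'' direction is immediate: any finite group acting faithfully and symplectically on a smooth cubic fourfold is, by their classification, isomorphic to a subgroup of one of the seven.

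For the ``if'' direction it remains to confirm that each of the seven groups is genuinely realized as a full symplectic automorphism group and that it is maximal in the required sense. This is supplied by the entries marked with $\checkmark$ in Table \ref{table:symplectic}: for $i\in\{1,4,6,7,10,11,14\}$ the cubic fourfold $X_i'$ satisfies $\Aut^s(X_i')\cong G_i$ for the corresponding group $G_i$ in the list, as computed above via Lemma \ref{lem:symcon}. To justify maximality I would appeal to the GAP computation (in the ancillary file {\ttfamily 43-groups.txt}) verifying that $\Aut^s(X_i')$ is not isomorphic to a proper subgroup of any of the groups appearing in \cite[Theorem 1.2]{LZ22}, which holds exactly for these seven indices. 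This matches the seven groups of the statement one-to-one.

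Finally, the moduli-space assertions follow directly from \cite[Theorem 1.8]{LZ22}: that result records both ${\rm dim}(\mathcal{M}_{\GL(2,3)})=1$ and the cardinalities of $\mathcal{M}_G$ for the remaining six groups, which are exactly the numbers $1,2,1,2,1,1$ quoted in part (2), taken in the order $C_3^4\rtimes A_6$, $A_7$, $((C_3 \times(C_3^2\rtimes C_3))\rtimes C_3)\rtimes Q_8$, $M_{10}$, $\PSL(2,11)$, $A_5\rtimes S_3$.

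The main point requiring care --- and the only genuinely non-formal step --- is the consistency check between our explicit realizations and Laza--Zheng's lattice-theoretic classification: one must confirm that the abstract isomorphism types of the $\Aut^s(X_i')$ agree with their list and that the maximality markings are correct. This matching is precisely what the computer verification accomplishes, after which the theorem follows by transporting the statements of \cite[Theorems 1.2 and 1.8]{LZ22} through Table \ref{table:symplectic}.
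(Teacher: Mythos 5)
Your proposal is correct and follows essentially the same route as the paper, which presents Theorem \ref{thm:LZ} as a direct consequence of \cite[Theorems 1.2 and 1.8]{LZ22}, with Table \ref{table:symplectic} and the GAP subgroup check serving only as the consistency/maximality verification you describe. Nothing further is needed.
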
	
	
	\begin{remark}
	Using notations in \cite[Theorem 1.2]{LZ22}, the $7$ groups in Theorem \ref{thm:LZ} are isomorphic to $T_{48}$, $3^4: A_6$, $A_7$, $3^{1+4}:2.2^2$, $M_{10}$, $L_2(11)$, $A_{3,5}$ respectively.  Note that $T_{48}$ is one of the $11$ maximal finite groups acting on K3 surfaces faithfully and symplectically (\cite{Mu88}).
	\end{remark}
	
	Among the $8$ smooth cubic fourfolds with maximal symplectic automorphism groups in Theorem \ref{thm:LZ} (2),  explicit defining equations for $6$ of them are previously known (see \cite[Table 11]{HM19}, \cite[Theorem 1.8]{LZ22}). Let $X^i(A_7)$ and $X^i(M_{10})$ ($i=1,2$) be as in \cite[Theorem 1.8]{LZ22}. Note that $X^1(A_7)\cong X_{11}'$ and $X^1(M_{10})\cong X_{10}'$. To the best of our knowledge, explicit defining equations for $X^2(A_7)$ and $X^2(M_{10})$ are  unknown. As a by-product of our classification of $(4,3)$-groups and $(5,3)$-groups, we solve this open problem.

	\begin{theorem}\label{thm:A7}
	Let $F_{A_7}=x_1^3+x_2^3+x_3^3+\frac{12}{5}x_1x_2x_3+x_1x_4^2+x_2x_5^2+x_3x_6^2+\frac{4\sqrt{15}}{9}x_4x_5x_6$. Then the smooth cubic fourfold $X_{F_{A_7}}$ satisfies $$\Aut^s(X_{F_{A_7}})=\Aut(X_{F_{A_7}})\cong A_7.$$ In particular, $X_{F_{A_7}}$ is isomorphic to $X^2(A_{7})$.
	\end{theorem}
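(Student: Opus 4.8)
The plan is to exploit the identity $\widehat{F_{A_7}}=F_{16}$, where $F_{16}$ is the cubic form of Example $(16)$ in Subsection \ref{mainex}, so that the already computed automorphism group of the cubic fivefold $X_{16}$ can be pushed down to the cubic fourfold $X_{F_{A_7}}$. First I would settle smoothness: if $X_{F_{A_7}}\subset\P^5$ were singular at a point $p$, then $(p:0)$ would be a singular point of $X_{\widehat{F_{A_7}}}=X_{16}$, contradicting the (computer-verified) smoothness of $X_{16}$; hence $X_{F_{A_7}}$ is a smooth cubic fourfold. Next, by Theorem \ref{thm:Main} and Remark \ref{rem:Aut=G} we have $\Aut(X_{16})=G_{X_{16}}\cong C_3.A_7$, and Lemma \ref{lem:exactseq} gives the exact sequence $1\to\langle\xi_3 I_7\rangle\to G_{F_{16}}\to\Aut(X_{16})\to1$, so that $|G_{F_{16}}|=3\cdot 7560=22680$.

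The central step is to show that every matrix in $G_{F_{16}}$ is block-diagonal with respect to the splitting $\{x_1,\dots,x_6\}\sqcup\{x_7\}$. For this I first argue that $F_{A_7}$ is unpartitionable: otherwise, by Lemma \ref{lem:widehatF}(3), $F_{16}=\widehat{F_{A_7}}$ would admit an $(a_1,a_2)$-type partition with $a_1,a_2\ge2$; since $F_{16}$ is smooth every variable occurs (Lemma \ref{lem:nsmcubic}), so such a partition is full and hence of $(2,5)$- or $(3,4)$-type, whence Theorem \ref{thm:5+2} would embed $\Aut(X_{16})\cong C_3.A_7$ into some $G_{X_i}$ with $i\le 9$, contradicting Remark \ref{rem:Aut=G}. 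Thus $S_1^{F_{A_7}}=\emptyset$ by Lemma \ref{lem:linear}, and Proposition \ref{prop:unp+fem} (applied with $H=F_{A_7}$ and $K=x_7^3$) forces every element of $G_{F_{16}}$ into the form $\begin{pmatrix}B&0\\0&c\end{pmatrix}$ with $B\in\GL(6,\C)$ and $c\in\C^*$. Imposing $A(F_{16})=F_{16}$ then gives $B\in G_{F_{A_7}}$ and $c^3=1$, so $G_{F_{16}}\cong G_{F_{A_7}}\times C_3$ and $|G_{F_{A_7}}|=7560$. Under this isomorphism $\xi_3 I_7$ corresponds to the diagonally embedded central generator $(\xi_3 I_6,\xi_3)$, and quotienting by it recovers the first factor, giving $G_{F_{A_7}}\cong\Aut(X_{16})\cong C_3.A_7$. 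Since $\langle\xi_3 I_6\rangle$ is central of order $3$ it coincides with the center of $C_3.A_7$, so Lemma \ref{lem:exactseq} yields $\Aut(X_{F_{A_7}})=G_{F_{A_7}}/\langle\xi_3 I_6\rangle\cong A_7$.

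For the symplectic statement I would use that $G_{F_{A_7}}\cong C_3.A_7$ is perfect, being a non-split central extension of the perfect group $A_7$ (a quotient of its Schur cover $C_6.A_7$); hence the homomorphism $\det\colon G_{F_{A_7}}\to\C^*$ is trivial. As every element of $G_{F_{A_7}}$ fixes $F_{A_7}$ (so $\lambda=1$ in the notation of Lemma \ref{lem:symcon}), and any minimal-order lift $A$ of $f\in\Aut(X_{F_{A_7}})$ satisfies $\det A=\lambda^2$ after rescaling into $G_{F_{A_7}}$, Lemma \ref{lem:symcon} shows every automorphism is symplectic; equivalently, the character $\Aut(X_{F_{A_7}})\to\C^*$ recording the action on $H^{3,1}$ is trivial because $A_7$ is perfect. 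Therefore $\Aut^s(X_{F_{A_7}})=\Aut(X_{F_{A_7}})\cong A_7$. Finally, to pin down the isomorphism class I would invoke Theorem \ref{thm:LZ}: $A_7$ is a maximal symplectic group with $|\mathcal{M}_{A_7}|=2$, the two members being $X^1(A_7)\cong X_{11}'$ and $X^2(A_7)$; since $\Aut(X_{11}')\cong S_7\not\cong A_7\cong\Aut(X_{F_{A_7}})$, the fourfold $X_{F_{A_7}}$ is not isomorphic to $X_{11}'$, whence $X_{F_{A_7}}\cong X^2(A_7)$.

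The main obstacle is the middle paragraph. The delicate point is breaking the apparent circularity in proving $F_{A_7}$ unpartitionable — which I resolve by exploiting that $C_3.A_7$ is maximal among $(5,3)$-groups and admits no containment into the other $G_{X_i}$ (Remark \ref{rem:Aut=G}) — together with carefully tracking the several distinct copies of $C_3$ (namely $\langle\xi_3 I_6\rangle$, $\langle\xi_3 I_7\rangle$, the extra factor $\langle\Diag(1,\dots,1,\xi_3)\rangle$, and the center of $C_3.A_7$) to confirm that the relevant quotient is exactly $A_7$ rather than a larger group.
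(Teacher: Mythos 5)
Your proposal is correct and follows essentially the same route as the paper: pass to the fivefold $X_{16}=X_{\widehat{F_{A_7}}}$ with $\Aut(X_{16})\cong C_3.A_7$, descend to the fourfold to get $\Aut(X_{F_{A_7}})\cong A_7$, deduce symplecticity from the fact that $A_7$ is perfect while $\Aut/\Aut^s$ is abelian, and rule out $X^1(A_7)$ because its automorphism group is $S_7$. The one place you add substance is the middle step — proving unpartitionability of $F_{A_7}$ via Lemma \ref{lem:widehatF} and Remark \ref{rem:Aut=G} so that Proposition \ref{prop:unp+fem} forces the $6{+}1$ block structure of $G_{F_{16}}$ — which the paper handles more tersely by reading off that the explicit generators of $\Aut(X_{16})$ are already block-diagonal with last entry $1$; your version is a clean, self-contained justification of the same descent.
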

	
	\begin{proof}
	Let $F:=F_{A_7}$. Recall that $F_{16}=\widehat{F}=F+x_7^3$ and $X_{16}=X_{F_{16}}$ (see the example (16) in Subsection \ref{mainex}). By Theorem \ref{thm:Main} and Remark \ref{rem:Aut=G}, we have $\Aut(X_{16})\cong C_3.A_7$ is generated by $A_{X_{16},1}=\begin{pmatrix}A_{X_{16},1}'&0\\0&1\end{pmatrix}$ and $A_{X_{16},2}=\begin{pmatrix}A_{X_{16},2}'&0\\0&1\end{pmatrix}$, where  $A_{X_{16},1}':=\Diag(1,\xi_3,\xi_3^2,-1,\xi_3,-\xi_3^2)$. Then by Lemma \ref{lem:exactseq}, $\Aut(X_F)$ is generated by $[A_{X_{16},1}']$ and $[A_{X_{16},2}']$, which implies that $\Aut(X_F)\cong A_7$.  Since the quotient group $\Aut(X_F)/\Aut^s(X_F)$ is abelian and $A_7$ is a non-abelian simple group, we have $\Aut^s(X_F)=\Aut(X_F)\cong A_7$. By $A_7\cong \Aut^s(X^1(A_7))\subsetneqq \Aut(X^1(A_7))\cong S_7$, the cubic fourfold $X^1(A_7)$ is not isomorphic to $X_F$. Thus, $X_F\cong X^2(A_7)$. 
	\end{proof}
	
	For $M_{10}$, we prove a somewhat stronger result (note that $M_{10}$ contains $A_6$ as a normal subgroup of index $2$) via our approach of classifying $(4,3)$-groups and $(5,3)$-groups.
	
	\begin{theorem}\label{thm:M10}
	Let $X$ be a smooth cubic fourfold. Then the following three statements are equivalent:
	
	\begin{enumerate}
  \item [(1)] $\Aut(X)=\Aut^s(X)\cong M_{10}$;
  \item [(2)] $M_{10}$ is isomorphic to a subgroup of $\Aut(X)$;
  \item [(3)] $X$ is isomorphic to one of the following two smooth cubic fourfolds: $X_{10}'$ and $X_{F_{M_{10}}}$, where $F_{M_{10}}=x_1^3 + 1/1815(1036\xi_{24}^7 - 5800{\xi_4} - 1576\xi_{24}^5 + 2016{\xi_6} + 4180{\xi_8} + 
	3632{\xi_{12}} - 2644{\xi_{24}} - 3939)x_1x_2^2 + 1/605(1028\xi_{24}^7 - 864{\xi_4} - 1468\xi_{24}^5
	- 1448{\xi_6} + 1280\xi_{24}^3 + 3072{\xi_{12}} + 152{\xi_{24}} - 2270)x_1x_3x_4 + 
	1/3993(25574\xi_{24}^7 + 9032{\xi_4} - 20826\xi_{24}^5 - 18220{\xi_6} - 13744\xi_{24}^3 + 
	13592{\xi_{12}} + 22080{\xi_{24}} - 1231)x_2x_3^2 + 1/3993(41818\xi_{24}^7 + 64576{\xi_4} - 
	1314\xi_{24}^5 - 79580{\xi_6} - 60500\xi_{24}^3 - 20552{\xi_{12}} + 70716{\xi_{24}} + 43177)x_2x_4^2 
	+ 1/19965(-16944\xi_{24}^7 - 50216{\xi_4} - 100168\xi_{24}^5 + 192272{\xi_6} - 55224\xi_{24}^3 
	+ 153712{\xi_{12}} - 145288{\xi_{24}} - 22288)x_2x_5x_6 + 1/6655(-20096\xi_{24}^7 + 
	5560{\xi_4} + 22156\xi_{24}^5 + 6216{\xi_6} - 452\xi_{24}^3 - 17296{\xi_{12}} - 11268{\xi_{24}} + 
	13556)x_3x_5^2 + 1/6655(89336\xi_{24}^7 + 48240{\xi_4} - 8948\xi_{24}^5 - 88392{\xi_6} - 
	106476\xi_{24}^3 + 29824{\xi_{12}} + 50884{\xi_{24}} + 70396)x_4x_6^2$. 
\end{enumerate}
	In particular, $X_{F_{M_{10}}}$ is isomorphic to $X^2(M_{10})$.
	\end{theorem}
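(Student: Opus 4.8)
The plan is to establish the cycle of implications $(1)\Rightarrow(2)\Rightarrow(3)\Rightarrow(1)$, the bulk of the work lying in $(2)\Rightarrow(3)$. The implication $(1)\Rightarrow(2)$ is immediate. For $(2)\Rightarrow(3)$ I would begin with a smooth cubic fourfold $X_F$, $F=F(x_1,\dots,x_6)$, together with an embedding $M_{10}\hookrightarrow\Aut(X_F)$, and apply Lemma \ref{lem:(4,3)and(5,3)}(1): this produces a $C_3$-covering $(5,3)$-group $G'\subset\GL(7,\C)$ of $M_{10}$ which is an $\widehat F$-lifting of a subgroup of $\Aut(X_{\widehat F})$, where $\widehat F=F+x_7^3$, and which contains $\Diag(\xi_3,\dots,\xi_3,1)$. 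Since every matrix in $G'$ fixes the last coordinate, $G'$ acts faithfully on the first six coordinates, and its central subgroup $\langle\Diag(\xi_3,\dots,\xi_3,1)\rangle\cong C_3$ acts there by the scalar $\xi_3$.

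The first step is to pin down $G'$. Its isomorphism type is a $C_3$-covering group of $M_{10}$, i.e. either $C_3\times M_{10}$ or the nonsplit triple cover $3.M_{10}$. The minimal dimension of a faithful complex representation of $M_{10}$ is $9$ (every faithful representation contains a faithful irreducible constituent, since all non-faithful irreducibles have kernel containing $A_6$, and every faithful irreducible of $M_{10}$ has degree $\ge 9$, as one reads off its character table). Hence $M_{10}$ admits no faithful six-dimensional representation, which rules out the split case $C_3\times M_{10}$; therefore $G'\cong 3.M_{10}=G_{X_{12}}$, and the six-dimensional faithful representation $V$ of $G'$ with centre acting by $\xi_3$ is one of the (finitely many) such representations of $3.M_{10}$.

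The main step is then computational: classify up to $3$-equivalence the six-dimensional faithful representations $V$ of $3.M_{10}$ on which the centre acts by $\xi_3$, and for each compute the space $(\mathrm{Sym}^3 V)^{3.M_{10}}$ of invariant cubic forms together with the smooth cubic fourfolds it defines. Carrying this out with GAP/Mathematica/Sage, as in Strategy \ref{strategy43}, I expect to obtain exactly two such smooth cubic fourfolds up to isomorphism: $X_{10}'$ (arising from the representation tied to $\widehat{F_{10}'}=F_{12}$) and a second one, defined by the explicit polynomial $F_{M_{10}}$. This yields $(3)$. The hard part is precisely this invariant-theoretic computation: realizing each relevant representation $V$, determining its invariant cubics, deciding smoothness, and verifying that no cubic fourfold carrying an $M_{10}$-action arises beyond the two listed; it is here that the closed form of $F_{M_{10}}$ is produced.

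Finally, for $(3)\Rightarrow(1)$ I would check that both fourfolds satisfy $\Aut=\Aut^s\cong M_{10}$. For $X_{10}'$ this is recorded in Table \ref{table:symplectic}. For $X_{F_{M_{10}}}$, Lemma \ref{lem:exactseq} applied to $\widehat{F_{M_{10}}}=F_{M_{10}}+x_7^3$ (whose symmetry group is $3.M_{10}$) gives $\Aut(X_{F_{M_{10}}})\cong M_{10}$; as $\Aut^s$ is normal in $\Aut$ with cyclic quotient and the only proper nontrivial normal subgroup of $M_{10}$ is $A_6$, we have $\Aut^s\in\{A_6,M_{10}\}$. To exclude $A_6$ I would use Lemma \ref{lem:symcon}: writing $A(F_{M_{10}})=\lambda F_{M_{10}}$, the assignments $A\mapsto\det A$ and $A\mapsto\lambda$ are homomorphisms $3.M_{10}\to\C^\ast$ trivial on the centre (indeed $\det(\xi_3 I_6)=1$ and $\xi_3 I_6$ fixes $F_{M_{10}}$), hence factor through $M_{10}^{\mathrm{ab}}\cong C_2$ and are trivial on $3.A_6$; thus $\det A=1=\lambda^2$ for $A\in 3.A_6$, and a single determinant computation on an outer generator confirms $\det A=1$ there as well, so every automorphism is symplectic and $\Aut^s=\Aut\cong M_{10}$. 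The ``in particular'' statement then follows by elimination: $X^1(M_{10})\cong X_{10}'$ is known and, by Theorem \ref{thm:LZ}, there are exactly two cubic fourfolds carrying a symplectic $M_{10}$-action, so the smooth fourfold $X_{F_{M_{10}}}$, which the classification shows to be distinct from $X_{10}'$, must be $X^2(M_{10})$.
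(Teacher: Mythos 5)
Your proposal is correct and follows essentially the same route as the paper: reduce via Lemma \ref{lem:(4,3)and(5,3)} to the unique $C_3$-covering $(5,3)$-group $C_3.M_{10}$, classify its $(5,3)$-representations up to $3$-equivalence (the paper finds exactly two, one giving $X_{10}'$ and one giving $F_{M_{10}}$), compute the invariant cubics, and then verify $\Aut=\Aut^s\cong M_{10}$ via Lemma \ref{lem:exactseq}, maximality of $C_3.M_{10}$ in Theorem \ref{thm:Main}, and the determinant criterion of Lemma \ref{lem:symcon}. Your minor variations — ruling out $C_3\times M_{10}$ by the absence of a faithful $6$-dimensional representation of $M_{10}$ rather than by citing Theorem \ref{thm:Main}, and deducing symplecticity from the triviality of $\det$ on $3.A_6$ rather than by direct computation on both generators — are sound but do not change the substance of the argument.
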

	
	\begin{proof}
	 The idea is to compute cubic polynomials preserved by $(5,3)$-representations of $C_3$-covering $(5,3)$-groups of $M_{10}$. Let $F:=F_{M_{10}}$. By Theorem \ref{thm:Main}, $M_{10}$ has only one $C_3$-covering $(5,3)$-group $C_3.M_{10}$. By adapting \textbf{Steps 2} and \textbf{3} of Strategy \ref{strategy}, we compute the set $\mathcal{S}_{C_3.M_{10}}$ of all $(5,3)$-representations up to $3$-equivalence. It turns out that  $\mathcal{S}_{C_3.M_{10}}$ contains exactly $2$ representations of $C_3.M_{10}$. An $F_{12}$-lifting of the automorphism group $\Aut(X_{12})$ of $X_{12}$ in the example (12) in Subsection \ref{mainex} corresponds to one representation, say $\rho_1$, in $\mathcal{S}_{C_3.M_{10}}$. Let $\rho_2$ denote the other one in $\mathcal{S}_{C_3.M_{10}}$. It turns out the image $\rho_2(C_3.M_{10})$ can be generated by two matrices $A_1=\begin{pmatrix}A_1'&0\\0&1\end{pmatrix}$ and $A_2=\begin{pmatrix}A_2'&0\\0&1\end{pmatrix}$, where $A_1'=\Diag(1,-1,\xi_4,-\xi_4,\xi_{8}^{7},-\xi_{8})$ and  
	 $A_2'\in \GL(6,\C)$ (the matrix $A_2'$ is a little complicated and it can be found in the ancillary file {\ttfamily Theorem6.15.txt} to \cite{YYZ23}). Then by computation, the set of the homogeneous polynomials of degree $3$ preserved by both $A_1'$ and $A_2'$ is $\{\lambda F\mid \lambda\in \C\}$. The automorphism group $\Aut(X_{\widehat{F}})$ of the smooth cubic fivefold $X_{\widehat{F}}$ contains $\langle[A_1],[A_2] \rangle\cong C_3.M_{10}$, a maximal $(5,3)$-group by Theorem \ref{thm:Main}. Thus, $\Aut(X_{\widehat{F}})=\langle[A_1],[A_2] \rangle$ and $\Aut(X_{F})=\langle [A_1'],[A_2']\rangle\cong M_{10}$. Note that $A_1'(F)=A_2'(F)=F$, ${\rm ord}([A_1'])={\rm ord}(A_1')=8$, ${\rm ord}([A_2'])={\rm ord}(A_2')=4$, and ${\rm det}(A_1')={\rm det}(A_2')=1$. Then by Lemma \ref{lem:symcon}, we have $[A_1'],[A_2']\in\Aut^s(X_{F})=\Aut(X_F)$. Since $\rho_1$ and $\rho_2$  are not $3$-equivalent, it follows that  $X_{F_{12}}=X_{\widehat{F_{10}'}}$ and $X_{\widehat{F}}$ are not isomorphic, which implies that $X_{10}'$ and $X_F$ are not isomorphic. This completes the proof of the theorem.
	\end{proof}
	
	\begin{remark}\label{rem:A7M10}
	The automorphism groups $\Aut(X_{F_{A_7}})$ and $\Aut(X_{F_{M_{10}}})$ have no $F_{A_7}$-lifting and $F_{M_{10}}$-lifting respectively. 	\end{remark}
	
	\begin{remark}\label{rem:classifysym}
	As in the proof of Theorem \ref{thm:M10}, we can compute all $(5,3)$-representations, up to $3$-equivalence, of any $(5,3)$-groups by adapting Strategy \ref{strategy}. In particular, for any $(4,3)$-group $G$, we can determine whether $G$ can act faithfully and symplectically on smooth cubic fourfolds via computing $(5,3)$-representations of $C_3$-covering $(5,3)$-groups of $G$. In this way, we can prove Theorem \ref{thm:LZ} without using the global Torelli theorem for cubic fourfolds.
	\end{remark}

\appendix
\section{Roles of supplementary files}	\label{Appendix}
In this appendix, we briefly explain the role of each supplementary file.
	\begin{enumerate}
		\item [(1)] {\ttfamily Examples4.1.txt}: This file contains the matrix generators of $G_{X_i}\subset \PGL(7,\C)$ for all $i$ except $i=6$, where $X_i$ are the smooth cubic fivefolds in Subsection \ref{mainex}.
		\item [(2)] {\ttfamily Examples6.1.txt}: This file contains the matrix generators of $G_{X_i'}\subset \PGL(6,\C)$ for all $i$ except $i=6$, where $X_i'$ are the smooth cubic fourfolds in Subsection \ref{ex:4folds}.
	    \item [(3)] {\ttfamily 53-groups.txt}:  This file contains lists of all subgroups of the $20$ groups $\Aut(X_i)$ in Theorem \ref{thm:Main}. We use this file in Remark \ref{rem:Aut=G}, Lemma \ref{lem:ab53}, and the ancillary file {\ttfamily GAPsubtest-53groups.txt}. 
		\item [(4)] {\ttfamily 43-groups.txt}: This file contains lists of all subgroups of the $15$ groups $\Aut(X'_i)$ in Theorem \ref{thm:fourfold}. Such subgroups are used in the ancillary file {\ttfamily GAPsubtest-43groups.txt}.
	    \item [(5)] {\ttfamily Theorem5.12.txt}: This file contains representatives of all $3$-equivalence classes of $(5,3)$-representations of the $19$ abelian groups in Theorem \ref{thm:control}. The $(5,3)$-representations in this file and the next file can be computed by hand in principle (see e.g. \cite[Theorem 5.4]{WY20} and Example 5.7), but we use computer algebra for efficiency. 
	    \item [(6)] {\ttfamily Theorem5.13.txt}:  This file contains representatives of all $3$-equivalence classes of $(5,3)$-representations of cyclic groups of primary orders. We use this file in the proof of Theorem \ref{thm:abelian}.
	    \item [(7)] {\ttfamily Theorem6.15.txt}: This file contains the matrix $A_2'$ in the proof of Theorem \ref{thm:M10}.
	    \item [(8)] {\ttfamily GAPsubtest-53groups.txt}: This file contains the GAP codes used in \textbf{Step 1} of Strategy \ref{strategy}.
	    \item [(9)] {\ttfamily GAPsubtest-43groups.txt}: This file contains the GAP codes used in \textbf{Steps 1} and \textbf{2} of Strategy \ref{strategy43}.
	    \item [(10)] {\ttfamily LHsnewLHsnewred.txt}:  
	    This file contains $(5,3)$-characters and special $(5,3)$-characters of non-trivial abelian $(5,3)$-groups required for  \textbf{Steps 2} and  \textbf{3} of Strategy \ref{strategy} and  \textbf{Step 3} of Strategy \ref{strategy43}.
	    \item [(11)] {\ttfamily GAPcodesnewforMath.txt}:  This file contains the GAP codes used in  \textbf{Steps 2} and  \textbf{3} of Strategy \ref{strategy} and  \textbf{Step 3} of Strategy \ref{strategy43}.
	    \item [(12)] {\ttfamily CubicFivefolds.m}: This file contains the Mathematica codes used in  \textbf{Steps 2} and  \textbf{3} of Strategy \ref{strategy} and  \textbf{Step 3} of Strategy \ref{strategy43}.
	    \item [(13)] {\ttfamily Theorem5.1.txt}: This file contains the computation details (for non-solvable groups) in the proof of Theorem \ref{thm:Main}. The computation uses Strategy \ref{strategy}.
	    \item [(14)] {\ttfamily Theorem5.22.txt}: This file contains the computation details (for solvable groups) in the proof of Theorem \ref{thm:solvable}. The computation uses Strategy \ref{strategy}.
	    \item [(15)] {\ttfamily Theorem6.1.txt}: This file contains the computation details in the proof of Theorem \ref{thm:fourfold}. The computation uses Strategy \ref{strategy43}.
	    \item [(16)] {\ttfamily Example-m96-53-groups.txt}: In this file, we take the case $m = 96$ as an example to illustrate how to classify (non-abelian) solvable $(5,3)$-groups by using Strategy \ref{strategy} with the help of computer algebra (GAP, Sage and Mathematica). We use this file in the proof of Theorem \ref{thm:solvable}.
	    \item [(17)] {\ttfamily Example-m16-43-groups.txt}: In this file, we take the case $m = 16$ as an example to illustrate how to classify solvable $(4,3)$-groups by using Strategy \ref{strategy43} with the help of computer algebra (GAP, Sage and Mathematica). We use this file in the proof of Theorem \ref{thm:fourfold}.
	\end{enumerate}


\begin{thebibliography}{BlEsKe14} 
		
		
\bibitem[Adl78]{Ad78} 
Adler, A.: 
{\em On the automorphism group of a certain cubic threefold}, 
Amer. J. Math. (1978), 1275--1280.

\bibitem[AKPW23]{AKPW23} 
Auel, A., Kulkarni, A., Petok, J., Weinbaum, J.:
{\em A census of cubic fourfolds over $\mathbb{F}_2$}, 
\href{https://arxiv.org/pdf/2306.09908.pdf}{ arXiv:2306.09908}.
		
		
		
\bibitem[BCS16]{BCS16} 
Boissi\`{e}re, S., Camere, C., Sarti, A.: 
{\em Classification of automorphisms on a deformation family of hyperk\"{a}hler fourfolds by p-elementary lattices},  
Kyoto J. Math. 56 (2016), 465--499.
	
		
\bibitem[BCP]{BCP}
Bosma, W., Cannon, J., Playoust, C.: 
{\em The Magma algebra system I: The user language}, 
J. Symb. Comp. 24 (1997), 235--265; 
home page for Magma version 2.23-1 (2017) at http://magma.maths.usyd.edu.au.
		
		
\bibitem[CG72]{CG72} 
Clemens, H.,  Griffiths, P.: 
{\em The intermediate Jacobian of the cubic three-fold},  
Ann. of Math. 95 (1972), 281--356.

		
\bibitem[Dol12]{Do12} 
Dolgachev, I.: 
{\em Classical algebraic geometry. A modern view}. 
Cambridge University Press, Cambridge, (2012).

		
\bibitem[DD19]{DD19} 
Dolgachev, I., Duncan, A.: 
{\em Automorphisms of cubic surfaces in positive characteristic}, 
Izvestiya: Math. 83 (2019), 5--82.

		
\bibitem[Fu16]{Fu16} 
Fu, L.: 
{\em Classification of polarized symplectic automorphisms of Fano varieties of cubic fourfolds}, 
Glasg. Math. J. 58 (2016), 17--37.
		
		
\bibitem[GAP]{GAP} 
The GAP Group: 
{\em GAP--Groups, Algorithms, and Programming, Version 4.12.2},  
(2022), http://www.gap-system.org.
		
\bibitem[GL11]{GL11} 
Gonz\'{a}lez-Aguilera, V., Liendo, A.: 
{\em Automorphisms of prime order of smooth cubic n-folds}, 
Arch. Math. (Basel) 97 (2011), 25--37.

		
\bibitem[GL13]{GL13} 
Gonz\'{a}lez-Aguilera, V., Liendo, A.: 
{\em On the order of an automorphism of a smooth hypersurface}, 
Israel J. Math. 197 (2013), 29--49.
		
		
\bibitem[GLM23]{GLM23} 
Gonz\'{a}lez-Aguilera, V., Liendo, A., Montero, P.: 
{\em On the liftability of the automorphism group of smooth hypersurfaces of the projective space}, 
Israel J. Math. 255 (2023),  283--310.

		
\bibitem[GLMV23]{GLMV23}
Gonz\'{a}lez-Aguilera, V., Liendo, A., Montero, P.,  Villaflor Loyola, R.:
{\em On a Torelli principle for automorphisms of Klein hypersurfaces},
\href{https://arxiv.org/pdf/2212.13308.pdf}{arXiv:2212.13308}.
		
\bibitem[HM19]{HM19} 
H\"{o}hn, G., Mason, G.: 
{\em Finite groups of symplectic automorpshisms of hyperk\"{a}hler manifolds of type $K3^{[2]}$},
Bull. Inst. Math. Acad. Sin. (N.S.) 14 (2019), 189--264.
		
		
\bibitem[Hos97]{Ho97} 
Hosoh, T.: 
{\em Automorphism groups of cubic surfaces}, 
J. Algebra 192  (1997), 651--677. 
		
\bibitem[Huy23]{Huy}
Huybrechts, D.:
{\em The geometry of cubic hypersurfaces}, 
Cambridge Studies in Advanced Mathematics, 206. Cambridge University Press, Cambridge, (2023).		
		
\bibitem[LZ22]{LZ22} 
Laza, R., Zheng, Z.: 
{\em Automorphisms and periods of cubic fourfolds}, 
Math. Z. 300 (2022), 1455--1507.
		
		
\bibitem[MM63]{MM63} 
Matsumura, H., Monsky, P.: 
{\em On the automorphisms of hypersurfaces}, 
J. Math. Kyoto Univ. 3 (1963), 347--361.
		
\bibitem[Mo13]{Mo13}  
Mongardi, G.: 
{\em Automorphisms of hyperk\"{a}hler manifolds}, 
Ph.D. thesis, Roma 2012--2013, \href{https://arxiv.org/pdf/1303.4670.pdf}{arXiv:1303.4670}.
		
\bibitem[Mu88]{Mu88} 
Mukai, S.: 
{\em Finite groups of automorphisms of K3 surfaces and the Mathieu group}, 
Invent. Math. 94 (1988), 183--221.
		
\bibitem[OY19]{OY19} 
Oguiso, K., Yu, X.: 
{\em Automorphism groups of smooth quintic threefolds},  
Asian J. Math. 23 (2019), 201--256.

\bibitem[Pro12]{Pro12} 
Prokhorov, Y.: 
{\em Simple finite subgroups of the Cremona group of rank 3}, 
J. Algebraic Geom. 21 (2012), 563--600.
		
\bibitem[Rou09]{Ro09} 
Roulleau, X.: 
{\em The Fano surface of the Klein cubic threefold}, 
Kyoto J. Math. 49 (2009), 113--129.
		
\bibitem[Sage]{Sage}
The Sage Developers: {\em SageMath, the Sage Mathematics Software System (Version 10.0)}, (2023). 
https://www.sagemath.org.
		
\bibitem[ST71]{ST71} Sebastiani, M., Thom, R.:
{\em Un résultat sur la monodromie}, 
Invent. Math. 13 (1971), 90--96.
		
\bibitem[Seg42]{Se42} 
Segre, B.:  
{\em The non-singular cubic surfaces}, 
Oxford University Press, Oxford, (1942).

		
\bibitem[WY20]{WY20}
Wei, L., Yu, X.: 
{\em Automorphism groups of smooth cubic threefolds}, 
J. Math. Soc. Japan 72 (2020), 1327--1343.
		
\bibitem[Wo]{Wo}
Wolfram Research, 
{\em Mathematica, version 13.2.1.0}, Champaign, (2023).

\bibitem[YYZ23]{YYZ23}
Yang, S., Yu, X., Zhu, Z.:
{\em Automorphism groups of cubic fivefolds and fourfolds},
\href{https://arxiv.org/abs/2308.07186}{	arXiv:2308.07186}.
		
\bibitem[Zhe22]{Zh22} 
Zheng, Z.: 
{\em On abelian automorphism groups of hypersurfaces}, 
Israel J. Math. 247 (2022), 479--498.
		
\end{thebibliography}
\end{document}